\title{Balancing connected colourings of graphs}
\author{
Freddie Illingworth\footnote{Mathematical Institute, University of Oxford,
Oxford OX2\thinspace6GG, United Kingdom}~\footnote{Research supported by EPSRC grant EP/V007327/1} \;
Emil Powierski\protect\footnotemark[1] \;
\\Alex Scott\protect\footnotemark[1]~\footnotemark[2] \;
Youri Tamitegama\protect\footnotemark[1]}
\date{}
\newtheorem{theorem}{Theorem}
\newtheorem{conjecture}[theorem]{Conjecture}
\newtheorem{claim}{Claim}
\newtheorem*{claim*}{Claim}
\newtheorem{lemma}{Lemma}[section]
\newtheorem{question}{Question}
\theoremstyle{definition}
\newtheorem{definition}{Definition}
\newtheorem*{definition*}{Definition}
\numberwithin{equation}{section}
\newcommand{\N}{\mathbb{N}}
\newcommand{\Q}{\mathbb{Q}}
\renewcommand{\leq}{\leqslant} 
\renewcommand{\geq}{\geqslant}
\newcommand\blfootnote[1]{
  \begingroup
  \renewcommand\thefootnote{}\footnote{#1}
  \addtocounter{footnote}{-1}
  \endgroup
}
\begin{document}
\maketitle
\begin{abstract}
We show that the edges of any graph $G$ containing two edge-disjoint spanning trees can be blue/red coloured so that the blue and red graphs are connected and the blue and red degrees at each vertex differ by at most four.
This improves a result of H\"orsch.
We discuss variations of the question for digraphs, infinite graphs and a computational question, and resolve two further questions of H\"{o}rsch in the negative.
\blfootnote{Email: \texttt{\{\href{mailto:illingworth@maths.ox.ac.uk}{illingworth},\href{mailto:powierski@maths.ox.ac.uk}{powierski},\href{mailto:scott@maths.ox.ac.uk}{scott},\href{mailto:tamitegama@maths.ox.ac.uk}{tamitegama}\}@maths.ox.ac.uk}}
\end{abstract}

\section{Introduction}

Finding edge-disjoint spanning trees in a graph has a rich history. 
The seminal result is the independent characterisation by Tutte~\cite{tutte1961problem} and Nash-Williams~\cite{nash1961edge} of the presence of $k$ edge-disjoint spanning trees in a finite graph. Much research has focussed on whether the packed spanning trees can be chosen to satisfy extra properties (for example, see \cite{bang2016complexity,bessy2021fpt,chuzhoy2020packing}). 
It is folklore that the edges of any graph $G$ can be coloured blue and red such that the blue degree and red degree of each vertex differ by at most two. 
The intersection of these two problems asks how well the colour-degrees can be balanced in a blue/red-edge colouring of a graph that contains a \emph{double tree} -- the union of two edge-disjoint spanning trees. Kriesell~\cite{kriesell2011balancing} was the first to consider balancing colour-degrees in a blue/red-edge colouring of a double tree.
Building on his work, H\"{o}rsch~\cite{horsch2021globally} gave the first constant bound when $G$ is a double tree.

\begin{theorem}[H\"{o}rsch~\cite{horsch2021globally}]\label{thm:horsch}
	Let $G$ be a finite double tree. 
	The edges of $G$ may be coloured blue and red such that the blue and red graphs are both spanning trees and the blue and red degrees of each vertex differ by at most five.
\end{theorem}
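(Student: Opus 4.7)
The natural plan is an iterative exchange argument. Initialise the colouring by setting blue $= T_1$ and red $= T_2$; then perform a sequence of \emph{elementary swaps} (exchange the colours of one blue edge and one red edge) in a way that preserves both colour classes being spanning trees. Write $\delta(v) = d_B(v) - d_R(v)$, so $\sum_v \delta(v) = 0$, and let $\Phi = \sum_v \delta(v)^2$. The goal is to show that whenever some $|\delta(v)| \ge 6$ there is a swap that strictly decreases $\Phi$; since $\Phi$ is a non-negative integer, the process terminates at a colouring with $\max_v |\delta(v)| \le 5$, and both colour classes are still spanning trees.

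\textbf{The swap computation.} The enabling tool is Brualdi's strong basis exchange: for any $e \in T_1$ there is some $f \in T_2$ such that both $T_1 - e + f$ and $T_2 - f + e$ are spanning trees; concretely, $f$ must lie on the $T_2$-path between the endpoints of $e$ (the fundamental circuit) and also in the cut of $T_1 - e$ (the fundamental cocircuit). A single such swap, with $e = uv$ and $f = xy$ and $\{u,v\} \cap \{x,y\} = \varnothing$, changes $\delta$ by $-2$ at each of $u,v$ and $+2$ at each of $x,y$, giving
\[
\Delta\Phi \;=\; -4\bigl(\delta(u) + \delta(v) - \delta(x) - \delta(y)\bigr) + 16.
\]
Hence the swap strictly decreases $\Phi$ provided $\delta(u) + \delta(v) - \delta(x) - \delta(y) \ge 5$. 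If some $v_0$ has $\delta(v_0) \ge 6$, we would like to pick $e$ incident to $v_0$ and $f$ whose endpoints have small or negative imbalance. (Cases where $e$ and $f$ share an endpoint cause only minor modifications in the above formula and can be dealt with separately.)

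\textbf{Main obstacle.} Brualdi's theorem supplies \emph{some} exchange partner $f$ for each $e$, but gives no control over the endpoints of $f$. In adversarial configurations, every valid partner $f$ for every blue edge at $v_0$ might land at vertices that are themselves over-blue, so that no single swap strictly decreases $\Phi$. I would deal with this by considering \emph{exchange chains}: sequences of elementary swaps whose cumulative effect transports imbalance from over-blue to over-red vertices along a path in an auxiliary digraph whose arcs encode legal pairwise exchanges. This is in the spirit of augmenting paths for matroid intersection, and uses the well-known connectivity of the exchange graph of bases of a matroid. The slack $5$ in the statement should correspond exactly to how much head-room is needed to guarantee that such an improving chain exists; the tight configurations should involve a vertex $v_0$ whose $T_1$-neighbourhood is tightly coupled (via short $T_2$-paths) to other over-blue vertices, limiting the set of reachable endpoints for $f$.
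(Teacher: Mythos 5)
Your write-up is a plan, not a proof: the one claim that carries all the weight --- that whenever some vertex has $\lvert\delta(v)\rvert \ge 6$ there exists a single improving swap, or failing that an improving exchange chain --- is exactly the step you leave unproved, and you acknowledge as much under ``Main obstacle''. Brualdi/strong basis exchange guarantees a partner $f$ on the fundamental circuit/cocircuit but gives no control over where its endpoints sit, so a single swap need not decrease $\Phi$ (indeed the other endpoint $u$ of your chosen $e$ may have very negative $\delta(u)$, wiping out the gain from $\delta(v_0)\ge 6$). The appeal to exchange chains does not repair this: each elementary swap changes $\delta$ by $\pm 2$ at up to four vertices, so a chain does not simply ``transport'' imbalance from an over-blue vertex to an over-red one --- the intermediate contributions must cancel, which forces the consecutive swaps to share endpoints in a very specific alternating pattern, and the legal partner sets change after every swap, so the auxiliary digraph is not static. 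Connectivity of the matroid base-exchange graph tells you that you can walk between any two double tree decompositions, but says nothing about whether a walk exists along which $\Phi$ is monotone, nor that the head-room needed is exactly $5$; without a lemma of that kind the potential-function argument does not terminate at a $5$-balanced decomposition, and no such lemma is supplied or sketched in a checkable form.

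For comparison, neither H\"orsch's proof nor the improvement in this paper runs a local search on a potential function. Both take a minimal counterexample and kill it by local reductions --- delete a $2$-vertex, or contract around a $3$-vertex (which is forced to be a leaf in one tree), rebalance the smaller double tree by minimality, then reconstruct --- using edge swaps only in the controlled situation of Lemma~\ref{lem:edgeswap}/Lemma~\ref{fact:poor3}, where the swapped partner is forced to be incident to a prescribed leaf; the final contradiction comes from a discharging count showing the minimal counterexample has too many edges. The reason that route works is precisely that it never needs global control over where an exchange partner lands, which is the control your approach requires and does not obtain. If you want to salvage your plan, the missing ingredient is a proven statement of the form ``if the decomposition is not $5$-balanced, then there is an alternating sequence of exchanges, each preserving both spanning trees, whose net effect decreases $\Phi$'', and at present there is no argument for it.
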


Our main result is two-fold. Firstly we reduce the above bound to four.

\begin{theorem}\label{thm:2trees}
Let $G$ be a finite double tree. 
The edges of $G$ may be coloured blue and red such that the blue and red graphs are both spanning trees and the blue and red degrees of each vertex differ by at most four.
\end{theorem}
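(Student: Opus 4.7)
The plan is to start from the colouring guaranteed by Theorem~\ref{thm:horsch}, which already achieves discrepancy at most $5$, and to remove the last source of imbalance by local modifications. Write $\delta(v) = d_B(v) - d_R(v)$ for the signed discrepancy at $v$; since $\delta(v) \equiv d(v) \pmod 2$, the only vertices violating our target are odd-degree vertices with $\delta(v) = \pm 5$. Call these \emph{bad}. The whole task is to eliminate bad vertices while preserving the property that the blue and red graphs $T_B, T_R$ are spanning trees.

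The basic move is a \emph{tree swap}: for any blue edge $e = uv$, removing $e$ splits $T_B$ into two components $(A,B)$, and since $T_R$ is spanning, some red edge $f=xy$ crosses the cut. Recolouring $e$ red and $f$ blue yields two new spanning trees; the effect is $\delta(u), \delta(v) \mapsto \delta-2$ and $\delta(x), \delta(y)\mapsto \delta+2$. Red-to-blue swaps behave symmetrically. Thus to fix a bad vertex $v$ with $\delta(v)=+5$ one picks a blue edge $e=vu$ and a red edge $f=xy$ across the induced cut, and the swap is \emph{safe} when $\delta(u) \ge -3$ and $\delta(x), \delta(y) \le 3$. The key claim to establish is that, possibly after some preparatory swaps, a safe swap is always available.

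To prove this I would introduce an \emph{augmenting walk} rooted at $v$. If the naive swap would create a new bad vertex $w$ (by pushing some $\delta(x)$ from $+3$ to $+5$, say), we declare $w$ the next cursor, undo nothing, and look for a swap that would reduce $\delta(w)$. Continuing, we obtain a sequence of swaps whose cumulative effect is to correct $\delta$ by $-2$ only at the start vertex and at the final cursor (intermediate cursors see net change $0$). The walk terminates successfully as soon as the next swap hits either (i) a vertex with $|\delta| \le 1$, decreasing the number of bad vertices by one, or (ii) a bad vertex of opposite sign, decreasing it by two. A sign count --- $\sum_v \delta(v) = 0$ --- guarantees that an opposite-sign vertex exists whenever any bad vertex does, and the cut structure forces each step of the walk to move genuinely further from the starting vertex in $T_B$ (or $T_R$).

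The main obstacle is therefore terminating this walk without looping. To control it I would set up a potential such as
\[
\Phi = \sum_{v} \max\bigl(|\delta(v)| - 3,\, 0\bigr),
\]
together with a secondary lexicographic quantity measuring the sizes of cuts used so far, and show that every step of the walk either reduces $\Phi$ outright or strictly reduces the secondary quantity; a Nash-Williams/Tutte style argument on the double tree $G$ then bounds the number of admissible swaps crossing any given cut, ruling out infinite walks. The subtle point is that each tree swap changes the cuts available to the next swap, so one must verify that the "next" red edge required by the walk still exists inside the \emph{current} $T_B, T_R$; this is where I anticipate the technical work, and I would handle it by choosing the initial edge $e$ at $v$ so as to minimise the size of the cut $(A,B)$, forcing the walk to migrate through a decreasing chain of subtrees.
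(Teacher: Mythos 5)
The core of your argument is not there yet, and two details of the basic move are already off. First, for the recolouring to leave \emph{both} graphs spanning trees it is not enough that the red edge $f$ crosses the blue cut determined by deleting $e=uv$ from $T_B$: you also need $f$ to lie on the unique red cycle created by adding $e$, i.e.\ on the red $u$--$v$ path (the paper's ``swapping'' requires both conditions simultaneously). Otherwise $T_R-f+e$ need not be a tree. This shrinks the set of admissible $f$ considerably and weakens every later claim that a suitable $f$ is ``always available''. Second, your safety thresholds are wrong for a target discrepancy of four: recolouring $e=vu$ lowers $\delta(u)$ by $2$, so a vertex with $\delta(u)=-3$ becomes $-5$, and pushing $\delta(x)$ from $+3$ gives $+5$; ``safe'' must mean $\delta(u)\ge -2$ and $\delta(x),\delta(y)\le 2$. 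These are repairable, but they feed directly into the genuine gap.

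The genuine gap is that you never establish the two claims on which everything rests: that from an arbitrary $5$-balanced double tree decomposition a safe swap (or a successful augmenting walk) exists, and that the walk terminates. Each swap changes $\delta$ at four vertices and changes both trees, so ``each step moves genuinely further from the starting vertex in $T_B$'' is not even well defined across steps; the potential $\Phi$ is not shown to decrease, and the ``secondary lexicographic quantity'' and the ``Nash--Williams/Tutte style argument'' bounding admissible swaps are named but never constructed. The sign count $\sum_v\delta(v)=0$ only gives a vertex of negative discrepancy somewhere in the graph; it says nothing about reaching it by a chain of admissible swaps inside the current trees. You flag this yourself (``this is where I anticipate the technical work''), but that is exactly where the whole difficulty of the theorem sits, and it is not clear such a local-improvement scheme can be pushed through at all. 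The paper takes a different route entirely: a minimal counterexample, structural lemmas about $2$-vertices, poor and bad $3$-vertices and critical vertices obtained by a reduce--rebalance--reconstruct template together with edge swaps, and then a discharging argument with $c=4$ showing the counterexample has too many edges. To make your approach work you would need an existence-and-termination argument for the augmenting walks of comparable substance, which at present is absent.
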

Moreover, we obtain the same bound in the more general case where $G$ is any graph containing a spanning double tree.

\begin{theorem}\label{thm:2treesgeneral}
Let $G$ be a finite graph containing a spanning double tree. 
The edges of $G$ may be coloured blue and red such that the blue and red graphs both contain spanning trees and the blue and red degrees of each vertex differ by at most four.
\end{theorem}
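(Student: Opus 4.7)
The plan is to reduce Theorem~\ref{thm:2treesgeneral} to Theorem~\ref{thm:2trees}. Fix any spanning double tree $H \subseteq G$ and set $E' := E(G) \setminus E(H)$. Applying Theorem~\ref{thm:2trees} to $H$ produces a partition $E(H) = B_0 \sqcup R_0$ into spanning trees with $|f_H(v)| \le 4$ at every vertex, where $f_H(v) := d_{B_0}(v) - d_{R_0}(v)$. Since $B_0$ and $R_0$ are already spanning trees of $G$, any extension of the colouring to $E'$ preserves the spanning-tree condition in each colour class. So it remains to two-colour $E'$ in such a way that $|f_H(v) + f_{E'}(v)| \le 4$ for every $v$, where $f_{E'}(v)$ denotes the corresponding blue-minus-red degree in $E'$.

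The classical Eulerian argument---take an Eulerian circuit of $E'$ augmented by a matching on its odd-degree vertices, then alternate colours along the circuit---yields a colouring of $E'$ with $|f_{E'}(v)| \le 1$ for every $v$. Combined with the previous step this already gives $|f(v)| \le 5$, so the only remaining issue is at \emph{tight} vertices: those $v$ with $|f_H(v)| = 4$ and $d_{E'}(v)$ odd, where $f_{E'}(v) \in \{\pm 1\}$ and only the sign opposite to $f_H(v)$ is admissible. The problem reduces to constructing a two-colouring of $E'$ with this prescribed sign pattern at the tight vertices.

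I would produce such a sign-controlled colouring by a $T$-join style swap: starting from an arbitrary Eulerian colouring of $E'$, for each tight vertex $v$ with the wrong sign, find an alternating path in $E'$ from $v$ to another odd-degree vertex $w$ whose sign flip is admissible, and flip the colours along the path; this exchanges $f_{E'}(v)$ and $f_{E'}(w)$ while leaving every other $f_{E'}$ value intact. Pairing tight vertices in $V^+ := \{v : f_H(v) = +4\}$ with tight vertices in $V^-$, or with ``slack'' odd-degree vertices where $|f_H(v)| \le 3$ and hence no sign is forced, should resolve all conflicts. The main obstacle is a global parity condition: if the numbers of odd-degree vertices in $V^+$, $V^-$ and the available slack vertices in $E'$ cannot be reconciled into compatible pairs, no sequence of swaps within $E'$ will suffice. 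In that case I would instead modify the colouring of $H$ itself by tree-exchange operations on edges of $B_0 \triangle R_0$---each such exchange shifts $f_H$ by $\pm 2$ at the two endpoints of the swapped pair and can convert a tight vertex into a slack one---and then apply the $T$-join argument. Showing that this rebalancing always allows the required sign pattern to be realised is the delicate combinatorial heart of the proof.
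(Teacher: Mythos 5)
There is a genuine gap, and it sits exactly where you place it yourself: the ``delicate combinatorial heart'' is not an optional refinement but the entire content of the theorem, and your sketch gives no argument for it. The first two steps (apply Theorem~\ref{thm:2trees} to a spanning double tree $H$, then colour $E' = E(G)\setminus E(H)$ by the Eulerian trick) only give the trivial bound $5$, and the repair step cannot in general be carried out inside $E'$ alone: if, say, $E'$ consists of a single edge $uv$ and the decomposition handed to you by Theorem~\ref{thm:2trees} happens to have $f_H(u)=f_H(v)=+4$, then both colourings of $uv$ push one endpoint to imbalance $5$, and there are no alternating paths or slack vertices in $E'$ to swap along. So you are forced into the second, much harder mode: re-choosing the double tree decomposition of $H$ in coordination with the colouring of $E'$, subject to both colour classes remaining spanning trees and no other vertex being pushed past $4$. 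Note also that a tree exchange swaps an edge of $B_0$ with an edge of $R_0$ and so changes $f_H$ by $\pm2$ at up to four vertices, not two, which makes the bookkeeping you would need even less routine. Nothing in your proposal shows this global rebalancing is always possible, and proving it is essentially equivalent to proving the theorem.

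The paper does not attempt this ``colour the double tree first, then extend'' strategy at all. It proves the stronger Theorem~\ref{claim:general}: for $G = A + M$ there is a $4$-balanced decomposition in which the restriction to $A$ is a double tree decomposition, arguing by a minimal counterexample that minimises $e(M)$ first and $|G|$ second. The extra edges are thus carried through the induction from the start, and structural lemmas (Lemmas~\ref{lem:Mparity}, \ref{lem:Mmatching}, \ref{lem:M2verts}, \ref{lem:Mcrit}) force $M$ to be a matching whose endpoints are big vertices of odd degree, not adjacent to $2$-vertices, and (if critical) not adjacent to poor $3$-vertices; a modified discharging argument then finishes. In other words, the interaction between the tree decomposition and the surplus edges is handled globally inside the induction, precisely because a post hoc repair of a fixed decomposition of $H$, which is what you propose, has no evident proof.
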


H\"{o}rsch asked whether Theorem~\ref{thm:horsch} can be extended to infinite graphs. 
The Tutte-Nash-Williams characterisation does not hold for infinite graphs: Oxley~\cite{oxley1979packing} gave countable locally finite graphs satisfying the characterisation but not containing $k$ edge-disjoint spanning trees.
However, Tutte proved that the characterisation is still valid for countable graphs if one asks for the edge-disjoint subgraphs to be \emph{semiconnected}: a subgraph $H \subset G$ is semiconnected if it contains an edge of every finite cut of $G$. 
We give an extension of this form of Theorem~\ref{thm:2treesgeneral} to countably infinite graphs.

\begin{theorem}\label{cor:infinite}
Let $G$ be a countably infinite graph containing a spanning double tree. 
The edges of $G$ may be coloured blue and red such that the blue and red graphs are both semiconnected and the blue and red degrees of each vertex differ by at most four \textnormal{(}or are both infinite\textnormal{)}.
Further, if $G$ is a double tree, the blue and red graphs may be chosen to be acyclic.
\end{theorem}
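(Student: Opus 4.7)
The plan is a compactness argument that reduces Theorem~\ref{cor:infinite} to the finite Theorem~\ref{thm:2treesgeneral}. Fix a spanning double tree $T_1 \cup T_2 \subseteq G$. I would first handle the case that $G$ is locally finite, and then reduce the general case to it.

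\emph{Locally finite case.} Enumerate $V(G) = \{v_1, v_2, \dots\}$ and set $V_n = \{v_1, \dots, v_n\}$. On $V(G) \setminus V_n$ declare $u \sim_n w$ iff $u$ and $w$ lie in the same component of $T_1 - V_n$ \emph{and} the same component of $T_2 - V_n$; leave each vertex of $V_n$ as its own class. Let $G_n$ be the quotient multigraph $G/{\sim_n}$, keeping parallel edges but discarding loops. Because $G$ is locally finite, each $T_i$ has only finitely many edges between $V_n$ and its complement, so $T_i - V_n$ has finitely many components meeting $V_n$, and hence $G_n$ is a finite multigraph. Every $\sim_n$-class is connected in both $T_1$ and $T_2$, so $T_1/{\sim_n}$ and $T_2/{\sim_n}$ are edge-disjoint spanning trees of $G_n$; applying Theorem~\ref{thm:2treesgeneral} to $G_n$ yields a colouring $c_n$ of $E(G_n)$ whose two colour subgraphs each contain a spanning tree and whose colour-degrees differ by at most four at every vertex.

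Each edge of $G$ eventually lies in every $G_n$, so a standard diagonal extraction produces a subsequence along which $c_n$ stabilises pointwise to a colouring $c$ of $E(G)$. At any vertex $v$, once $v \in V_n$ all incident edges are edges of $G_n$ carrying their eventual colours, so the balanced-degree bound transfers to $c$. For semiconnectedness, let $F$ be any finite cut of $G$ separating $S$ from $V(G) \setminus S$. Once $n$ is large enough that the endpoints of $F$ all lie in $V_n$, no $\sim_n$-class can contain both an $S$- and a $(V \setminus S)$-vertex: otherwise it would yield a $T_1$- or $T_2$-path from $S$ to $V(G) \setminus S$ avoiding $V_n$, which must cross $F$, contradicting that $F$'s endpoints lie in $V_n$. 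Hence $F$ is a cut of $G_n$, so both colours meet $F$ in $c_n$ and therefore in $c$. The acyclic statement for double trees is immediate: any finite monochromatic cycle in $c$ would already appear in some $G_n$, contradicting that the monochromatic subgraphs there are trees.

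\emph{General case.} For each vertex $v$ of infinite degree, I would replace $v$ by a sequence $v^{(1)}, v^{(2)}, \dots$ of new vertices, distributing the edges originally at $v$ so that each $v^{(i)}$ receives a bounded number of them (with bounded numbers of original $T_1$- and $T_2$-edges separately), and linking the $v^{(i)}$ by two edge-disjoint paths to be absorbed into $T_1$ and $T_2$. The modified graph is locally finite and still contains a spanning double tree, so the previous argument applies; the resulting colouring pulls back to one on $G$ in which $v$ inherits infinitely many edges of each colour, while balanced degrees at the remaining (finite-degree) vertices and semiconnectedness transfer back because finite cuts of $G$ correspond to finite cuts of the split graph. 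The main obstacle is precisely this reduction: arranging the split so that both tree structures extend to edge-disjoint spanning trees, handling (or avoiding) parallel edges, and checking that the slack in the inequality $|d_B(v^{(i)}) - d_R(v^{(i)})| \leq 4$ is large enough to guarantee an original edge of each colour at each $v^{(i)}$, so that the pulled-back $d_B(v)$ and $d_R(v)$ are both infinite.
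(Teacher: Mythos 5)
Your overall strategy is the same as the paper's (finite truncations obtained by collapsing everything outside $V_n$, the finite theorem, compactness, plus a vertex-splitting reduction to local finiteness), but two steps do not work as written. First, in the locally finite case your claim that $T_1/{\sim_n}$ and $T_2/{\sim_n}$ are spanning \emph{trees} of $G_n$ is false: a $\sim_n$-class is the intersection of a component of $T_1-V_n$ with a component of $T_2-V_n$, and such an intersection need not be connected in either tree. For example, with $V_n=\{z\}$, $E(T_1)=\{za,ab,bc\}$ and $E(T_2)=\{zb,zc,ca\}$, the classes are $\{a,c\}$ and $\{b\}$, and the $T_1$-edges $ab,bc$ become a parallel pair in the quotient. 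This slip is harmless for the first conclusion, since the quotients are still edge-disjoint connected spanning subgraphs and Theorem~\ref{thm:2treesgeneral} only needs $G_n$ to contain a spanning double tree; your balance transfer and cut argument then go through. But it breaks the acyclicity clause: when $G$ is a double tree your $G_n$ need not be one, you can only invoke Theorem~\ref{thm:2treesgeneral}, and then ``the monochromatic subgraphs there are trees'' is unjustified -- a monochromatic cycle inside $V_n$ is perfectly compatible with each colour merely \emph{containing} a spanning tree. The repair is what the paper does: every cycle created by the contraction must pass through a contracted (auxiliary) vertex, so one can delete edges incident to auxiliary vertices to prune $G_n$ to an honest double tree $H_n$ with $H_n[V_n]=G[V_n]$ and apply Theorem~\ref{thm:2trees}; only then are the finite colour classes forests and the limit graphs acyclic.

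Second, the reduction to local finiteness is left as an acknowledged obstacle, yet it carries the entire content of the ``(or are both infinite)'' clause of Theorem~\ref{cor:infinite}, so as it stands the proposal does not prove that part either. The paper's concrete splitting settles exactly the checks you postpone: replace an infinite-degree vertex $v$ by a one-way infinite path of \emph{double} edges $v_0v_1v_2\dotsc$ and attach exactly $c+1=5$ of the original edges to each $v_i$. This keeps the graph a double tree, and in any acyclic $c$-balanced decomposition each parallel pair must be bichromatic, so the path edges contribute equally to both colours at $v_i$; if the five original edges at $v_i$ were all one colour the imbalance would be $c+1>c$. Hence every $v_i$ retains an original edge of each colour, and after merging, $v$ has infinite degree in both colours. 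Note that your formulation with ``two edge-disjoint paths'' does not obviously force this: nothing compels single linking edges to split evenly between the colours at $v^{(i)}$, so the double-edge (parallel) structure, or something playing the same role, is genuinely needed for the quantitative check you flag.
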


H\"orsch asked whether digraphs can be balanced with the role of trees being played by arborescences: rooted trees where all edges are directed away from the root.

\begin{question}\label{qu:digraphs}
Is there a constant $C$ such that the following holds for every digraph $D$ that is the union of two arc-disjoint arborescences? The edges of $D$ can be coloured blue and red such that the blue and red graphs are both arborescences and the blue and red out-degrees of each vertex differ by at most \(C\)?
\end{question}
We provide an infinite family of counterexamples that need $C \geqslant |V(D)| - 2$, answering H\"orsch's question in the negative.
We further show that the natural analogue where the role of trees is played by strongly connected digraphs is also false.

H\"orsch also asked an algorithmic question.
\begin{question}\label{conj:algo}
Does there exist a polynomial time algorithm to decide if a given Eulerian double tree has a perfectly balanced double tree decomposition?
\end{question}
We answer in the negative by reducing the NP-complete problem (\textit{cf.} P\'eroche~\cite{peroche1984npcompleteness}) of finding two edge-disjoint Hamiltonian cycles in a $4$-regular graph to this decision problem.

The paper is organised as follows.
In Section~\ref{sec:main} we give the proof of Theorem~\ref{thm:2trees} and the main tools used to prove Theorem~\ref{thm:2treesgeneral};
in Section~\ref{sec:general} we conclude the proof of Theorem~\ref{thm:2treesgeneral};
in Section~\ref{sec:semiconn} we discuss the infinite case and prove Theorem~\ref{cor:infinite};
in Section~\ref{sec:digraphs} we describe our constructions for digraphs;
in Section~\ref{sec:NP} we answer Question~\ref{conj:algo}. Finally, in Section~\ref{sec:conc}, we conclude with some natural questions of our own.

\subsection{Notation}
We use standard notation.
Throughout we consider all graphs to be multigraphs without self-loops.
For a graph $G = (V,E)$ with vertices $V$ and edges $E$ we write $e(G)$ for $|E|$ and $|G|$ for $|V|$.
Given a partition $A\sqcup B=V$ of the vertices of \(G\), we write $E(A,B)$ for the set of edges in $E$ with endpoints in both $A$ and $B$, and $e(A,B)= |E(A,B)|$.
If $A\subset V$ we write $G[A]$ to denote the subgraph induced by $A$.
When $X$ is a set of vertices (respectively edges) and $x$ is a vertex (respectively edge), we use the shorthand $X + x$ and $X - x$ to mean $X\cup \{x\}$ and $X\setminus \{x\}$.
For a graph $G$, a vertex $v$ and an edge $e$ we write $G-v$ for the graph obtained by deleting $v$ and all edges incident with it and $G-e$ for the graph obtained by deleting $e$. If $e \notin E(G)$, $G+e$ denotes the graph obtained by adding $e$ to $E(G)$.

We use \(\sqcup\) to denote a disjoint union.
Throughout we write $G = S_1\sqcup\dotsb\sqcup S_k$ to mean that $S_1,\dotsc,S_k$ are spanning subgraphs of $G$ and $E(G)= E(S_1)\sqcup \dotsb\sqcup E(S_k)$.
We sometimes refer to this as a \emph{decomposition} of $G$.
If $S_1,\dotsc,S_k$ are trees, we refer to it as a \emph{$k$-tree decomposition}; if further $k=2$, a \emph{double tree decomposition}.
\begin{definition*}
Let $G$ be a graph, $c$ an integer, and suppose $G = S_1 \sqcup\dotsb \sqcup S_k$.
A vertex $v\in V(G)$ is said to be \emph{$c$-balanced} in $S_1\sqcup \dotsb\sqcup S_k$ if for all $i,j$,
\begin{align*} 
|d_{S_i} (v) - d_{S_j} (v) | \leq c.
\end{align*}
We say that the decomposition $G = S_1 \sqcup \dotsb \sqcup S_k$ is \emph{$c$-balanced} if every $v\in V(G)$ is $c$-balanced in it. 
\end{definition*}
When the constant $c$ is clear, we write \emph{balanced} for brevity. Note, for example, that Theorem~\ref{thm:2trees} can be phrased as `every finite double tree admits a $4$-balanced double tree decomposition'.

\section{Balancing double trees}\label{sec:main}

Fix an integer $c\geq 2$ and suppose there are double trees with no \(c\)-balanced double tree decomposition. Throughout this section, we take \(G\) to be such a double tree with \(\lvert G\rvert\) minimal.

Call a vertex $v\in V(G)$ \textit{small} if $d_G(v) \leq c+2$ and \textit{big} otherwise.
A simple observation that will be used throughout the section is that small vertices are balanced in any double tree decomposition.
We call a vertex $v\in V(G)$ an \mbox{\textit{$\ell$-vertex}} if $d_G(v) = \ell$.

In Sections \ref{subsec:2verts}-\ref{subsec:crit} we show that a minimal counterexample $G$ satisfies a collection of structural properties.
Finally, in Section~\ref{subsec:discharge} we make a discharging argument with $c=4$ to conclude that $G$ has too many edges for a double tree.

Our arguments to show that $G$ cannot contain certain substructures have the following template.
\begin{enumerate}[noitemsep]
\item Locally modify $G$ to create a double tree $H$ with $|H|<|G|$.
\item Use minimality to find a balanced decomposition for $H$.
\item From this decomposition recover a decomposition for $G$.
\item Argue that this decomposition is balanced.
\end{enumerate}
Step 1 is referred to as the \emph{reduction step}, step 3 as the \emph{reconstruction step}.
In step 4, we need only show that the vertices involved in the reduction and reconstruction steps are balanced, as all other vertices are balanced in step 2 and left untouched afterwards.

Our methods refine those of H\"{o}rsch~\cite{horsch2021globally}. 
There are two main novel ideas. The first is using edge swaps to control the structure around certain 3-vertices (\emph{cf.}~Lemma~\ref{fact:poor3}). This is used to force blue and red degrees above 1 and so aid balancedness.
The second is controlling the parity of the degrees of the neighbours of 2-vertices.
These ideas are crucial to most of our structural lemmas.

\textit{In figures, big vertices are black, small vertices are white. 
When the status of a vertex is unclear, we indicate it in grey.
As a convention, when a graph has a double tree decomposition with trees labelled by 1 and 2, we use blue for tree 1, red for tree 2 and black when the colour is irrelevant.}

\subsection{\texorpdfstring{$2$}{2}-vertices}\label{subsec:2verts}
Let $v$ be a $2$-vertex and $x,y$ its (not necessarily distinct) neighbours. 
As $v$ is a leaf in both trees of any double tree decomposition of $G$, removing it yields a double tree $H$, which admits a balanced decomposition by minimality of $G$.
We refer to this as the \emph{standard reduction for $2$-vertices}.
This reduction can be reversed in the obvious way by adding back $v$ and the edges incident to it.
\begin{figure}[ht]
	\centering
	\begin{subfigure}{0.4\textwidth}
    \centering
	\begin{tikzpicture}
	[scale=.5,auto=center,circ/.style={circle,draw, minimum size=1pt, inner sep=2.5pt, fill=gray}, >=stealth]
	\begin{scope}
    	\node[circ, label={180:\small $x$}] (v1) at (-1.5,0) {};
    	\node[circ, label={360:\small $y$}] (v2) at (1.5,0) {};
    	\node[circ, fill=white, label={\small $v$}] (w) at (0,2) {};
    	\begin{scope}[every edge/.style={very thick,draw=red}]
    	\draw (v2) edge (w);
    	\end{scope}
    	\begin{scope}[every edge/.style={very thick,draw=blue}]
    	\draw (v1) edge (w);
    	\end{scope}
	\end{scope}
	\end{tikzpicture}
	\caption{Start configuration.} 
	\label{fig:2}
	\end{subfigure}
	\begin{subfigure}{0.4\textwidth}
    \centering
	\begin{tikzpicture}
	[scale=.5,auto=center,circ/.style={circle,draw, minimum size=1pt, inner sep=2.5pt, fill=gray}, >=stealth]
	\begin{scope}
    	\node[circ, label={180:\small $x$}] (v1) at (-1.5,0) {};
    	\node[circ, label={360:\small $y$}] (v2) at (1.5,0) {};
    	\node[minimum size=1pt, inner sep=2.5pt, label={ }, fill=white] (w) at (0,2) {};
	\end{scope}
	\end{tikzpicture}
	\caption{Reduced configuration.} 
	\end{subfigure}
	\caption{Standard reduction for $2$-vertices.}
\end{figure}
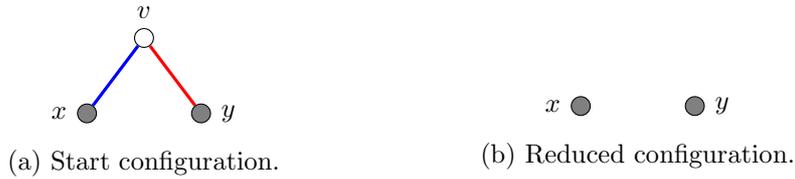
\begin{lemma}\label{lem:2verts}
Let $v\in V(G)$ be a $2$-vertex. 
Then the neighbours $x,y$ of $v$ are distinct, big and $d_G(x) \equiv d_G(y) \equiv c + 1 \bmod{2}$.
\end{lemma}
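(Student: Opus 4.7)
The plan is to apply the standard reduction for $2$-vertices throughout: delete $v$, take a $c$-balanced decomposition $H = T_1 \sqcup T_2$ of $H := G - v$ (which exists by minimality of $G$), and analyse the two options for reassigning the deleted edges $xv, yv$ to the trees. Since each tree must span $v$, one reinstated edge lies in $T_1$ and the other in $T_2$, giving exactly two options.

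For $x \neq y$: if $v$ were joined to a single vertex $x$ by two parallel edges, placing one in each $T_i$ gives $v$ degree $1$ in each colour and increases both colour-degrees of $x$ by $1$, leaving the imbalance at $x$ unchanged. The resulting decomposition of $G$ would then be $c$-balanced, contradicting the minimality of $G$.

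For $x, y$ both big: suppose $x$ is small, so $d_G(x) \leq c + 2$ forces both colour-degrees of $x$ to lie in $[1, c+1]$ in any double tree decomposition of $G$, so $x$ is automatically $c$-balanced. I would then give $yv$ the smaller-degree colour at $y$ in $H$, which changes $y$'s colour-degree difference by at most $1$ in the helpful direction and so keeps $y$ balanced (using $c \geq 2$), while $v$ is trivially balanced. Contradiction.

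The main step is the parity argument. Assume $x \neq y$ are both big, and let $a = d_{T_1}(x) - d_{T_2}(x)$ and $b = d_{T_1}(y) - d_{T_2}(y)$, both in $[-c, c]$. The two reinstatement options change $(a, b)$ to $(a+1, b-1)$ or $(a-1, b+1)$ respectively, and since $G$ is a counterexample both resulting pairs must leave $[-c, c]^2$. The key observation, and the only real obstacle, is that $(a, b) \mapsto (a \pm 1, b \mp 1)$ fails to land in $[-c, c]^2$ for \emph{both} sign choices only when $a$ and $b$ saturate the same endpoint of $[-c, c]$; this forces $(a, b) = (c, c)$ or $(-c, -c)$. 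Since $a \equiv d_{T_1}(x) + d_{T_2}(x) = d_G(x) - 1 \pmod{2}$, $a = \pm c$ gives $d_G(x) \equiv c + 1 \pmod{2}$, and symmetrically $d_G(y) \equiv c + 1 \pmod{2}$.
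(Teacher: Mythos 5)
Your proposal is correct and follows essentially the same route as the paper: the standard reduction for $2$-vertices plus minimality, the observation that small vertices are automatically balanced, and the parity relation $d_{T_1}(x)-d_{T_2}(x)\equiv d_G(x)-1 \pmod 2$. The only (cosmetic) difference is in the last step: the paper assumes the wrong parity, deduces $|d_{S_1}(x)-d_{S_2}(x)|\leq c-1$ and exhibits a good reinsertion, whereas you show directly that both reinsertions can fail only when the signed differences at $x$ and $y$ equal $(c,c)$ or $(-c,-c)$ and then read off both parities at once -- a valid and slightly more symmetric packaging of the same argument.
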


\begin{proof}
Let \(vx, vy\) be the edges incident to \(v\). If \(x = y\), then the standard reduction for \mbox{$2$-vertices} immediately gives a balanced decomposition for $G$, a contradiction.

Suppose \(x\) is small. Apply the standard reduction for $2$-vertices to obtain a double tree $H$ with a balanced blue/red double tree decomposition.
We may then add $v$ back by giving $vy$ a colour in which the degree of $y$ was smallest, and $vx$ the other colour.
Since $x$ is small, this yields a balanced decomposition for $G$, a contradiction.

Suppose for contradiction that $d_G(x)\equiv c \bmod{2}$.
Apply the standard reduction for \mbox{$2$-vertices} to $v$ to obtain a double tree $H$ with balanced decomposition $H= S_1\sqcup S_2$.
In particular, $| d_{S_1}(x) - d_{S_2}(x) | \leq c$. 
By the congruence condition,
\begin{equation*}
|d_{S_1}(x) - d_{S_2}(x) | \equiv 
d_{S_1}(x) + d_{S_2}(x)  = d_G(x) -1 \equiv c-1 \bmod{2},
\end{equation*}
so in fact, $|d_{S_1}(x) - d_{S_2}(x)| \leq c-1$. By symmetry we may assume that \(d_{S_1}(y) \geq d_{S_2}(y)\). Put \(v\) back, adding \(vx\) to \(S_1\) and \(vy\) to \(S_2\). Then \(y\) is still balanced and the degree difference at \(x\) has increased by at most \(1\), so is at most \(c\). This is a contradiction.
\end{proof}

The following observation appeared in H\"orsch~\cite[Lemma 2]{horsch2021globally}.

\begin{lemma}\label{lem:atmostone2vert}
Every big vertex $v\in V(G)$ is adjacent to at most one $2$-vertex. 
\end{lemma}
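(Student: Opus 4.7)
My plan is to argue by contradiction. Suppose a big vertex $v$ is adjacent to two distinct $2$-vertices $u_1, u_2$, and let $w_i$ be the other neighbour of $u_i$. Lemma~\ref{lem:2verts} tells us $w_1, w_2$ are big and $d_G(v), d_G(w_1), d_G(w_2)$ are all congruent to $c+1$ modulo $2$. I would first apply the standard $2$-vertex reduction to both $u_1$ and $u_2$ simultaneously: since each is a leaf in both trees of any double tree decomposition of $G$, the graph $H := G - u_1 - u_2$ is a double tree on $|G| - 2$ vertices, so by the minimality of $G$ it admits a balanced decomposition $H = S_1 \sqcup S_2$.

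Write $\delta_v := d_{S_1}(v) - d_{S_2}(v)$ and $\delta_i := d_{S_1}(w_i) - d_{S_2}(w_i)$ in $H$. The parities $d_H(v) \equiv c+1$ and $d_H(w_i) \equiv c \pmod 2$, combined with the balance $|\delta| \leq c$, give $|\delta_v| \leq c-1$ (strictly, by parity) and $|\delta_i| \leq c$. Next I would reinsert $u_1, u_2$; each must be a leaf in both trees, so $u_iv$ and $u_iw_i$ receive distinct colours. Parametrise by $\epsilon_i = +1$ if $u_iv \in S_1$ and $-1$ otherwise; reinsertion shifts $\delta_v$ by $\epsilon_1 + \epsilon_2$ and each $\delta_i$ by $-\epsilon_i$. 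A short case analysis over $(\epsilon_1, \epsilon_2) \in \{\pm 1\}^2$ shows that some choice restores balance, \emph{unless} (up to interchanging the two colours) $\delta_1 = \delta_2 = c$ and $\delta_v = c-1$: then the constraints $|\delta_i - \epsilon_i| \leq c$ force $\epsilon_1 = \epsilon_2 = +1$, and $\delta_v$ becomes $c+1$. I will call this the bad case.

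The hard part will be escaping the bad case, where $v, w_1, w_2$ are all maximally colour-$1$ heavy for their parity. Since $|\delta_v| \leq c - 1$ is strict, $v$ has at least one $S_2$-edge $e = vy$ in $H$; considering the fundamental cycle of $e$ in $S_1$, which consists of $e$ together with the unique $S_1$-path $P$ from $v$ to $y$, I would apply the symmetric exchange property of the graphic matroid to swap $e$ into $S_1$ in tandem with some $S_1$-edge $e'$ on $P$ into $S_2$. Choosing $e'$ incident to none of $v, w_1, w_2$ would decrease $\delta_v$ by $2$ while leaving $\delta_1, \delta_2$ untouched, yielding a new balanced decomposition of $H$ from which the reinsertion succeeds with $\epsilon_1 = \epsilon_2 = +1$.

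Verifying that such a swap partner always exists is the main technical obstacle: one needs to rule out that every valid exchange partner on $P$ lies at $w_1$ or $w_2$ (or that $P$ has length one, in which case a multi-edge argument is needed). I expect this to be handled by analysing the $S_1$-cut structure through $v$ and exploiting the fact that $w_1, w_2$ already have maximal colour-$1$ imbalance, which restricts where the $S_2$-edges incident to them can go; in the worst case one appeals again to minimality on a slightly altered reduction (for instance, removing only $u_1$ and swapping $u_2$'s edges). Once this verification is in place, the rest of the argument is routine.
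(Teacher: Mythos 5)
Your reduction and the parity bookkeeping are fine, and you correctly isolate the only problematic situation ($\delta_1=\delta_2=c$, $\delta_v=c-1$ up to swapping colours). But the proof is incomplete exactly at its crux: the escape from that bad case is not established. Two concrete problems. First, the swap you describe is oriented the wrong way: in the bad case $v$ is $S_1$-heavy, and moving an $S_2$-edge $vy$ into $S_1$ while the partner edge $e'$ avoids $v$ \emph{increases} $\delta_v$ by $2$; you would need to move an $S_1$-edge at $v$ into $S_2$, compensated by an $S_2$-edge not at $v,w_1,w_2$. Second, and more seriously, even with the correct orientation there is no guarantee that a valid exchange partner avoiding $v$, $w_1$ and $w_2$ exists: the relevant fundamental cycle can consist entirely of edges incident to these three vertices (for instance parallel edges at $v$, or a short cycle through $v,w_1,w_2$), and nothing in your sketch rules this out. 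You acknowledge this yourself ("the main technical obstacle") and defer it to an unspecified cut-structure analysis or an unspecified alternative reduction, so as written the argument does not close.

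For comparison: the paper does not reprove this lemma (it cites H\"orsch), and the standard argument avoids your bad case altogether by choosing a different reduction. Instead of deleting both $2$-vertices, delete $u_1,u_2$ and add a single new $2$-vertex $x$ joined to $w_1$ and $w_2$; the result is a double tree on $|G|-1$ vertices, so minimality gives a balanced decomposition in which $xw_1$ and $xw_2$ have different colours, say $xw_1\in S_1$, $xw_2\in S_2$. Reconstruct by deleting $x$ and attaching $u_1$ with $u_1w_1\in S_1$, $u_1v\in S_2$, and $u_2$ with $u_2w_2\in S_2$, $u_2v\in S_1$. Each $w_i$ keeps the colour it had towards $x$, and $v$ gains exactly one edge of each colour, so every degree difference is preserved and both colour classes remain spanning trees; no case analysis or swap is needed. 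This auxiliary-$2$-vertex trick is the same device the paper uses in Lemma~\ref{lem:7verts}(ii) and (iii), and it is the natural way to repair your argument: with it, your parity discussion and the bad case disappear entirely.
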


\subsection{Edge swaps}
We remind the reader of a simple tool that will be used repeatedly in our arguments.
Given a double tree decomposition $G= T_1\sqcup T_2$ and an edge $e\in E(T_1)$ we may swap it with some $f\in E(T_2)$ such that 
$G= S_1\sqcup S_2$,
where $E(S_1) = E(T_1) - e +f$ and $E(S_2)= E(T_2) - f +e$,
is a double tree decomposition.
Indeed, $T_1$ splits into two components after removing $e$ and adding $e$ to $T_2$ creates a cycle $C$.
We may thus choose $f$ to be any edge of $C - e$ with an endpoint in each component.
We will refer to this as \textit{swapping}~$e$.
Note that if $e \in E(T_1)$ is incident to a leaf $x$ of $T_1$, then $f$ must also be incident to $x$.
In particular, after swapping $e$, $x$ is still a leaf of $T_1$.
\begin{lemma}\label{lem:edgeswap}
Let $G$ be a double tree with a blue/red decomposition and $xy$ a blue edge such that $x$ is a leaf in the blue tree.
Then $x$ remains a leaf in the blue tree after swapping $xy$.
\end{lemma}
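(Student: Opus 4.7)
The plan is simply to unpack the swap procedure described in the paragraph immediately preceding the lemma, using the hypothesis that $x$ is a leaf.

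First, I would note that since $x$ is a leaf in the blue tree $T_1$, the edge $xy$ is the unique blue edge incident to $x$. Deleting $xy$ from $T_1$ therefore splits $T_1$ into two components: the isolated vertex $\{x\}$, and the tree $T_1 - x$, which contains $y$ together with every other vertex of $G$.

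Next, I would recall that the swap replaces $xy$ by some red edge $f \in E(T_2)$ chosen so that the two new spanning subgraphs $S_1 = T_1 - xy + f$ and $S_2 = T_2 - f + xy$ are again trees. As described before the lemma, this forces $f$ to lie on the unique cycle created in $T_2 + xy$ and to have endpoints in different components of $T_1 - xy$. Since one of those components is the single vertex $\{x\}$, the edge $f$ must be incident to $x$.

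Finally, the new blue tree $S_1$ has edge set $E(T_1) - xy + f$; since $xy$ was the only blue edge at $x$, the only blue edge at $x$ in $S_1$ is $f$. Hence $d_{S_1}(x) = 1$, so $x$ remains a leaf in the blue tree, as required. I do not anticipate any real obstacle here: the statement essentially just records the observation used in the motivating paragraph for later reuse, and the whole argument is a couple of lines. The only thing to be a little careful about is the (non-unique) choice of $f$, but the argument above shows that every admissible choice is incident to $x$, so the conclusion does not depend on which swap is performed.
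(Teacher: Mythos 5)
Your proposal is correct and follows exactly the same route as the paper: since $x$ is a leaf in the blue tree, one component of $T_1 - xy$ is the single vertex $x$, so the swapped-in edge $f$ must be incident to $x$, and hence $x$ remains a blue leaf. This is precisely the observation the paper records in the paragraph preceding the lemma.
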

This is particularly useful for $3$-vertices as in any blue/red double tree decomposition a $3$-vertex must be a leaf in some colour.

\subsection{\texorpdfstring{$3$}{3}-vertices}\label{sec:3verts}

Let $v$ be a $3$-vertex with (not necessarily distinct) neighbours $x,y,b$ where the edges \(vx, vy\) are red and the edge \(vb\) is blue as in Figure~\ref{fig:3red}.
Remove $v$ and join $x$ and $y$ in red to form $H$;
we will refer to this as the \emph{standard reduction for $3$-vertices}. If \(v\) has two blue neighbours and one red neighbour, then there is an analogous reduction.
Since $v$ was a leaf in the blue tree and $xvy$ is the only path from $x$ to $y$ in the red tree, the resulting blue/red decomposition is a double tree decomposition for $H$.
Further, $|H| = |G|-1$ so, by minimality, $H$ has a balanced double tree decomposition. A particularly useful feature of this reduction is that it is reversible: given a double tree containing the configuration shown in Figure~\ref{fig:3redb} we may delete the edge $xy$, add a new vertex $v$ joined to $x$ and $y$ in red and joined to $b$ in blue to form another double tree.

\begin{figure}[ht]
	\centering
	\begin{subfigure}{0.4\textwidth}
    \centering
	\begin{tikzpicture}
	[scale=.5,auto=center,circ/.style={circle,draw, minimum size=1pt, inner sep=2.5pt, fill=gray}, >=stealth]
	\begin{scope}
    	\node[circ, label={180:\small $x$}] (v1) at (0,0) {};
    	\node[circ, label={180:\small $y$}] (v2) at (0,-4) {};
    	\node[circ, label={0:\small $b$}] (v3) at (2.5,-2) {};
    	\node[circ, fill=white, label={180:\small $v$}] (w) at (0,-2) {};
    	\begin{scope}[every edge/.style={very thick,draw=red}]
    	\draw (v2) edge (w);
    	\draw (v1) edge (w);
    	\end{scope}
    	\begin{scope}[every edge/.style={very thick,draw=blue}]
    	\draw (v3) edge (w);
    	\end{scope}
	\end{scope}
	\end{tikzpicture}
	\caption{Start configuration.} 
	\label{fig:3red}
	\end{subfigure}
	\begin{subfigure}{0.4\textwidth}
    \centering
	\begin{tikzpicture}
	[scale=.5,auto=center,circ/.style={circle,draw, minimum size=1pt, inner sep=2.5pt, fill=gray}, >=stealth]
	\begin{scope}
    	\node[circ, label={180:\small $x$}] (v1) at (0,0) {};
    	\node[circ, label={180:\small $y$}] (v2) at (0,-4) {};
    	\node[circ, label={0:\small $b$}] (v3) at (2.5,-2) {};
    	\begin{scope}[every edge/.style={very thick,draw=red}]
    	\draw (v1) edge (v2);
    	\end{scope}
	\end{scope}
	\end{tikzpicture}
	\caption{Reduced configuration.}
	\label{fig:3redb}
	\end{subfigure}
	\caption{Standard reduction for $3$-vertices.}
\end{figure}
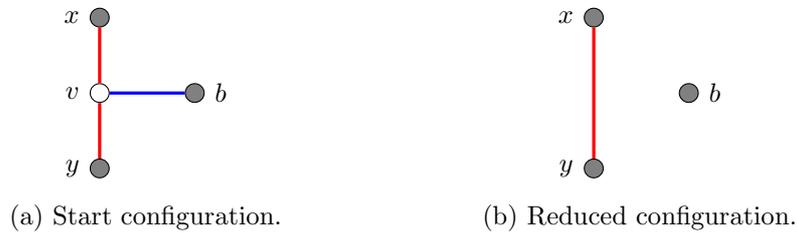
Using this reduction, H\"orsch~\cite[Proposition 10]{horsch2021globally} showed the following.
\begin{lemma}\label{lem:3vertsbig}
    Let \(v \in V(G)\) be a \(3\)-vertex and \(G = T_1 \sqcup T_2 \) a double tree decomposition. Suppose that \(v\) is a leaf in \(T_2\) with \(vb \in E(T_2)\) its unique incident edge. Then \(b\) is big.
\end{lemma}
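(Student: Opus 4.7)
The plan is to argue by contradiction via the standard reduction for $3$-vertices introduced earlier in this subsection. Suppose $b$ is small, and let $vx, vy$ denote the two $T_1$-edges at $v$ (since $v$ is a leaf in $T_2$, the two incident edges of $v$ other than $vb$ must lie in $T_1$). Because $T_1$ is a tree, it has no parallel edges, so $x \neq y$ and the reduction is legitimate.

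The reduction step deletes $v$ together with its three incident edges and adds a new edge $xy$ playing the role of $T_1$, producing a graph $H$ on $\lvert G\rvert - 1$ vertices. I would verify briefly that $H$ is a double tree: the new $T_1$ side is a spanning tree because the added $xy$ reconnects the two components of $T_1 - v$, and the $T_2$ side is a spanning tree because $v$ was already a leaf in $T_2$. By minimality of $G$, one then obtains a $c$-balanced double tree decomposition $H = S_1 \sqcup S_2$. Without loss of generality $xy \in S_1$. For the reconstruction, I would delete $xy$ and add $v$ back with $vx$ and $vy$ placed in $S_1$ and $vb$ placed in $S_2$; this is a valid double tree decomposition $G = T_1' \sqcup T_2'$.

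The only vertices whose colour-degrees differ from those in $H = S_1 \sqcup S_2$ are $v, x, y, b$, so only these need to be checked for balancedness. The vertex $v$ has degree $3 \leq c + 2$, so is small and automatically balanced. At $x$ and at $y$, the replacement of $xy$ by $vx$ or $vy$ inside $S_1$ preserves both colour-degrees, so balancedness is inherited from $H$. Finally $b$, although gaining one new $T_2$-edge, is small by assumption, and small vertices are balanced in every double tree decomposition of $G$. This contradicts the choice of $G$, forcing $b$ to be big.

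There is no substantive obstacle in this argument. The one minor point to be careful about is that the balanced decomposition of $H$ provided by minimality may place the new edge $xy$ in either $S_1$ or $S_2$, independently of which of $T_1, T_2$ hosted it during the reduction; the reconstruction accommodates either case uniformly by placing $vx, vy$ in whichever tree contains $xy$.
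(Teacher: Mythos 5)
Your proof is correct and follows exactly the route the paper intends: the statement is quoted from H\"orsch (the paper itself only cites \cite{horsch2021globally} after introducing the standard reduction for $3$-vertices), and your argument is precisely that reduction--rebalance--reconstruct scheme, with the degenerate cases ($x\neq y$ forced by $T_1$ being a tree, and $b$ possibly coinciding with $x$ or $y$ being harmless since $b$ is assumed small) handled correctly.
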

Suppose that \(v\) is a \(3\)-vertex with two edges to small vertices. Let \(vb\) be the third edge incident to \(v\). Lemma~\ref{lem:3vertsbig} shows that in any double tree decomposition of \(G\), the edge \(vb\) is of the opposite colour to the other two edges. However, swapping the edge \(vb\) gives a contradiction. So every \(3\)-vertex has at most one edge to a small vertex and so only the following types of 3-vertices can occur.

\begin{definition}[types of \(3\)-vertex]
We say that a $3$-vertex is
\begin{itemize}[noitemsep, topsep=0pt]
\item \emph{rich} if all its neighbours are big;
\item \emph{poor} if it is adjacent to three distinct vertices, two big, one small;
\item \emph{bad} if it has a small neighbour and is joined to a big vertex by a double edge.
\end{itemize}
\end{definition}
We are ready to prove a key result that gives structure around poor \(3\)-vertices.

\begin{lemma}\label{fact:poor3}
Let $v\in V(G)$ be a poor $3$-vertex with big neighbours $x,y$ and small neighbour $s$.
In any tree decomposition $G=T_1\sqcup T_2$ where \(vs \in E(T_1)\)\textnormal{:}
\begin{itemize}[noitemsep]
    \item Exactly one of \(vx, vy\) is in \(E(T_1)\).
    \item If \(vy \in E(T_2)\), then swapping \(vy\) gives the double tree decomposition \(G = T'_1 \sqcup T'_2\), where \(E(T'_1) = E(T_1) - vx + vy\) and \(E(T'_2) = E(T_2) + vx - vy\).
    \item The path from \(x\) to \(y\) in \(T_1\) does not contain \(s\).
\end{itemize}
\end{lemma}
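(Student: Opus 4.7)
The plan is to first pin down the $T_1$/$T_2$ structure at $v$, and then prove (2) and (3) simultaneously via a single swap contradiction built on Lemma~\ref{lem:3vertsbig}.

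For (1), observe that $T_1$ and $T_2$ are both spanning trees, so $d_{T_1}(v), d_{T_2}(v) \geq 1$ and they sum to $3$. Since $vs \in E(T_1)$, either $d_{T_1}(v) = 1$ (so $vs$ is the only $T_1$-edge at $v$) or $d_{T_1}(v) = 2$. In the former case $v$ is a leaf of $T_1$ with unique incident edge $vs$, and Lemma~\ref{lem:3vertsbig} forces $s$ to be big, contradicting that $v$ is poor. Hence $d_{T_1}(v) = 2$, and exactly one of $vx, vy$ lies in $E(T_1)$, proving (1).

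By the symmetry $x \leftrightarrow y$, I may assume henceforth that $vx \in E(T_1)$ and $vy \in E(T_2)$, as in the hypothesis of (2). The key structural observation: since $v$ is poor the three vertices $s, x, y$ are distinct, and $v$ has $T_1$-degree $2$ with distinct $T_1$-neighbours $s$ and $x$. Deleting $v$ from $T_1$ therefore yields exactly two components $C_s \ni s$ and $C_x \ni x$, and $y$ lies in exactly one of them. The unique cycle in $T_1 + vy$ passes through $v$ using the $T_1$-edge on whichever side contains $y$; consequently $vs$ is a valid swap partner for $vy$ iff $y \in C_s$, and $vx$ is a valid swap partner iff $y \in C_x$.

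I now claim $y \in C_x$. Suppose for contradiction that $y \in C_s$. Then swapping $vy$ with $vs$ produces a valid double tree decomposition $G = T_1' \sqcup T_2'$ with $E(T_1') = E(T_1) - vs + vy$ and $E(T_2') = E(T_2) - vy + vs$, in which $v$ is a leaf of $T_2'$ whose unique incident $T_2'$-edge is $vs$. Lemma~\ref{lem:3vertsbig} then forces $s$ to be big, a contradiction. Hence $y \in C_x$. Claim (3) follows immediately: the unique $T_1$-path from $x$ to $y$ lies entirely inside $C_x$ and so avoids $s \in C_s$. Claim (2) also follows, since the swap of $vy$ with $vx$ is now the valid swap and produces exactly the stated decomposition.

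The only real subtlety is noticing that the two undesirable scenarios -- the ``wrong'' swap partner for $vy$, and the $T_1$-path from $x$ to $y$ running through $s$ -- coincide with the single condition $y \in C_s$, so that one application of Lemma~\ref{lem:3vertsbig} after one carefully chosen swap dispatches both (2) and (3) at once. I do not anticipate a significant technical obstacle beyond identifying this common cause.
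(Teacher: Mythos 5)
Your proposal is correct and takes essentially the same route as the paper: the first bullet via Lemma~\ref{lem:3vertsbig} plus degree counting, and the second and third bullets by swapping $vy$ (whose partner must be incident to the $T_2$-leaf $v$) and invoking Lemma~\ref{lem:3vertsbig} again on the post-swap decomposition. The only difference is cosmetic -- by phrasing everything through the two components of $T_1 - v$ you obtain the third bullet directly, where the paper instead derives it from a blue-cycle contradiction after the swap.
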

\begin{proof}
\begin{figure}[ht] 
	\centering
	\begin{subfigure}{0.4\textwidth}
    \centering
	\begin{tikzpicture}
	[scale=.5,auto=center,circ/.style={circle,draw, minimum size=1pt, inner sep=2.5pt, fill=black}, >=stealth]
	\tikzset{snake it/.style={decorate, decoration=snake}}
	\begin{scope}
    	\node[circ, label={180:\small $x$}] (v1) at (0,0) {};
    	\node[circ, label={180:\small $y$}] (v2) at (0,-4) {};
    	\node[circ, fill=white, label={0:\small $s$}] (v3) at (2.5,-2) {};
    	\node[circ, fill=white, label={180:\small $v$}] (w) at (0,-2) {};
    	\begin{scope}[every edge/.style={very thick,draw=red}]
    	\draw (v2) edge (w);
    	\end{scope}
    	\begin{scope}[every edge/.style={very thick,draw=blue}]
    	\draw (v3) edge (w);
    	\draw (v1) edge (w);
    	        segment amplitude=.6mm,
	            segment length=3mm,
	            line after snake=0mm] (v2);
    	\end{scope}
	\end{scope}
	\end{tikzpicture}
	\caption{\(v\) a leaf in \(T_2\) with \(vy \in E(T_2)\).}\label{fig:Fact2a} 
	\end{subfigure}\hspace{1em}
	\begin{subfigure}{0.4\textwidth}
    \centering
	\begin{tikzpicture}
	[scale=.5,auto=center,circ/.style={circle,draw, minimum size=1pt, inner sep=2.5pt, fill=black}, >=stealth]
	\begin{scope}
    	\node[circ, label={180:\small $x$}] (x) at (0,0) {};
    	\node[circ, label={180:\small $y$}] (y) at (0,-4) {};
    	\node[circ, fill=white, label={0:\small $s$}] (s) at (2.5,-2) {};
    	\node[circ, fill=white, label={180:\small $v$}] (v) at (0,-2) {};
    	\begin{scope}[every edge/.style={very thick,draw=red}]
    	\draw (x) edge (v);
    	\end{scope}
    	\begin{scope}[every edge/.style={very thick,draw=blue}]
    	\draw (y) edge (v);
    	\draw (s) edge (v);
    	\end{scope}
	\end{scope}
	\end{tikzpicture}
	\caption{Configuration after swapping $vy$.}
	\end{subfigure}
	\caption{Edge swap in Lemma~\ref{fact:poor3}.}
\end{figure}
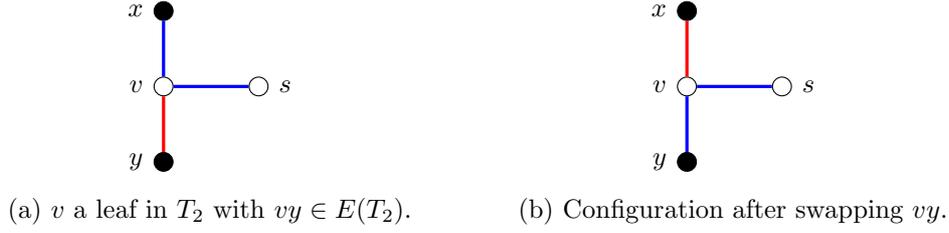
Firstly, by Lemma~\ref{lem:3vertsbig}, at least one of \(vx, vy\) is in \(E(T_1)\). They cannot both be otherwise \(v\) is an isolated vertex in \(T_2\). This gives the first bullet point.

Suppose that \(vy \in E(T_2)\) and so \(vx \in E(T_1)\) and \(v\) is a leaf in \(T_2\) as in Figure~\ref{fig:Fact2a}. Consider swapping \(vy\). Then \(vy\) becomes blue and so, by the first bullet point, \(vx\) becomes red.

We now prove the third bullet point. We may assume by symmetry that \(vy \in E(T_2)\) and so \(vx \in E(T_1)\). Suppose that there is a path \(P\) in \(T_1\) from \(x\) to \(y\) that contains \(s\) and let $P'$ be the subpath of $P$ from $s$ to $y$. Consider swapping $vy$: we have just shown that $vx$ becomes red. But then $yvsP'$ forms a blue cycle, which is impossible.
\end{proof}
\begin{lemma}\label{lem:badvertices}
Let $v\in V(G)$ be adjacent to $\ell\geq 1$ bad $3$-vertices via their double edges.
Then,
\begin{equation*}
d_G(v) \geq 2\ell +c+1.
\end{equation*}
\end{lemma}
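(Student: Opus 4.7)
The plan is to argue by contradiction: suppose $d_G(v) \leq 2\ell + c$ and build a $c$-balanced double tree decomposition of $G$, contradicting the choice of $G$ as a minimal counterexample. Let $u_1, \dots, u_\ell$ denote the bad $3$-vertices adjacent to $v$ via double edges, with corresponding small neighbours $s_1, \dots, s_\ell$.

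For the reduction, form $H$ by deleting each $u_j$ together with its three incident edges and adding a new edge $vs_j$. Then $|H| = |G| - \ell$ and $e(H) = e(G) - 2\ell = 2(|H| - 1)$. To see that $H$ is itself a double tree, pick any double tree decomposition of $G$: the two parallel edges $vu_j$ must lie in different trees (else they form a $2$-cycle), so exactly one tree contains the path $v-u_j-s_j$ and the other contains only a leaf edge to $v$. Replacing the path by the single edge $vs_j$ in the former tree and deleting the leaf edge in the latter yields a double tree decomposition of $H$.

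For the reconstruction, apply minimality to obtain a $c$-balanced decomposition $H = S_1 \sqcup S_2$. For each $j$, if $vs_j \in S_i$, colour the path $v-u_j-s_j$ (using one of the two parallel $vu_j$-edges and the edge $u_js_j$) in colour $i$, and assign the remaining parallel $vu_j$-edge to colour $3-i$. Since subdividing a tree edge and adding a leaf each preserve the tree property, this defines a valid double tree decomposition $G = T_1 \sqcup T_2$.

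It remains to verify $c$-balancedness. The vertices $u_j$ (degree $3$) and $s_j$ (small) are automatically balanced. The reconstruction preserves the degrees at each $s_j$, since the edge $vs_j$ in colour $i$ is replaced by $u_js_j$ in the same colour, and every vertex unaffected by the reconstruction inherits balance from $S_1 \sqcup S_2$. The key point is $v$: each $u_j$ contributes exactly one edge of each colour at $v$ (the two ends of its double edge), regardless of which side $vs_j$ was originally on. Writing $a$ and $b$ for the number of edges at $v$ in $S_1$ and $S_2$ not of the form $vs_j$, we therefore have $d_{T_1}(v) = a + \ell$ and $d_{T_2}(v) = b + \ell$. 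Since $a + b = d_H(v) - \ell = d_G(v) - 2\ell \leq c$, it follows that
\[
|d_{T_1}(v) - d_{T_2}(v)| = |a - b| \leq a + b \leq c,
\]
so $v$ is balanced, delivering the contradiction. The only subtlety is noticing this cancellation at $v$: the colour of each $vs_j$ in $H$ is irrelevant because the reconstruction always contributes one edge of each colour through $u_j$, which is precisely what makes the reduction clean.
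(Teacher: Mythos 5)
Your proof is correct and rests on the same two ingredients as the paper's: the standard reduction for $3$-vertices together with the observation that the two copies of each double edge at $v$ must land in different trees, so each bad neighbour contributes exactly one edge of each colour at $v$. The difference is organisational. The paper reduces only a \emph{single} bad vertex $u$, keeps the other $\ell-1$ double edges inside the smaller double tree $H$, and after reconstruction deduces $d_{T_1'}(v),d_{T_2'}(v)\geq \ell$, which together with $d_G(v)\leq 2\ell+c$ forces $v$ to be balanced; you instead reduce all $\ell$ bad vertices simultaneously and count the leftover edges, $a+b=d_G(v)-2\ell\leq c$, directly. The two counts are equivalent, but your simultaneous reduction needs one extra (easy) check that you omit: no $s_j$ may be one of the deleted vertices $u_{j'}$, for otherwise the new edge $vs_j$ would be attached to a vertex that no longer exists. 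This cannot happen, since it would place five edges on the three vertices $v,u_j,u_{j'}$, whereas any three vertices of a double tree span at most four edges. With that remark added your argument is complete; its payoff is a slightly more transparent balance count at $v$, at the cost of having to verify (as you do) that the simultaneous reduction is well defined and again produces a double tree.
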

\begin{proof}
Let $v$ be a vertex in $G$ with a bad neighbour $u$, and let $w\neq v$ be the small neighbour of $u$.
Fix a double tree decomposition $G=T_1\sqcup T_2$.
By symmetry we may assume $uw\in E(T_1)$.

Apply the standard reduction for $3$-vertices to $u$ and let $H$ be the resulting graph with balanced double tree decomposition $H=S_1\sqcup S_2$.
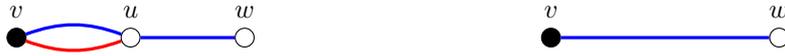
\begin{figure}[ht]
	\centering
	\begin{tikzpicture}
	[scale=.5,auto=center,circ/.style={circle,draw, minimum size=1pt, inner sep=2.5pt, fill=black}, >=stealth]
	\begin{scope}
    	\node[circ, label={\small $v$}] (v) at (0,0) {};
    	\node[circ, fill=white, label={\small $u$}] (u) at (3,0) {};
    	\node[circ, fill=white, label={\small $w$}] (w) at (6,0) {};
    	\begin{scope}[every edge/.style={very thick,draw=red}]
    	\draw [bend right=20] (v) edge (u);
    	\end{scope}
    	\begin{scope}[every edge/.style={very thick,draw=blue}]
    	\draw [bend left=20] (v) edge (u);
    	\draw (u) edge (w);
    	\end{scope}
	\end{scope}
	\begin{scope}[xshift=400pt]
    	\node[circ, label={\small $v$}] (v) at (0,0) {};
    	\node[circ, fill=white, label={\small $w$}] (w) at (6,0) {};
    	\begin{scope}[every edge/.style={very thick,draw=blue}]
    	\draw (v) edge (w);
    	\end{scope}
	\end{scope}
	\end{tikzpicture}
	\caption{Reduction step in Lemma~\ref{lem:badvertices}.}
\end{figure}
Without loss of generality we may assume that $vw\in E(S_1)$.
Define $G= T_1'\sqcup T_2'$, where
\begin{align*}
E(T_1') &= E(S_1) - vw + vu + uw,\\
E(T_2') &= E(S_2) + vu,
\end{align*}
reversing the reduction. All vertices, except possibly $v$, are balanced in $T'_1 \sqcup T'_2$.
Since $v$ is adjacent to $\ell$ bad $3$-vertices via double edges, we have $d_{T'_1}(v) \geq \ell$ and $d_{T'_2}(v)\geq \ell$. In particular, if $d_G(v) \leq 2\ell + c$, then $v$ is balanced also, which is a contradiction.
\end{proof}
\subsection{Critical vertices}\label{subsec:crit}
A vertex $v$ in $G$ is said to be \emph{critical} if $d_G(v)=c+3$, that is, if its degree is just large enough for it to be big.
A simple observation is that critical vertices are balanced in a blue/red decomposition if and only if both their blue and red degrees are at least two.
We combine this observation with the final bullet point of Lemma~\ref{fact:poor3} to great effect: suppose a vertex $x$ has a blue edge to a poor \(3\)-vertex $v$ and $v$ has blue degree two in a given blue/red decomposition. Then the final bullet point of Lemma~\ref{fact:poor3} guarantees that \(v\) has blue degree at least two.
If further $v$ is critical and has red degree at least two, then it is balanced.
\begin{lemma}\label{lem:7verts} 
Let $v\in V(G)$ be a critical vertex.
\begin{enumerate}[noitemsep, label=\emph{(}\roman*\emph{)}]
\item If all neighbours of $v$ are small, then $v$ is not adjacent to bad $3$-vertices.
\item At most one neighbour of $v$ is a poor $3$-vertex.
\item If $v$ is adjacent to a $2$-vertex, then \(v\) is not adjacent to a poor $3$-vertex.
\end{enumerate}
\end{lemma}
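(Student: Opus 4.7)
Each part is proved by contradiction following the template from the start of the section: assume the forbidden configuration at $v$, apply standard reductions to produce a smaller double tree $H$, extract a balanced decomposition $H = S_1 \sqcup S_2$ by minimality of $G$, and reconstruct a decomposition of $G$. Since $v$ is critical, balance at $v$ is equivalent to both colour degrees being at least $2$. The main obstacle is part~(i): the bound of Lemma~\ref{lem:badvertices} is tight for a critical $v$ with one bad neighbour, so the all-small-neighbours hypothesis must be leveraged via a local edge swap that avoids touching big vertices other than $v$.

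For part~(i), let $u$ be a bad $3$-vertex neighbour of $v$ with small third neighbour $w$. The standard $3$-vertex reduction at $u$ yields $H = G - u + vw$; assume WLOG $vw \in S_1$. Reconstruction gives $v$'s colour degrees in $G$ as $(d_{S_1}(v), d_{S_2}(v)+1)$, so balance fails only when $d_{S_1}(v) = 1$, i.e.\ $vw$ is the unique blue edge at $v$ in $H$ and the remaining $c+1$ edges at $v$ are all red, leading to small neighbours $x_1, \dotsc, x_{c+1}$. Swap $vw$ in $H$: because $v$ is a blue leaf in $S_1$, the bridging edge must be incident to $v$, so the swap recolours $vw$ to red and some edge $vx_{j^*}$ to blue, where $x_{j^*}$ is $v$'s red $S_2$-neighbour on the $S_2$-path from $w$. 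The resulting balanced decomposition of $H$ has $vw$ red, and reconstruction gives $v$ colour degrees $(2, c+1)$ in $G$; every other vertex affected by the swap or reconstruction is small and hence balanced automatically, contradicting minimality.

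For part~(ii), assume $v$ has two poor $3$-vertex neighbours $u_1, u_2$ with big other-neighbours $z_1, z_2$ and small neighbours $s_1, s_2$. Apply the standard $3$-vertex reduction at $u_1$: by Lemma~\ref{lem:3vertsbig}, the singleton-colour edge at $u_1$ goes to a big vertex, so the matched pair at $u_1$ contains $u_1 s_1$ and the new edge in $H$ is either $vs_1$ (if $u_1v$ is matched) or $s_1 z_1$ (if $u_1 z_1$ is). In the former case the reconstruction may unbalance $z_1$, in the latter it may unbalance $v$. Either way, exploit the surviving poor $3$-vertex $u_2$: Lemma~\ref{fact:poor3} at $u_2$ provides a controlled swap through the edges $u_2 v, u_2 z_2$ that rebalances the critical vertex, with the third bullet of that lemma guaranteeing that $v$'s colour degrees both remain at least $2$ after the swap.

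For part~(iii), assume $v$ has a $2$-vertex neighbour $a$ (with other neighbour $b$) and a poor $3$-vertex neighbour $u$. The standard $2$-vertex reduction at $a$ produces $H$ with balanced decomposition $S_1 \sqcup S_2$. Reconstruct by giving $av$ and $ab$ opposite colours, choosing the colour of $av$ to add to $v$'s smaller colour in $H$; this yields a balanced decomposition of $G$ unless the parallel constraint at $b$ forces the opposite choice, i.e.\ when $v$ and $b$ share the same smaller colour and both realise the maximum gap $c$ in $H$. In that edge case, invoke Lemma~\ref{fact:poor3} at $u$: a swap through $u$'s edges shifts $v$'s colour distribution in $H$ by one, breaking the clash and enabling a balanced reconstruction.
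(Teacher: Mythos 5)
Your part (i) is essentially the paper's argument (split on whether $d_{S_1}(v)=1$, and in that case swap $vw$, using the all-small hypothesis to ensure no big vertex other than $v$ is touched), and it is correct.

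Parts (ii) and (iii), however, have a genuine gap. In both cases your repair mechanism is a single edge swap at a poor $3$-vertex adjacent to $v$ (via Lemma~\ref{fact:poor3}), but such a swap necessarily exchanges the colours of the two edges from that $3$-vertex to its \emph{two} big neighbours: it recolours $u_2v$ only at the cost of recolouring $u_2z_2$ (resp.\ $uv$ at the cost of $uv_2$ in (iii)). This shifts the colour degrees of $z_2$ by $+1$/$-1$, so its imbalance can jump from $c$ (or $c-1$) to above $c$; nothing in your argument controls the balance of $z_2$, and no parity lemma applies to it (Lemma~\ref{lem:2verts} constrains only neighbours of $2$-vertices). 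Worse, in your ``former case'' of (ii) (where $u_1v$ is paired with $u_1s_1$ and the new edge is $vs_1$) the vertex at risk after reconstruction is $z_1$, which a swap at $u_2$ does not touch at all, so the claim that the swap ``rebalances the critical vertex'' does not address the actual problem vertex. Finally, the appeal to the third bullet of Lemma~\ref{fact:poor3} needs a specific colour configuration at the $3$-vertex (its small edge and its edge towards $v$ in the same tree after the swap), which you neither arrange nor verify. The paper's proof avoids all of this by reducing at \emph{both} special neighbours of $v$ simultaneously and adding an auxiliary $2$-vertex $x$ joined to the two big outer neighbours ($u_1,w_1$, resp.\ $v_1,v_2$); after reconstruction, the edge replacing $xu_1$ (resp.\ $xw_1$) inherits its colour, so the outer big vertices' colour degrees are exactly those of the balanced decomposition of $H$, and the remaining imbalance is confined to the critical vertex $v$, where it is handled by the specific swaps and the path argument of Lemma~\ref{fact:poor3}. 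Your proposal has no analogue of this device, and without it the reconstruction in (ii) and (iii) cannot be certified balanced.
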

Note that Lemmas~\ref{lem:atmostone2vert}, \ref{lem:7verts}.(ii) and \ref{lem:7verts}.(iii) yield that a critical vertex has at most one neighbour that is either a 2-vertex or a poor 3-vertex.

\begin{proof} \textbf{(i)}
Suppose not and let $G = T_1\sqcup T_2$ be a double tree decomposition. 
Let $v\in V(G)$ be critical with all neighbours small and let $u\in \Gamma(v)$ be a bad $3$-vertex, and $w\neq v$ be the small neighbour of $u$.
By symmetry we may assume $uw\in E(T_1)$.

Apply the standard reduction for $3$-vertices to $u$, let $H$ be the resulting double tree and $H= S_1\sqcup S_2$ a balanced double tree decomposition.
Without loss of generality we may assume that $vw\in E(S_1)$.

\textit{Case 1.} $d_{S_1}(v) \geq 2$.

Reverse the reduction to get $G=T_1'\sqcup T_2'$, where 
\begin{align*}
E(T_1') &= E(S_1) - vw + vu + uw,\\
E(T_2') &= E(S_2) + vu.
\end{align*}

Then $d_{T_1'}(v) = d_{S_1}(v)\geq 2$ and \(d_{T'_2}(v) \geq d_{S_2}(v) + 1 \geq 2\), and so, since $v$ is critical, it is balanced and thus the decomposition $T_1'\sqcup T_2'$ is balanced, a
contradiction.

\textit{Case 2.} $d_{S_1}(v) =1$.

Let $H=S_1'\sqcup S_2'$ be the decomposition obtained after swapping $vw$.
Since $v$ is a leaf in $S_1$, it remains a leaf in $S_1'$.
Moreover, every vertex in the neighbourhood of $v$ is small, so every big vertex is balanced in $S_1'\sqcup S_2'$.
Now $vw\in E(S_2')$ and $d_{S_2'}(v) \geq 2$, so by Case 1 we can find a balanced decomposition $G= T_1'\sqcup T_2'$, a 
contradiction.

\textbf{(ii)} Suppose that $v\in V(G)$ is critical and $u,w\in \Gamma(v)$ are distinct poor $3$-vertices.
Let the other neighbours of $u$ and $w$ be $u_1,u_2$ and $w_1,w_2$ (not necessarily distinct) respectively, as in Figure~\ref{fig:critred1a}, where $u_2$, $w_2$ are small.
By Lemma~\ref{fact:poor3} we may perform edge swaps to ensure $\{vw, ww_2\}\subset E(T_i)$ and $\{uv,uu_2\} \subset E(T_j)$ for some $i,j\in \{1,2\}$.
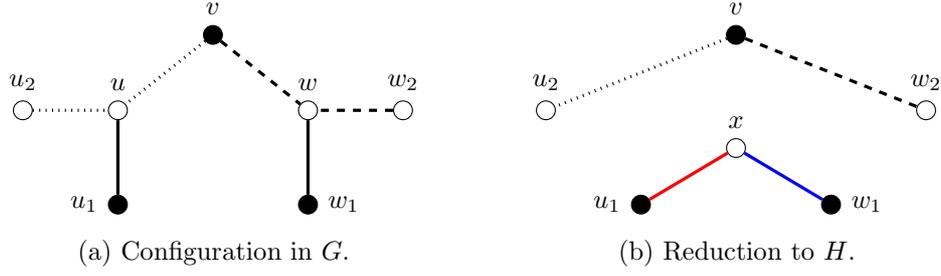
\begin{figure}[ht]
	\centering
	\begin{subfigure}{0.4\textwidth}
    \centering
	\begin{tikzpicture}
	[scale=.5,auto=center,circ/.style={circle,draw, minimum size=1pt, inner sep=2.5pt, fill=black}, >=stealth]
	\begin{scope}
    	\node[circ, label={\small $v$}] (v) at (0,0) {};
    	\node[circ, fill=white, label={\small $u_2$}] (u2) at (-5,-2) {};
    	\node[circ, label={180:\small $u_1$}] (u1) at (-2.5,-4.5) {};
    	\node[circ, label={0:\small $w_1$}] (w1) at (2.5,-4.5) {};
    	\node[circ, fill=white, label={\small $w_2$}] (w2) at (5,-2) {};
    	\node[circ, fill=white, label={\small $u$}] (u) at (-2.5,-2) {};
    	\node[circ, fill=white, label={\small $w$}] (w) at (2.5,-2) {};
    	\begin{scope}[every edge/.style={very thick,draw=black}]
    	\draw [dotted] (v) edge (u);
    	\draw [dotted] (u2) edge (u);
    	\draw (u1) edge (u);
    	\draw [dashed] (v) edge (w);
    	\draw (w1) edge (w);
    	\draw [dashed] (w2) edge (w);
    	\end{scope}
	\end{scope}
	\end{tikzpicture}
	\caption{Configuration in $G$.}
	\label{fig:critred1a}
	\end{subfigure}\hspace{1em}
	\begin{subfigure}{0.4\textwidth}
    \centering
	\begin{tikzpicture}
	[scale=.5,auto=center,circ/.style={circle,draw, minimum size=1pt, inner sep=2.5pt, fill=black}, >=stealth]
	\begin{scope}
    	\node[circ, label={\small $v$}] (v) at (0,0) {};
    	\node[circ, fill=white, label={\small $u_2$}] (u2) at (-5,-2) {};
    	\node[circ, label={180:\small $u_1$}] (u1) at (-2.5,-4.5) {};
    	\node[circ, label={0:\small $w_1$}] (w1) at (2.5,-4.5) {};
    	\node[circ, fill=white, label={\small $w_2$}] (w2) at (5,-2) {};
    	\node[circ, fill=white, label={\small $x$}] (x) at (0,-3) {};
    	\begin{scope}[every edge/.style={very thick,draw=red}]
    	\draw (x) edge (u1);
    	\end{scope}
    	\begin{scope}[every edge/.style={very thick,draw=blue}]
    	\draw (x) edge (w1);
    	\end{scope}
    	\begin{scope}[every edge/.style={very thick,draw=black}]
    	\draw [dotted] (v) edge (u2);
    	\draw [dashed] (v) edge (w2);
    	\end{scope}
	\end{scope}
	\end{tikzpicture}
	\caption{Reduction to $H$.}
	\end{subfigure}
	\caption{Reduction step in Lemma~\ref{lem:7verts}.(ii). Dashed edges are in the same tree, dotted edges are in the same tree.}
\end{figure}
Apply the standard reduction for $3$-vertices to $u$ and $w$ and add a $2$-vertex $x$ adjacent to both $u_1$ and $w_1$, yielding a double tree $H$ which by induction has a balanced double tree decomposition $H = S_1\sqcup S_2$.
Without loss of generality we may assume that $vw_2\in E(S_1)$.
We consider multiple cases.

\textit{Case 1.}
$vu_2\in E(S_1)$.

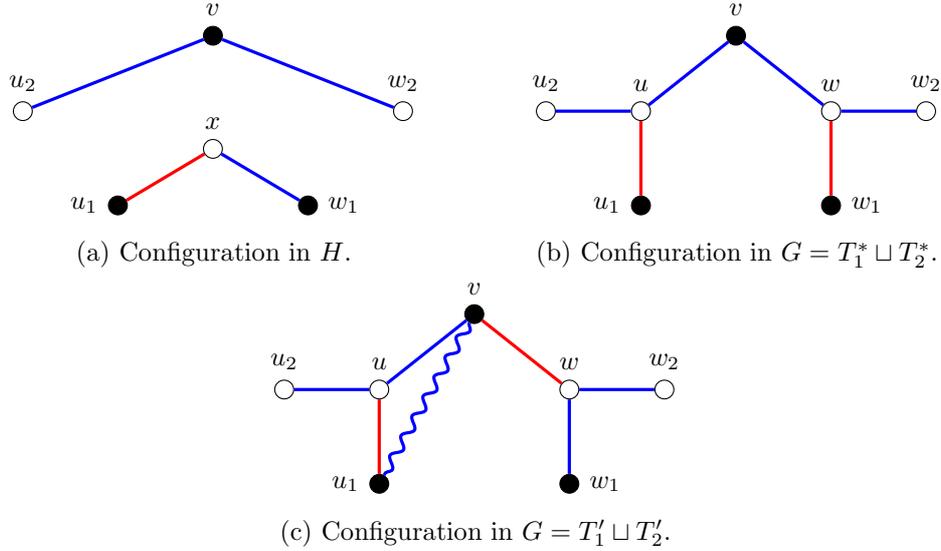
\begin{figure}[ht]
	\centering
	\begin{subfigure}{0.4\textwidth}
    \centering
	\begin{tikzpicture}
	[scale=.5,auto=center,circ/.style={circle,draw, minimum size=1pt, inner sep=2.5pt, fill=black}, >=stealth]
    	\node[circ, label={\small $v$}] (v) at (0,0) {};
    	\node[circ, fill=white, label={\small $u_2$}] (u2) at (-5,-2) {};
    	\node[circ, label={180:\small $u_1$}] (u1) at (-2.5,-4.5) {};
    	\node[circ, label={0:\small $w_1$}] (w1) at (2.5,-4.5) {};
    	\node[circ, fill=white, label={\small $w_2$}] (w2) at (5,-2) {};
    	\node[circ, fill=white, label={\small $x$}] (x) at (0,-3) {};
    	\begin{scope}[every edge/.style={very thick,draw=red}]
    	\draw (x) edge (u1);
    	\end{scope}
    	\begin{scope}[every edge/.style={very thick,draw=blue}]
    	\draw (x) edge (w1);
    	\draw (v) edge (u2);
    	\draw (v) edge (w2);
    	\end{scope}
    	\begin{scope}[every edge/.style={very thick,draw=black}]
    	\end{scope}
	\end{tikzpicture}
	\caption{Configuration in $H$.}
	\end{subfigure}\hspace{1em}
	\begin{subfigure}{0.4\textwidth}
    \centering
	\begin{tikzpicture}
	[scale=.5,auto=center,circ/.style={circle,draw, minimum size=1pt, inner sep=2.5pt, fill=black}, >=stealth]
	\begin{scope}
    	\node[circ, label={\small $v$}] (v) at (0,0) {};
    	\node[circ, fill=white, label={\small $u_2$}] (u2) at (-5,-2) {};
    	\node[circ, label={180:\small $u_1$}] (u1) at (-2.5,-4.5) {};
    	\node[circ, label={0:\small $w_1$}] (w1) at (2.5,-4.5) {};
    	\node[circ, fill=white, label={\small $w_2$}] (w2) at (5,-2) {};
    	\node[circ, fill=white, label={\small $u$}] (u) at (-2.5,-2) {};
    	\node[circ, fill=white, label={\small $w$}] (w) at (2.5,-2) {};
    	\begin{scope}[every edge/.style={very thick,draw=red}]
    	\draw (u1) edge (u);
    	\draw (w1) edge (w);
    	\end{scope}
    	\begin{scope}[every edge/.style={very thick,draw=blue}]
    	\draw (v) edge (u);
    	\draw (u2) edge (u);
    	\draw (v) edge (w);
    	\draw (w2) edge (w);
    	\end{scope}
    	\begin{scope}[every edge/.style={very thick,draw=black}]
    	\end{scope}
	\end{scope}
	\end{tikzpicture}
	\caption{Configuration in $G=T_1^{\ast}\sqcup T_2^{\ast}$.}
	\label{fig:critrec1b}
	\end{subfigure}
	\begin{subfigure}{0.4\textwidth}
    \centering
	\begin{tikzpicture}
	[scale=.5,auto=center,circ/.style={circle,draw, minimum size=1pt, inner sep=2.5pt, fill=black}, >=stealth]
    	\node[circ, label={\small $v$}] (v) at (0,0) {};
    	\node[circ, fill=white, label={\small $u_2$}] (u2) at (-5,-2) {};
    	\node[circ, label={180:\small $u_1$}] (u1) at (-2.5,-4.5) {};
    	\node[circ, label={0:\small $w_1$}] (w1) at (2.5,-4.5) {};
    	\node[circ, fill=white, label={\small $w_2$}] (w2) at (5,-2) {};
    	\node[circ, fill=white, label={\small $u$}] (u) at (-2.5,-2) {};
    	\node[circ, fill=white, label={\small $w$}] (w) at (2.5,-2) {};
    	\begin{scope}[every edge/.style={very thick,draw=red}]
    	\draw (u1) edge (u);
    	\draw (v) edge (w);
    	\end{scope}
    	\begin{scope}[every edge/.style={very thick,draw=blue}]
    	\draw (w1) edge (w);
    	\draw (v) edge (u);
    	\draw (u2) edge (u);
    	\draw (w2) edge (w);
    	\draw (v) edge[ snake=snake,
    	        segment amplitude=.6mm,
	            segment length=3mm,
	            line after snake=0mm] (u1);
    	\end{scope}
    	\begin{scope}[every edge/.style={very thick,draw=black}]
    	\end{scope}
	\end{tikzpicture}
	\caption{Configuration in $G=T_1'\sqcup T_2'$.}
	\label{fig:critrec1c}
	\end{subfigure}
	\caption{Reconstruction in Case 1.}
\end{figure}

By symmetry we may assume that $xw_1\in E(S_1)$.
Reverse the reductions and delete $x$ to give $G= T_1^{\ast} \sqcup T_2^{\ast}$, where 
\begin{align*}
E(T_1^{\ast}) &= E(S_1) - xw_1 -vu_2 - vw_2 + vu + uu_2 + vw+ww_2,\\
E(T_2^{\ast}) &= E(S_2) -xu_1 + uu_1 + ww_1,
\end{align*}
as in Figure~\ref{fig:critrec1b}.
Consider swapping $ww_1$. 
Lemma~\ref{fact:poor3} implies that we get the decomposition $G= T_1' \sqcup T_2'$ shown in Figure~\ref{fig:critrec1c}, where
\begin{align*}
E(T_1') &= E(T_1^{\ast}) -vw + ww_1,\\
E(T_2') &= E(T_2^{\ast}) -ww_1 + vw.
\end{align*}

We claim that $G =T_1'\sqcup T_2'$ is balanced.
All degree differences are the same as in \(S_1 \sqcup S_2\) except at \(v\) where a blue edge has become red.
By Lemma~\ref{fact:poor3}, there is a path in $T_1'$ from $v$ to $u_1$ that does not use $u_2$. Since also $uv \in E(T_1')$, we get $d_{T_1'}(v) \geq 2$.
As $v$ is critical this means it is balanced, and therefore $G=T_1'\sqcup T_2'$ is balanced, as required.

\textit{Case 2.i.}
$vu_2\in S_2$ and $xu_1\in S_1$.

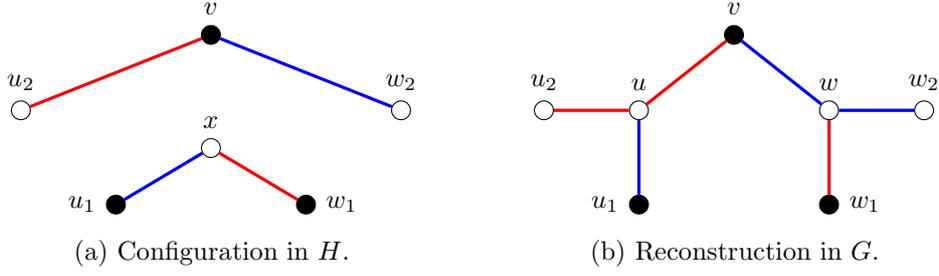
\begin{figure}[ht]
	\centering
	
	\begin{subfigure}{0.4\textwidth}
    \centering
	\begin{tikzpicture}
	[scale=.5,auto=center,circ/.style={circle,draw, minimum size=1pt, inner sep=2.5pt, fill=black}, >=stealth]
    	\node[circ, label={\small $v$}] (v) at (0,0) {};
    	\node[circ, fill=white, label={\small $u_2$}] (u2) at (-5,-2) {};
    	\node[circ, label={180:\small $u_1$}] (u1) at (-2.5,-4.5) {};
    	\node[circ, label={0:\small $w_1$}] (w1) at (2.5,-4.5) {};
    	\node[circ, fill=white, label={\small $w_2$}] (w2) at (5,-2) {};
    	\node[circ, fill=white, label={\small $x$}] (x) at (0,-3) {};
    	\begin{scope}[every edge/.style={very thick,draw=red}]
    	\draw (v) edge (u2);
    	\draw (x) edge (w1);
    	\end{scope}
    	\begin{scope}[every edge/.style={very thick,draw=blue}]
    	\draw (x) edge (u1);
    	\draw (v) edge (w2);
    	\end{scope}
    	\begin{scope}[every edge/.style={very thick,draw=black}]
    	\end{scope}
	\end{tikzpicture}
	\caption{Configuration in $H$.}
	\end{subfigure}\hspace{1em}
	\begin{subfigure}{0.4\textwidth}
    \centering
	\begin{tikzpicture}
	[scale=.5,auto=center,circ/.style={circle,draw, minimum size=1pt, inner sep=2.5pt, fill=black}, >=stealth]
    	\node[circ, label={\small $v$}] (v) at (0,0) {};
    	\node[circ, fill=white, label={\small $u_2$}] (u2) at (-5,-2) {};
    	\node[circ, label={180:\small $u_1$}] (u1) at (-2.5,-4.5) {};
    	\node[circ, label={0:\small $w_1$}] (w1) at (2.5,-4.5) {};
    	\node[circ, fill=white, label={\small $w_2$}] (w2) at (5,-2) {};
    	\node[circ, fill=white, label={\small $u$}] (u) at (-2.5,-2) {};
    	\node[circ, fill=white, label={\small $w$}] (w) at (2.5,-2) {};
    	\begin{scope}[every edge/.style={very thick,draw=red}]
    	\draw (w1) edge (w);
    	\draw (v) edge (u);
    	\draw (u2) edge (u);
    	\end{scope}
    	\begin{scope}[every edge/.style={very thick,draw=blue}]
    	\draw (u1) edge (u);
    	\draw (v) edge (w);
    	\draw (w2) edge (w);
    	\end{scope}
    	\begin{scope}[every edge/.style={very thick,draw=black}]
    	\end{scope}
    \end{tikzpicture}
	\caption{Reconstruction in $G$.}
	\label{fig:critrec2ib}
	\end{subfigure}
	\caption{Reconstruction in Case 2.i.}
\end{figure}

Reverse the reductions to get $G= T_1' \sqcup T_2'$, where 
\begin{align*}
E(T_1') &= E(S_1) -xu_1 -vw_2 + uu_1 + vw+ww_2,\\
E(T_2') &= E(S_2) - xw_1 - vu_2 + ww_1 + uv + uu_2,
\end{align*}
as in Figure~\ref{fig:critrec2ib}.
Further, since $S_1\sqcup S_2$ is balanced and degree differences of big vertices remained unchanged, $T_1'\sqcup T_2'$ is balanced, a
contradiction.

\textit{Case 2.ii.}
$vu_2\in E(S_2)$ and $xu_1\in E(S_2)$.

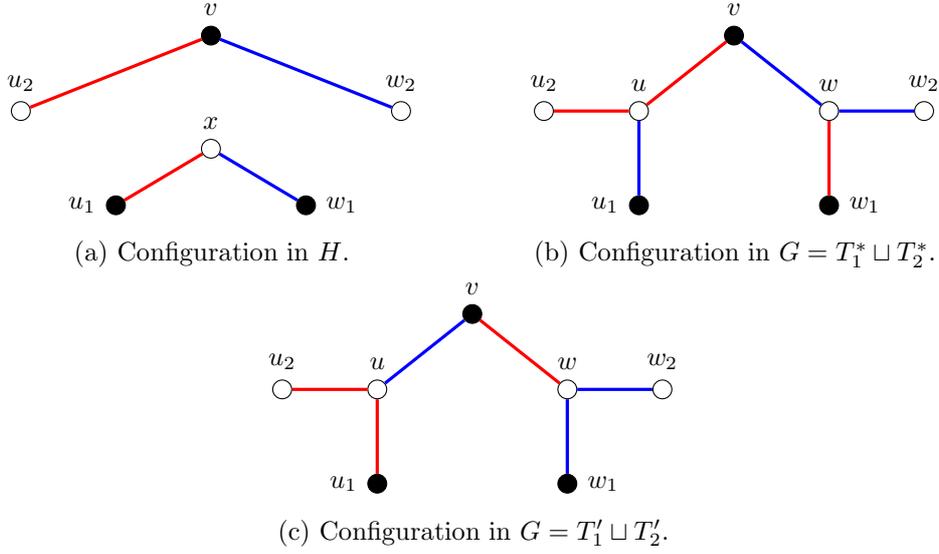
\begin{figure}[ht]
	\centering
	\begin{subfigure}{0.4\textwidth}
    \centering
	\begin{tikzpicture}
	[scale=.5,auto=center,circ/.style={circle,draw, minimum size=1pt, inner sep=2.5pt, fill=black}, >=stealth]
    	\node[circ, label={\small $v$}] (v) at (0,0) {};
    	\node[circ, fill=white, label={\small $u_2$}] (u2) at (-5,-2) {};
    	\node[circ, label={180:\small $u_1$}] (u1) at (-2.5,-4.5) {};
    	\node[circ, label={0:\small $w_1$}] (w1) at (2.5,-4.5) {};
    	\node[circ, fill=white, label={\small $w_2$}] (w2) at (5,-2) {};
    	\node[circ, fill=white, label={\small $x$}] (x) at (0,-3) {};
    	\begin{scope}[every edge/.style={very thick,draw=red}]
    	\draw (v) edge (u2);
    	\draw (x) edge (u1);
    	\end{scope}
    	\begin{scope}[every edge/.style={very thick,draw=blue}]
    	\draw (x) edge (w1);
    	\draw (v) edge (w2);
    	\end{scope}
    	\begin{scope}[every edge/.style={very thick,draw=black}]
    	\end{scope}
	\end{tikzpicture}
	\caption{Configuration in $H$.}
	\end{subfigure}\hspace{1em}
	\begin{subfigure}{0.4\textwidth}
    \centering
	\begin{tikzpicture}
	[scale=.5,auto=center,circ/.style={circle,draw, minimum size=1pt, inner sep=2.5pt, fill=black}, >=stealth]
    	\node[circ, label={\small $v$}] (v) at (0,0) {};
    	\node[circ, fill=white, label={\small $u_2$}] (u2) at (-5,-2) {};
    	\node[circ, label={180:\small $u_1$}] (u1) at (-2.5,-4.5) {};
    	\node[circ, label={0:\small $w_1$}] (w1) at (2.5,-4.5) {};
    	\node[circ, fill=white, label={\small $w_2$}] (w2) at (5,-2) {};
    	\node[circ, fill=white, label={\small $u$}] (u) at (-2.5,-2) {};
    	\node[circ, fill=white, label={\small $w$}] (w) at (2.5,-2) {};
    	\begin{scope}[every edge/.style={very thick,draw=red}]
    	\draw (w1) edge (w);
    	\draw (v) edge (u);
    	\draw (u2) edge (u);
    	\end{scope}
    	\begin{scope}[every edge/.style={very thick,draw=blue}]
    	\draw (u1) edge (u);
    	\draw (v) edge (w);
    	\draw (w2) edge (w);
    	\end{scope}
    	\begin{scope}[every edge/.style={very thick,draw=black}]
    	\end{scope}
	\end{tikzpicture}
	\caption{Configuration in $G= T_1^{\ast}\sqcup T_2^{\ast}$.}
	\label{fig:critrec2iib}
	\end{subfigure}
	\begin{subfigure}{0.4\textwidth}
    \centering
	\begin{tikzpicture}
	[scale=.5,auto=center,circ/.style={circle,draw, minimum size=1pt, inner sep=2.5pt, fill=black}, >=stealth]
    	\node[circ, label={\small $v$}] (v) at (0,0) {};
    	\node[circ, fill=white, label={\small $u_2$}] (u2) at (-5,-2) {};
    	\node[circ, label={180:\small $u_1$}] (u1) at (-2.5,-4.5) {};
    	\node[circ, label={0:\small $w_1$}] (w1) at (2.5,-4.5) {};
    	\node[circ, fill=white, label={\small $w_2$}] (w2) at (5,-2) {};
    	\node[circ, fill=white, label={\small $u$}] (u) at (-2.5,-2) {};
    	\node[circ, fill=white, label={\small $w$}] (w) at (2.5,-2) {};
    	\begin{scope}[every edge/.style={very thick,draw=red}]
    	\draw (u2) edge (u);
    	\draw (v) edge (w);
    	\draw (u1) edge (u);
    	\end{scope}
    	\begin{scope}[every edge/.style={very thick,draw=blue}]
    	\draw (w1) edge (w);
    	\draw (v) edge (u);
    	\draw (w2) edge (w);
    	\end{scope}
    	\begin{scope}[every edge/.style={very thick,draw=black}]
    	\end{scope}
	\end{tikzpicture}
	\caption{Configuration in $G= T_1'\sqcup T_2'$.}
	\end{subfigure}
	\caption{Reconstruction in Case 2.ii.}
\end{figure}

Reverse the reductions to get $G= T_1^{\ast} \sqcup T_2^{\ast}$, where 
\begin{align*}
E(T_1^{\ast}) &= E(S_1) - xw_1 - vw_2 + uu_1 + vw +ww_2,\\
E(T_2^{\ast}) &= E(S_2) -xu_1 -vu_2+ vu + uu_2 + ww_1,
\end{align*}
as in Figure~\ref{fig:critrec2iib}.
By edge swapping $uu_1$ and $ww_1$ and noting that $u$, $w$ are leaves in $T_1^{\ast}$, $T_2^{\ast}$ respectively, we obtain the double tree decomposition $G= T_1'\sqcup T_2'$, where
\begin{align*}
E(T_1') &= E(T_1^{\ast}) - uu_1 - vw + uv + ww_1,\\
E(T_2') &= E(T_2^{\ast}) - uv - ww_1 + uu_1 + vw.
\end{align*}
Vertices $v$, $u_1$, $u_2$ are balanced in $S_1\sqcup S_2$, hence with respect to $T_1'\sqcup T_2'$ as well, as degree differences remained unchanged.
All other degree differences at big vertices were preserved, so $G=T_1'\sqcup T_2'$ is balanced, a contradiction.

\textbf{(iii)} Let $v\in V(G)$ be critical, $G=T_1\sqcup T_2$ be a double tree decomposition and suppose that $u,w\in \Gamma(v)$ are a $2$-vertex and a poor $3$-vertex, respectively.
Let $v_3$ be the small neighbour of $w$, and $v_1,v_2\neq v$ the other big neighbours of $u$, $w$ respectively.
By symmetry we may assume that $w$ is a leaf in $T_2$. By Lemma~\ref{fact:poor3}, we may swap edges to ensure that $wv_2\in E(T_2)$.
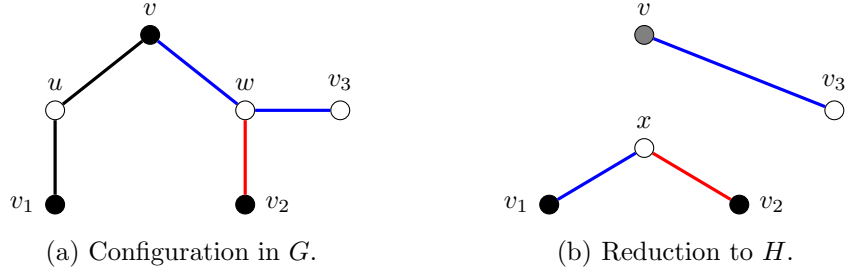
\begin{figure}[ht]
	\centering
	\begin{subfigure}{0.4\textwidth}
    \centering
	\begin{tikzpicture}
	[scale=.5,auto=center,circ/.style={circle,draw, minimum size=1pt, inner sep=2.5pt, fill=black}, >=stealth]
    	\node[circ, label={\small $v$}] (v) at (0,0) {};
    	\node[circ, label={180:\small $v_1$}] (v1) at (-2.5,-4.5) {};
    	\node[circ, label={0:\small $v_2$}] (v2) at (2.5,-4.5) {};
    	\node[circ, fill=white, label={\small $v_3$}] (v3) at (5,-2) {};
    	\node[circ, fill=white, label={\small $u$}] (u) at (-2.5,-2) {};
    	\node[circ, fill=white, label={\small $w$}] (w) at (2.5,-2) {};
    	\begin{scope}[every edge/.style={very thick,draw=red}]
    	\draw (v2) edge (w);
    	\end{scope}
    	\begin{scope}[every edge/.style={very thick,draw=blue}]
    	\draw (v) edge (w);
    	\draw (v3) edge (w);
    	\end{scope}
    	\begin{scope}[every edge/.style={very thick,draw=black}]
    	\draw (v) edge (u);
    	\draw (v1) edge (u);
    	\end{scope}
	\end{tikzpicture}
	\caption{Configuration in $G$.}
    \end{subfigure}
	\begin{subfigure}{0.4\textwidth}
    \centering
	\begin{tikzpicture}
	[scale=.5,auto=center,circ/.style={circle,draw, minimum size=1pt, inner sep=2.5pt, fill=black}, >=stealth]
    	\node[circ, fill=gray, label={\small $v$}] (v) at (0,0) {};
    	\node[circ, label={180:\small $v_1$}] (v1) at (-2.5,-4.5) {};
    	\node[circ, label={0:\small $v_2$}] (v2) at (2.5,-4.5) {};
    	\node[circ, fill=white, label={\small $v_3$}] (v3) at (5,-2) {};
    	\node[circ, fill=white, label={\small $x$}] (x) at (0,-3) {};
    	\begin{scope}[every edge/.style={very thick,draw=red}]
    	\draw (v2) edge (x);
    	\end{scope}
    	\begin{scope}[every edge/.style={very thick,draw=blue}]
    	\draw (v3) edge (v);
    	\draw (v1) edge (x);
    	\end{scope}
    	\begin{scope}[every edge/.style={very thick,draw=black}]
    	\end{scope}
	\end{tikzpicture}
	\caption{Reduction to $H$.}
	\label{fig:redcrit23}
    \end{subfigure}
	\caption{Reduction step in Lemma~\ref{lem:7verts}.(iii).}
\end{figure}
Apply the standard reduction for $2$-vertices and $3$-vertices to $u$ and $w$ respectively, and add a new $2$-vertex $x$ joined to $v_1$ and $v_2$, yielding a double tree $H$ as shown in Figure~\ref{fig:redcrit23}. By induction, $H$ has a balanced double tree decomposition $H=S_1\sqcup S_2$.
By symmetry we may assume that $vv_3\in E(S_1)$.
We treat two cases separately.

\textit{Case 1.} 
$xv_2\in E(S_2)$.

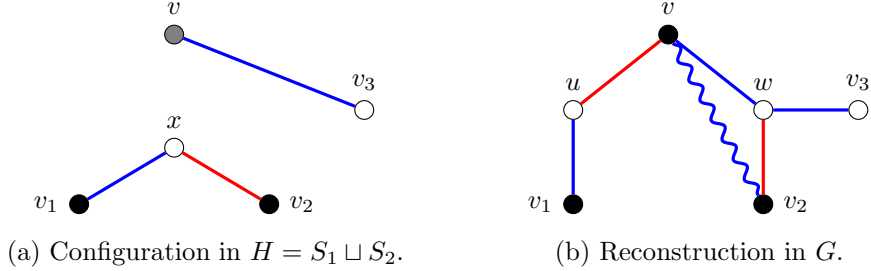
\begin{figure}[ht]
	\centering
    \begin{subfigure}{0.4\textwidth}
    \centering
	\begin{tikzpicture}
	[scale=.5,auto=center,circ/.style={circle,draw, minimum size=1pt, inner sep=2.5pt, fill=black}, >=stealth]
	\begin{scope}
    	\node[circ, fill=gray, label={\small $v$}] (v) at (0,0) {};
    	\node[circ, label={180:\small $v_1$}] (v1) at (-2.5,-4.5) {};
    	\node[circ, label={0:\small $v_2$}] (v2) at (2.5,-4.5) {};
    	\node[circ, fill=white, label={\small $v_3$}] (v3) at (5,-2) {};
    	\node[circ, fill=white, label={\small $x$}] (x) at (0,-3) {};
    	\begin{scope}[every edge/.style={very thick,draw=red}]
    	\draw (v2) edge (x);
    	\end{scope}
    	\begin{scope}[every edge/.style={very thick,draw=blue}]
    	\draw (v3) edge (v);
    	\draw (v1) edge (x);
    	\end{scope}
    	\begin{scope}[every edge/.style={very thick,draw=black}]
    	\end{scope}
	\end{scope}
	\end{tikzpicture}
	\caption{Configuration in $H = S_1\sqcup S_2$.} 
    \end{subfigure}
    \begin{subfigure}{0.4\textwidth}
    \centering
	\begin{tikzpicture}
	[scale=.5,auto=center,circ/.style={circle,draw, minimum size=1pt, inner sep=2.5pt, fill=black}, >=stealth]
	
	\begin{scope}
    	\node[circ, label={\small $v$}] (v) at (0,0) {};
    	\node[circ, label={180:\small $v_1$}] (v1) at (-2.5,-4.5) {};
    	\node[circ, label={0:\small $v_2$}] (v2) at (2.5,-4.5) {};
    	\node[circ, fill=white, label={\small $v_3$}] (v3) at (5,-2) {};
    	\node[circ, fill=white, label={\small $u$}] (u) at (-2.5,-2) {};
    	\node[circ, fill=white, label={\small $w$}] (w) at (2.5,-2) {};
    	\begin{scope}[every edge/.style={very thick,draw=red}]
    	\draw (v2) edge (w);
    	\draw (v) edge (u);
    	\end{scope}
    	\begin{scope}[every edge/.style={very thick,draw=blue}]
    	\draw (v) edge (w);
    	\draw (v3) edge (w);
    	\draw (v1) edge (u);
    	\draw (v) edge[ snake=snake,
    	        segment amplitude=.6mm,
	            segment length=3mm,
	            line after snake=0mm] (v2);
    	\end{scope}
    	\begin{scope}[every edge/.style={very thick,draw=black}]
    	\end{scope}
	\end{scope}
	\end{tikzpicture}
	\caption{Reconstruction in $G$.}
	\label{fig:reccrit31b}
    \end{subfigure}
	\caption{Reconstruction step in Case 1.}
\end{figure}

Reverse the reductions to get $G= T_1'\sqcup T_2'$, where
\begin{align*}
E(T_1') &= E(S_1) - xv_1 - vv_3 + vw + wv_3 +uv_1,\\
E(T_2') &= E(S_2) - xv_2 + wv_2 + uv,
\end{align*}
as in Figure~\ref{fig:reccrit31b}.
We claim that every vertex in $T_1'\sqcup T_2'$ is balanced.
All degree differences are unchanged except at \(v\) where a red edge has been added.
By Lemma~\ref{fact:poor3}, there is a path in $G$ from $v_2$ to $v$ in $T_1'$ that does not pass through $v_3$, so $d_{T_1'}(v) \geq 2$.
As $v$ is critical, it is balanced, as required.

\emph{Case 2.}
\(xv_2 \in E(S_1)\)
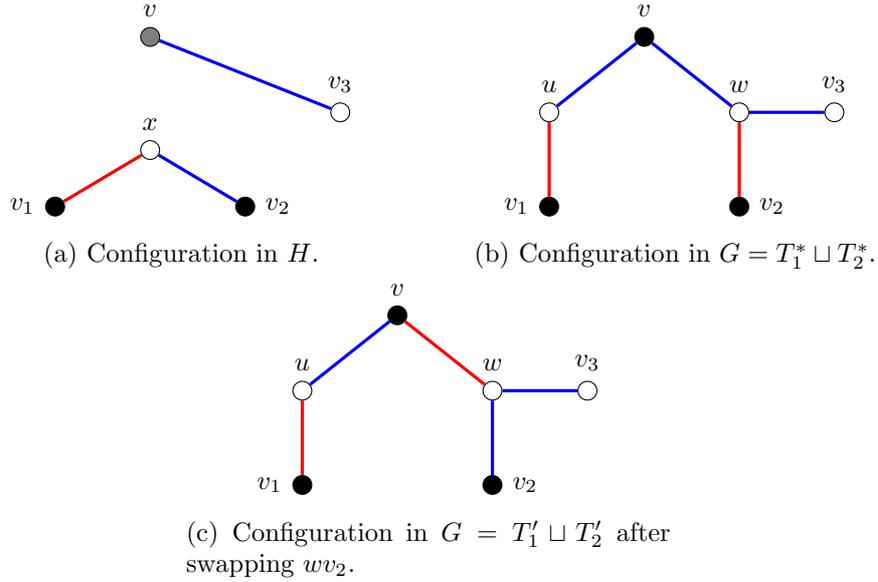
\begin{figure}[ht] 
	\centering
	\begin{subfigure}{0.4\textwidth}
    \centering
	\begin{tikzpicture}
	[scale=.5,auto=center,circ/.style={circle,draw, minimum size=1pt, inner sep=2.5pt, fill=black}, >=stealth]
    	\node[circ, fill=gray, label={\small $v$}] (v) at (0,0) {};
    	\node[circ, label={180:\small $v_1$}] (v1) at (-2.5,-4.5) {};
    	\node[circ, label={0:\small $v_2$}] (v2) at (2.5,-4.5) {};
    	\node[circ, fill=white, label={\small $v_3$}] (v3) at (5,-2) {};
    	\node[circ, fill=white, label={\small $x$}] (x) at (0,-3) {};
    	\begin{scope}[every edge/.style={very thick,draw=red}]
    	\draw (v1) edge (x);
    	\end{scope}
    	\begin{scope}[every edge/.style={very thick,draw=blue}]
    	\draw (v3) edge (v);
    	\draw (v2) edge (x);
    	\end{scope}
    	\begin{scope}[every edge/.style={very thick,draw=black}]
    	\end{scope}
	\end{tikzpicture}
	\caption{Configuration in $H$.} 
	\end{subfigure}
	\begin{subfigure}{0.4\textwidth}
    \centering
	\begin{tikzpicture}
	[scale=.5,auto=center,circ/.style={circle,draw, minimum size=1pt, inner sep=2.5pt, fill=black}, >=stealth]
    	\node[circ, label={\small $v$}] (v) at (0,0) {};
    	\node[circ, label={180:\small $v_1$}] (v1) at (-2.5,-4.5) {};
    	\node[circ, label={0:\small $v_2$}] (v2) at (2.5,-4.5) {};
    	\node[circ, fill=white, label={\small $v_3$}] (v3) at (5,-2) {};
    	\node[circ, fill=white, label={\small $u$}] (u) at (-2.5,-2) {};
    	\node[circ, fill=white, label={\small $w$}] (w) at (2.5,-2) {};
    	\begin{scope}[every edge/.style={very thick,draw=red}]
    	\draw (v2) edge (w);
    	\draw (v1) edge (u);
    	\end{scope}
    	\begin{scope}[every edge/.style={very thick,draw=blue}]
    	\draw (v) edge (u);
    	\draw (v3) edge (w);
    	\draw (v) edge (w);
    	        segment amplitude=.6mm,
	            segment length=3mm,
	            line after snake=0mm] (v2);
    	\end{scope}
    	\begin{scope}[every edge/.style={very thick,draw=black}]
    	\end{scope}
	\end{tikzpicture}
	\caption{Configuration in $G= T_1^{\ast}\sqcup T_2^{\ast}$.} 
	\label{fig:reccrit32iib}
	\end{subfigure}
	\begin{subfigure}{0.4\textwidth}
    \centering
	\begin{tikzpicture}
	[scale=.5,auto=center,circ/.style={circle,draw, minimum size=1pt, inner sep=2.5pt, fill=black}, >=stealth]
    	\node[circ, label={\small $v$}] (v) at (0,0) {};
    	\node[circ, label={180:\small $v_1$}] (v1) at (-2.5,-4.5) {};
    	\node[circ, label={0:\small $v_2$}] (v2) at (2.5,-4.5) {};
    	\node[circ, fill=white, label={\small $v_3$}] (v3) at (5,-2) {};
    	\node[circ, fill=white, label={\small $u$}] (u) at (-2.5,-2) {};
    	\node[circ, fill=white, label={\small $w$}] (w) at (2.5,-2) {};
    	\begin{scope}[every edge/.style={very thick,draw=red}]
    	\draw (v) edge (w);
    	\draw (v1) edge (u);
    	\end{scope}
    	\begin{scope}[every edge/.style={very thick,draw=blue}]
    	\draw (v) edge (u);
    	\draw (v3) edge (w);
    	\draw (v2) edge (w);
    	        segment amplitude=.6mm,
	            segment length=3mm,
	            line after snake=0mm] (v2);
    	\end{scope}
    	\begin{scope}[every edge/.style={very thick,draw=black}]
    	\end{scope}
	\end{tikzpicture}
	\caption{Configuration in $G=T_1'\sqcup T_2'$ after swapping $wv_2$.} \label{fig:reccrit32iic}
	\end{subfigure}
	\caption{Reconstruction in Case 2.}
\end{figure}

Reverse the reductions to get $G = T_1^{\ast}\sqcup T_2^{\ast}$, where
\begin{align*}
E(T_1^{\ast}) &= E(S_1) - xv_2 - vv_3 + uv + wv_3+ vw ,\\
E(T_2^{\ast}) &= E(S_2) - xv_1 + uv_1 +wv_2,
\end{align*}
as in Figure~\ref{fig:reccrit32iib}.
After swapping $wv_2$ we obtain, by Lemma~\ref{fact:poor3}, the decomposition $G = T_1'\sqcup T_2'$ shown in Figure~\ref{fig:reccrit32iic}.
Since $u$ and $w$ are both leaves in $T_1'$, $T_2'$ respectively with $vu\in E(T_1')$ and $vw\in E(T_2')$, we have $d_{T_i'}(v) \geq 2$ for $i=1,2$ and therefore $v$ is balanced as it is critical.
All other degree differences at big vertices remain unchanged. Hence, $G=T_1'\sqcup T_2'$ is balanced, a contradiction.
\end{proof}

\subsection{Discharging}\label{subsec:discharge}
In this section we conclude the proof of Theorem~\ref{thm:2trees} by applying the lemmas above with $c = 4$.
\begin{proof}[Proof of Theorem~\ref{thm:2trees}] 
Let $G$ be a counterexample minimising the number of vertices $n$.
Define the initial charge function $f\colon V \to \Q$ by $f(v) = d(v)$ and the \emph{discharging procedure} as follows.
For each big vertex $v$ and each edge $vu$ it is incident to, send to the vertex $u$
\begin{itemize}[noitemsep]
\item charge $1$ if $u$ is a $2$-vertex,
\item charge $1/2$ if $u$ is a poor $3$-vertex, 
\item charge $1/2$ if $u$ is a bad $3$-vertex (note that a bad $3$-vertex receives a total charge of $1$ from $v$ because of the double edge), 
\item charge $1/3$ if $u$ is a rich $3$-vertex.
\end{itemize}
Let $g\colon V\to \Q$ be the charge function after the discharging procedure has taken place.
Then 
\begin{equation*}
\sum_{v\in V(G)} g(v) = \sum_{v\in V(G)} f(v) = 4 n -  4.
\end{equation*}
We claim that every vertex $v$ of $G$ has $g(v) \geq 4$, which will give a contradiction.

Indeed, 
if $d(v)\geq 9$, then, by Lemma~\ref{lem:atmostone2vert}, $v$ is adjacent to at most one $2$-vertex and therefore
\begin{equation*}
g(v) \geq 9 - (1 + 8\cdot \tfrac{1}{2}) = 4.
\end{equation*}
If $d(v) = 8$, then, by Lemma~\ref{lem:2verts}, $v$ cannot be adjacent to any $2$-vertices and therefore
\begin{equation*}
g(v) \geq 8 - 8\cdot\tfrac{1}{2}  = 4.
\end{equation*}
If $d(v) = 7$, we distinguish two cases.
\begin{itemize}[noitemsep]
\item If $\Gamma(v)$ contains only small vertices, then by Lemma~\ref{lem:7verts}.(i), $v$ does not have bad vertices in its neighbourhood, and by Lemmas~\ref{lem:2verts}, \ref{lem:7verts}.(ii), \ref{lem:7verts}.(iii), $v$ has at most one neighbour that is a $2$-vertex or a poor $3$-vertex.
Therefore,
$g(v) \geq 7- 1 - 6\cdot 1/3 = 4$.
\item If $\Gamma(v)$ has a big vertex, using Lemmas~\ref{lem:2verts}, \ref{lem:badvertices}, \ref{lem:7verts}.(ii), \ref{lem:7verts}.(iii), we similarly get that
$g(v) \geq 7- 1 - 2\cdot 1/2 - 3\cdot 1/3 =4$.
\end{itemize}

If $d(v) \in \{4,5,6\}$, then $g(v) = d(v) \geq 4$.

If $d(v) =3$, there are two cases.
\begin{enumerate}[noitemsep]
\item If $v$ is a rich $3$-vertex, it receives a charge of $1/3$ from each of its edges, thus $g(v) = 3+ 3\cdot 1/3 = 4$.
\item If $v$ is a poor or bad $3$-vertex, it receives a charge of $1/2$ from two of its edges, thus $g(v) = 3 + 2 \cdot 1/2 = 4$.
\end{enumerate}
If $d(v) = 2$, then by Lemma~\ref{lem:2verts}, $v$ receives a charge of $1$ from both its neighbours and thus $g(v) = 4$, as required.
\end{proof}

\section{General graphs}\label{sec:general}
In this section we write $G= A+M$ to mean that $G = A\sqcup M$ where $A$ is a spanning double tree and $M$ a graph.

We deduce Theorem~\ref{thm:2treesgeneral} from a slightly more general statement.
\begin{theorem}\label{claim:general}
Let $G = A + M$. 
Then $G$ admits a $4$-balanced decomposition into subgraphs $G=G_1\sqcup G_2$ such that
$(A\cap G_1) \sqcup (A \cap G_2)$ is a double tree decomposition of $A$.
\end{theorem}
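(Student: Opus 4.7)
The plan is to build on Theorem~\ref{thm:2trees} by first decomposing the spanning double tree $A$ and then distributing the extra edges of $M$. A first attempt is: apply Theorem~\ref{thm:2trees} to $A$ to obtain a $4$-balanced decomposition $A = T_1 \sqcup T_2$, set $T_i \subseteq G_i$, and assign each $e \in M$ to either $G_1$ or $G_2$. Writing $s_A(v) := d_{T_1}(v) - d_{T_2}(v) \in [-4, 4]$ and $F := E(M) \cap G_1$, the balance condition $|d_{G_1}(v) - d_{G_2}(v)| \leq 4$ becomes $d_F(v) \in [L(v), R(v)]$, where $L(v) := \lceil (d_M(v) - s_A(v) - 4)/2 \rceil$ and $R(v) := \lfloor (d_M(v) - s_A(v) + 4)/2 \rfloor$. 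This is a degree-constrained subgraph problem on $M$, with an integer interval of length at least $3$ at each vertex, centred at $d_M(v)/2$.

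Feasibility of the fractional relaxation is immediate ($x_e = 1/2$ works since $|s_A(v)| \leq 4$ forces $d_M(v)/2 \in [L(v), R(v)]$), and the folklore Euler-tour $2$-colouring of $M$ (adjoin a dummy vertex joined to all odd-degree vertices, take an Euler circuit, alternate colours) produces an integral $F$ with $|2d_F(v) - d_M(v)| \leq 1$, giving total imbalance $|s(v)| \leq 5$ at every vertex, one over target.

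The main obstacle has two parts. First, tightening $|s(v)| \leq 5$ to $|s(v)| \leq 4$ requires that $s_M(v) := 2d_F(v) - d_M(v)$ have the opposite sign to $s_A(v)$ at every vertex with $|s_A(v)| = 4$; this can be achieved by flipping the colouring along alternating paths in $M$ that swap signs between pairs of wrongly-signed vertices, provided a parity/feasibility condition holds within each component of $M$. Second, and more seriously, a bad initial choice of $A$-decomposition can be genuinely incompatible with $M$: for example, if an $M$-edge $uv$ has $s_A(u) = 4$ and $s_A(v) = -4$, no colouring of that single edge respects both constraints. To handle this, the $A$-decomposition must be chosen with $M$ in mind. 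The cleanest route I would attempt is to adapt the minimal-counterexample and discharging arguments of Section~\ref{sec:main}: a minimal counterexample $G = A + M$ to the statement should satisfy analogues of the structural constraints proved in Lemmas~\ref{lem:2verts}--\ref{lem:7verts}, with the $M$-edges providing extra flexibility in the reductions (they can be freely placed in $G_1$ or $G_2$ without disturbing the spanning-tree constraint on $A$), and a discharging argument mirroring that of Section~\ref{subsec:discharge} should then yield the required contradiction.
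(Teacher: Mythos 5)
Your first route (fix a $4$-balanced double tree decomposition of $A$ via Theorem~\ref{thm:2trees}, then $2$-colour $M$ on top) is, as you yourself note, a dead end: the Euler-tour argument only yields imbalance $5$, and the correction step cannot be done independently of the choice of decomposition of $A$, since an $M$-edge joining two vertices with $s_A = +4$ and $s_A = -4$ is already fatal. So the burden of the proof falls entirely on your fallback plan, and there the proposal has a genuine gap: it names the right framework (minimal counterexample plus discharging, as in Section~\ref{sec:main}) but supplies none of the new content that makes it work for $G = A + M$. Concretely, the paper's proof needs (i) a modified minimality: one minimises $e(M)$ \emph{first} and only then $|G|$, which is what licenses reductions that delete or relocate $M$-edges; (ii) structural lemmas about $M$ itself, proved by such reductions: every $M$-edge meets only big vertices whose degree is $\equiv c+1 \bmod 2$ (Lemma~\ref{lem:Mparity}), $M$ is a matching (Lemma~\ref{lem:Mmatching}), and a vertex incident to an $M$-edge is adjacent to no $2$-vertex, and if critical to no poor $3$-vertex (Lemmas~\ref{lem:M2verts} and~\ref{lem:Mcrit}); and (iii) the observation that, because of (ii), all the reductions of Section~\ref{sec:main} only touch edges at small vertices, hence edges of $A$, so Lemmas~\ref{lem:2verts}--\ref{lem:7verts} remain valid verbatim in the new setting. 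None of these appear in your sketch, and "the $M$-edges provide extra flexibility" does not by itself rule out, say, a vertex incident to many $M$-edges or an $M$-edge at a small vertex, either of which would wreck the counting.

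The discharging also cannot simply "mirror" Section~\ref{subsec:discharge}: if you keep the initial charge $f(v) = d_G(v)$, the total charge becomes $4n-4+2e(M)$ and the contradiction $\sum_v g(v) \geq 4n$ no longer follows from $g(v)\geq 4$. The paper instead sets $f(v) = d_A(v)$, so the total stays $4n-4$, and then must re-verify $g(v)\geq 4$ at every $M$-incident vertex, where one unit of degree has been lost; this verification is exactly where the matching property, the parity/bigness of $M$-incident vertices, and the exclusion of $2$-vertices and poor $3$-vertices from their neighbourhoods (together with the strengthened Lemma~\ref{lem:Mbad}) are consumed. In short, your plan points in the direction of the actual proof, but the statements that carry the argument are neither formulated nor proved, so as it stands the proposal does not establish Theorem~\ref{claim:general}.
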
 
Theorem~\ref{claim:general} follows from similar arguments to those used for Theorem~\ref{thm:2trees}, with suitable modifications.

Fix an integer $c\geq 2$ and suppose there are graphs $G = A + M$ with no \(c\)-balanced decomposition $G=G_1\sqcup G_2$ such that
$A\cap G_1$ and $A \cap G_2$ are trees. We take \(G\) to be such a graph where
\begin{enumerate}[noitemsep]
\item $e(M)$ is minimal,
\item subject to this, $|G|$ is minimal.
\end{enumerate}
Again, define $v\in V(G)$ to be \emph{big} if $d_G(v) \geq c+3$ and \emph{small} otherwise. 
If $d_G(v) = c+3$ we again call $v$ \emph{critical}.

\textit{In figures, edges of $M$ are dashed.}
\subsection{Edges of \texorpdfstring{$M$}{M}}
\begin{lemma}\label{lem:Mparity} 
If $e\in E(M)$ is incident to a vertex $v$, then $v$ is big 
and $d_G(v) \equiv c+1\bmod{2}$.
\end{lemma}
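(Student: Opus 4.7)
The plan is to mimic the argument for Lemma~\ref{lem:2verts}, but exploiting the minimality of $e(M)$ rather than $|G|$. The basic reduction step is to delete the edge $e = vw$ from $M$: this produces $G - e = A + (M - e)$, which has strictly fewer $M$-edges and leaves the spanning double tree $A$ untouched. By the first minimality condition, $G - e$ admits a $c$-balanced decomposition $G - e = G_1' \sqcup G_2'$ with $A \cap G_i'$ spanning trees. To recover a decomposition of $G$, one simply chooses a colour for $e$; since $e \notin E(A)$, the tree property of $A \cap G_i$ is preserved regardless of which colour is chosen. So the whole argument reduces to picking this colour judiciously.

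First I would show $v$ is big. If $v$ is small, then $d_G(v) \leq c + 2$, and since $A \cap G_i$ is spanning for any decomposition we have $d_{G_i}(v) \geq 1$, so
\[
|d_{G_1}(v) - d_{G_2}(v)| \leq d_G(v) - 2 \leq c
\]
automatically. It is therefore safe to add $e$ back in the colour in which $w$ has the smaller $G_i'$-degree: this keeps $w$ balanced (its degree difference either drops by $1$ or becomes $1$), and $v$ balances for free. This yields a $c$-balanced decomposition of $G$, contradicting the choice of $G$.

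Next I would establish the parity claim by assuming $d_G(v) \equiv c \pmod{2}$. Then
\[
d_{G_1'}(v) + d_{G_2'}(v) = d_G(v) - 1 \equiv c - 1 \pmod{2},
\]
so $d_{G_1'}(v) - d_{G_2'}(v)$ has parity opposite to $c$. Combined with the $c$-balancedness of $G_1' \sqcup G_2'$, this forces $|d_{G_1'}(v) - d_{G_2'}(v)| \leq c - 1$, giving one unit of slack at $v$. Adding $e$ in either colour therefore preserves balance at $v$, so I can again pick the colour in which $w$ has the smaller $G_i'$-degree, which balances $w$. This produces a $c$-balanced decomposition of $G$, a contradiction.

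The proof is essentially routine; there is no substantial obstacle. The one point worth flagging is the observation that the tree constraint on $A$ is preserved automatically when reassigning $e \in E(M)$, which is exactly why minimising $e(M)$ first (and only then $|G|$) is the correct choice of induction parameter and why an analogue of the standard $2$-vertex reduction works here without ever needing to delete a vertex.
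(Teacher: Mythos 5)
Your proposal is correct and follows essentially the same route as the paper: delete the $M$-edge, invoke minimality of $e(M)$ to rebalance, then reinsert the edge in the colour that keeps the other endpoint balanced, observing that $v$ is automatically balanced if it is small (both tree-degrees are at least $1$) or has a unit of parity slack if $d_G(v)\equiv c \bmod 2$. The paper merely packages the two cases as a single contradiction (``$v$ cannot be balanced, yet it is''), so there is no substantive difference.
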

\begin{proof}
Let $uv\in E(M)$.
Remove $uv$, rebalance the resulting graph using minimality of $G$ and add $uv$ to the appropriate part so that $u$ is balanced in the resulting decomposition $G=G_1\sqcup G_2$.
By construction we further have that $(A\cap G_1 ) \sqcup (A\cap G_2)$ is a double tree decomposition.
All degree differences have been preserved at vertices of $G$ other than $u$ or $v$, and $u$ is balanced in $G_1\sqcup G_2$ by construction. Hence, the vertex \(v\) cannot be balanced in \(G_1 \sqcup G_2\).

If $v$ is small, then $v$ is clearly balanced in $G=G_1\sqcup G_2$.
If $d_{G}(v) \equiv c \bmod{2}$, then a parity argument similar to that of Lemma~\ref{lem:2verts} shows that $v$ is balanced.
Thus, neither can occur.
\end{proof}
As a consequence, the edges of $M$ are not incident to any $3$-vertices.
We will use the terminology of rich, poor and bad \(3\)-vertices defined in Section~\ref{sec:3verts}. When we do edge swaps we will do them within the double tree \(A\).

Note that all edges appearing in Lemmas~\ref{lem:2verts}, \ref{lem:atmostone2vert} and \ref{lem:3vertsbig}-\ref{lem:7verts} are incident to a small vertex and so are in the double tree $A$ by Lemma~\ref{lem:Mparity}.
Hence these lemmas all still hold in \(G\).
Indeed, reductions and reconstructions are unchanged when the vertices involved edges are in $M$.
For our purposes we require a slight strengthening of Lemma~\ref{lem:7verts}.(i) (this follows immediately from the proof of Lemma~\ref{lem:7verts} when applied in this context).
\begin{lemma}\label{lem:Mbad} 
Let $v\in V(G)$ be a critical vertex.
If all neighbours of $v$ in $A$ are small, then $v$ is not adjacent to any bad $3$-vertex.
\end{lemma}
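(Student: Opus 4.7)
The plan is to mirror the proof of Lemma~\ref{lem:7verts}.(i), exploiting the fact that every reduction, reconstruction, and edge swap in that argument takes place entirely inside the double tree $A$, so the $M$-edges do not interfere.

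I would suppose for contradiction that $v$ is critical with all $A$-neighbours small, and that a bad $3$-vertex $u$ is adjacent to $v$ via a double edge; let $w \neq v$ be the small neighbour of $u$. By Lemma~\ref{lem:Mparity} the three edges at $u$ all lie in $A$. Applying the standard $3$-vertex reduction at $u$ yields $H = A_H + M$ with $\lvert H\rvert = \lvert G\rvert - 1$ and $e(M)$ unchanged, so the double minimality of $G$ provides a $c$-balanced decomposition $H = S_1 \sqcup S_2$ whose $A_H$-parts are spanning trees. Relabel so that $vw \in S_1$. Note that $d_H(v) = d_G(v) - 1 = c+2$, so $v$ is small in $H$ and hence balanced, giving $d_{S_1}(v), d_{S_2}(v) \geq 1$.

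Next I would split into the two cases of Lemma~\ref{lem:7verts}.(i). If $d_{S_1}(v) \geq 2$, the reconstruction $T_1' = S_1 - vw + vu + uw$ and $T_2' = S_2 + vu$ gives a decomposition of $G$ with $A\cap T_1'$ and $A\cap T_2'$ spanning trees; at $v$ the colour-degrees are $d_{S_1}(v) \geq 2$ and $d_{S_2}(v) + 1 \geq 2$, so the critical vertex $v$ is balanced, and no other degree difference changes. If $d_{S_1}(v) = 1$, then $vw$ is the unique $S_1$-edge at $v$, and I would first swap it inside $A_H$: by Lemma~\ref{lem:edgeswap} the swap partner is incident to $v$, and its other endpoint is an $A_H$-neighbour of $v$ and so small by hypothesis. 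The resulting $c$-balanced decomposition $S_1' \sqcup S_2'$ has $vw \in S_2'$ with $d_{S_2'}(v) = c+1 \geq 2$, reducing to the previous case with $S_1$ and $S_2$ interchanged.

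The one novelty compared with Lemma~\ref{lem:7verts}.(i) is that $v$ may have $M$-neighbours, which (by Lemma~\ref{lem:Mparity}) are necessarily big. This is precisely the potential obstacle, but it is mild: every step above operates inside $A$, and the swap in the second case alters degrees only at $v$ and at one $A_H$-neighbour of $v$, which is small by assumption. Consequently the balancedness of every big $M$-neighbour of $v$ is inherited directly from $S_1 \sqcup S_2$, and the argument closes exactly as in Lemma~\ref{lem:7verts}.(i).
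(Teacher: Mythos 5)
Your proposal is correct and matches the paper's intent exactly: the paper proves this lemma by noting it "follows immediately from the proof of Lemma~\ref{lem:7verts}.(i) when applied in this context", and your write-up carries out precisely that adaptation (all moves stay inside the double tree, the only big vertex whose colour-degrees change is the critical vertex $v$, and the Case~2 swap partner is incident to $v$ with a small other endpoint). The verification that the $M$-edges never interfere and that big $M$-neighbours of $v$ inherit balance is exactly the detail the paper leaves implicit.
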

Edges of $M$ are subject to further constraints, which we will need in the discharging argument.
\begin{lemma}\label{lem:Mmatching}
The subgraph $M$ is a matching.
\end{lemma}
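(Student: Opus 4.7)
The plan is a contradiction argument. Suppose some vertex $v \in V(G)$ is incident to two distinct edges $e_1, e_2$ of $M$. Lemma~\ref{lem:Mparity} forces $v$ and the other endpoints of $e_1, e_2$ to be big with degree $\equiv c+1 \pmod{2}$ in $G$. I would split on whether $e_1, e_2$ are parallel.

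If $e_1, e_2$ are parallel, say both equal $vu$, then deleting both from $G$ gives a graph $H$ with $A$ still a spanning double tree and $e(M_H) = e(M) - 2$. Minimality of $e(M)$ yields a $c$-balanced decomposition $H = H_1 \sqcup H_2$ with $A \cap H_i$ both trees. Reinserting one copy of $vu$ into each part preserves every degree difference (each $H_i$ gains one edge at both $u$ and $v$), and hence the resulting decomposition of $G$ is $c$-balanced with $A \cap G_i$ trees, contradicting that $G$ is a counterexample.

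Otherwise $e_1 = vu$ and $e_2 = vw$ with $u \ne w$. Again remove both and invoke minimality to obtain $H = H_1 \sqcup H_2$. Writing $\delta_x = d_{H_1}(x) - d_{H_2}(x)$ for the signed imbalance, the parities given by Lemma~\ref{lem:Mparity} force $\delta_u, \delta_w \equiv c \pmod{2}$ and $\delta_v \equiv c+1 \pmod{2}$; in particular $\lvert \delta_v \rvert \leq c-1$. Placing $vu, vw$ in opposite trees leaves $\delta_v$ unchanged and shifts $\delta_u, \delta_w$ each by $\pm 1$, which balances $u, v, w$ unless $\lvert \delta_u \rvert = \lvert \delta_w \rvert = c$ and the two have matching signs, forcing both edges into the same tree. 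In the residual bad sub-case, the forced common placement shifts $\delta_v$ by $\pm 2$, which exceeds $c$ only when $\delta_v = \mp(c-1)$.

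To clear the bad sub-case I would apply Lemma~\ref{lem:edgeswap} to swap an $A$-edge incident to $u$ between $T_1 = A \cap H_1$ and $T_2 = A \cap H_2$, decreasing $\lvert \delta_u \rvert$ by $2$ and shifting the imbalance to the endpoints of the two $A$-edges involved in the swap; using the parity and the bound $\lvert \delta \rvert \leq c$ at the other vertices, such a swap can be found without violating balance elsewhere. Once $\lvert \delta_u \rvert < c$, the placement at $u$ is free, so we can put $vu, vw$ in opposite trees and keep $\lvert \delta_v \rvert \leq c-1$, producing a $c$-balanced decomposition of $G$ and the sought contradiction. The main obstacle is this residual sub-case: exhibiting a suitable edge swap in $A$ at $u$ requires simultaneous control of the degree parities at the four vertices involved in the swap.
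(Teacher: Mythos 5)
Your parallel-edge case and the parity analysis in the non-parallel case are fine: after deleting $uv,vw$ you correctly get $\delta_v\equiv c+1$, $\delta_u\equiv\delta_w\equiv c\pmod 2$, hence $|\delta_v|\le c-1$, and placing the two edges in opposite parts succeeds unless $\delta_u=\delta_w=\pm c$ with the same sign and $\delta_v=\mp(c-1)$. The genuine gap is exactly the residual sub-case you flag. Your proposed repair -- swapping an $A$-edge at $u$ to reduce $|\delta_u|$ -- does not go through as stated: when you swap an edge $e\in T_1$ incident to $u$, the partner edge $f\in T_2$ is forced by the tree structure (it must lie on the cycle of $T_2+e$ and cross the cut of $T_1-e$), so you have essentially no control over where the compensating degree changes land. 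The other endpoint of $e$, and the endpoints of $f$, can themselves sit at imbalance $\pm c$ (their degrees need not have the favourable parity, since they are arbitrary vertices of $G$, not $M$-endpoints), and a shift of $2$ at such a vertex breaks balance with no mechanism to recover. The claim that ``such a swap can be found without violating balance elsewhere'' is precisely what would need proving, and nothing in Lemma~\ref{lem:edgeswap} (which only concerns leaves) or the parity bookkeeping supplies it.

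The paper sidesteps this case analysis entirely with a gadget: delete $uv,vw$ from $M$, but simultaneously add a new $2$-vertex $x$ joined to $u$ and $w$ \emph{inside the double tree}, so $H=B+N$ with $B$ still a double tree and $e(N)<e(M)$. In any double tree decomposition of $B$ the $2$-vertex $x$ has one edge in each tree, so the balanced decomposition of $H$ already accounts for one extra edge at $u$ in one part and one extra edge at $w$ in the other. Reconstructing by replacing $ux$ with $uv$ in the same part and $xw$ with $vw$ in the other preserves every degree difference at $u$ and $w$ exactly and adds one edge of each colour at $v$, so balance is automatic -- no parity argument, no bad sub-case, and the tree structure survives because $x$ was a leaf of both trees. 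If you want to salvage your route, you would need to prove an analogue of that forcing by hand; as written, the argument is incomplete at its crux.
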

\begin{proof}
Let $u,v,w\in V(G)$ and suppose that $uv, vw\in E(M)$.
Then define $H = B  + N$ 
by deleting edges $uv$, $vw$ from $M$ (to give $N$), and by adding a new $2$-vertex $x$ joined to both $u$ and $w$ in the double tree (to give $B$).
Then $e(N)<e(M)$ and so by minimality we may find a balanced decomposition $H = H_1 \sqcup H_2$, where $(B\cap H_1) \sqcup (B\cap H_2)$ is a double tree.
By symmetry we may assume $ux \in E( H_1)$, $xw \in E(H_2)$.
Define $G=  G_1\sqcup G_2$ where $E(G_1) = E(H_1) -ux + uv$ and $E(G_2) = E(H_2) - xw + vw$.
Then $G= G_1\sqcup G_2$ is a balanced decomposition as degree differences have been preserved.
Also, $(A\cap G_1)\sqcup (A\cap G_2)$ is a double tree as $(B\cap H_1) \sqcup (B\cap H_2)$ was, giving a contradiction.
\end{proof}
\begin{lemma}\label{lem:M2verts}
Let $v\in V(G)$.
Then $v$ cannot be both adjacent to a $2$-vertex and incident to an edge of $M$.
\end{lemma}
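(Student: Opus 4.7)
Suppose for a contradiction that some $v \in V(G)$ is both adjacent to a $2$-vertex $u$ and incident to an $M$-edge $vw$. Let $x$ denote the other neighbour of $u$. By Lemmas~\ref{lem:2verts} and~\ref{lem:Mparity}, the vertices $v, x, w$ are pairwise distinct, big and have degree $\equiv c + 1 \pmod 2$ in $G$.

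The plan is to combine the standard $2$-vertex reduction at $u$ with the $M$-edge removal trick of Lemma~\ref{lem:Mmatching}. I define $H$ by removing $u$ (and hence the tree edges $uv, ux$), deleting $vw$ from $M$, and adjoining a new $2$-vertex $y$ to the double tree with neighbours $x$ and $w$ (with one edge in each tree of $A_H$). Then $|V(H)| = |V(G)|$, $e(M_H) = e(M) - 1$, and $A_H = (A - u) + yx + yw$ is a double tree on $V(H)$. By minimality, $H$ admits a $c$-balanced decomposition $H = H_1 \sqcup H_2$ with $A_H \cap H_i$ a tree for each $i$.

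Set $D_z := d_{H_1}(z) - d_{H_2}(z)$. Since $d_H(v) = d_G(v) - 2$ and $d_H(x) = d_G(x)$, $d_H(w) = d_G(w)$ are all $\equiv c+1 \pmod 2$, the parity argument of Lemma~\ref{lem:2verts} gives the improved bound $|D_v|, |D_x|, |D_w| \leq c - 1$. To reconstruct $G$ I delete $y$ (removing $yx, yw$), add $u$ back with $uv$ in some tree $T_\alpha$ and $ux$ in $T_{3-\alpha}$, and add $vw$ back to $M$ in some colour $G_\beta$. There are four choices $(\alpha, \beta)$; writing $i \in \{1,2\}$ for the tree containing $yx$ in $H$, a direct computation shows each choice shifts the values of $D_v, D_x, D_w$ by a combination of $0$ and $\pm 2$ depending on $s_i, s_\alpha, s_\beta \in \{+1,-1\}$. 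A short case analysis on the signs of $D_v, D_x, D_w$ relative to $s_i$ then shows that, thanks to the parity slack $|D_v|, |D_x|, |D_w| \leq c-1$, at least one of the four choices produces a $c$-balanced decomposition of $G$ whose intersection with $A$ is a double tree decomposition of $A$, contradicting the minimality of $G$.

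The main obstacle is a single extremal configuration in which $|D_v| = |D_x| = |D_w| = c - 1$ with the specific sign pattern that defeats all four natural choices simultaneously. In this case I first modify the decomposition of $H$ by an edge swap within $A_H$ at $v$: swap a $T_{3-i}$-edge $vp$ (with $p \neq u$, which exists because $v$ is big and has at least $c$ edges in $T_{3-i}$) with a $T_i$-edge of the fundamental cycle not incident to $v$. A judicious choice of swap reduces $|D_v|$ to $c-3$ while keeping every other $|D_z|$ at most $c$, so the modified decomposition of $H$ is still $c$-balanced; the reconstruction above then falls outside the extremal case and produces a $c$-balanced decomposition of $G$, again contradicting the minimality of $G$.
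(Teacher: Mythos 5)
There is a genuine gap, and it is exactly at the point your plan flags as "the main obstacle". Your reduction (delete the $2$-vertex $u$, delete the $M$-edge $vw$, attach a new $2$-vertex $y$ to $x$ and $w$) and the four-choice reconstruction are fine as far as they go: the shifts at $v,x,w$ are indeed $0$ or $\pm 2$, the parity slack gives $|D_v|,|D_x|,|D_w|\leq c-1$, and all four choices of $(\alpha,\beta)$ fail only in the single sign pattern you describe. But that extremal configuration can occur, and your proposed repair does not work as stated. An edge swap of a $T_{3-i}$-edge $vp$ with an edge $f$ of the fundamental cycle changes the degree difference not only at $v$ but also by $+2$ at $p$ and by $-2$ at both endpoints of $f$; these may be big vertices whose difference in $H_1\sqcup H_2$ is already $c$ or $c-1$ (no parity argument protects them), so the modified decomposition of $H$ need not be $c$-balanced, and nothing guarantees a ``judicious'' choice of $f$ avoiding this exists --- the valid choices of $f$ may even all be incident to $v$. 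This is precisely why the paper only performs swaps anchored at leaves or $3$-vertices, where Lemma~\ref{lem:edgeswap} and Lemma~\ref{fact:poor3} pin down which edge is exchanged and confine the side effects to small (hence automatically balanced) vertices; a swap at a big vertex has uncontrolled side effects. A second, smaller issue: Lemmas~\ref{lem:2verts} and~\ref{lem:Mparity} do not give that $w\neq x$; the $M$-edge may end at the other neighbour of the $2$-vertex, and that degenerate case (your $y$ then gets a double edge to $x$) needs to be treated, though it is easy.

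The paper sidesteps the extremal case entirely by choosing a different reduction in the case $w\neq x$: instead of deleting the $M$-edge, it deletes the $2$-vertex and \emph{relocates} the $M$-edge from $v$ to the other neighbour of the $2$-vertex (so $e(M)$ stays the same but $|G|$ drops, which the lexicographic minimality still permits). In the reconstruction the new $M$-edge at $v$ and the tree edge of the restored $2$-vertex land in opposite parts, and the relocated $M$-edge and the other tree edge replace each other at their endpoints, so \emph{every} degree difference is preserved exactly --- no parity argument, no case analysis, no extremal configuration. The coincidence case $w=x$ is handled separately by deleting both the $2$-vertex and the $M$-edge and adding the three edges back with two of them on the lighter side of $v$. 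To fix your write-up you would either need to adopt a reduction of this preserving type, or give a genuine argument (currently missing) that in the extremal configuration a balance-preserving modification of the $H$-decomposition exists.
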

\begin{proof}

Suppose that $u,v\in V(G)$, $uv\in E(M)$ and $v$ is adjacent to a $2$-vertex $w$.
Let $v'\neq v$ be the other neighbour of $w$. By Lemma~\ref{lem:Mparity}, both \(u\) and \(v\) are big.

\textit{Case 1.} $u = v'$.

\begin{figure}[ht] 
	\centering
    \begin{subfigure}[c][][c]{0.4\textwidth}
    \centering
	\begin{tikzpicture}
	[scale=.5,auto=center,circ/.style={circle,draw, minimum size=1pt, inner sep=2.5pt, fill=black}, >=stealth]
    	\node[circ, fill=black, label={\small $v$}] (v) at (0,0) {};
    	\node[circ, fill=white, label={\small $w$}] (w) at (2.5,-2) {};
    	\node[circ, label={\small $u$}] (u) at (-2.5,-2) {};
    	\begin{scope}[every edge/.style={very thick,draw=red}]
    	\end{scope}
    	\begin{scope}[every edge/.style={very thick,draw=blue}]
    	\end{scope}
    	\begin{scope}[every edge/.style={very thick,draw=black}]
    	\draw (u) edge[dashed] (v);
    	\draw (v) edge (w);
    	\draw (u) edge (w);
    	\end{scope}
	\end{tikzpicture}
	\caption{Configuration in $G$.} 
	\label{fig:redgeneral2a}
    \end{subfigure}
    \begin{subfigure}{0.4\textwidth}
    \centering
	\begin{tikzpicture}
	[scale=.5,auto=center,circ/.style={circle,draw, minimum size=1pt, inner sep=2.5pt, fill=black}, >=stealth]
    	\node[circ, fill=black, label={\small $v$}] (v) at (0,0) {};
    	\node[fill=white, draw=white] (w) at (2.5,-2) {};
    	\node[circ, label={\small $u$}] (u) at (-2.5,-2) {};
    	\begin{scope}[every edge/.style={very thick,draw=red}]
    	\end{scope}
    	\begin{scope}[every edge/.style={very thick,draw=blue}]
    	\end{scope}
    	\begin{scope}[every edge/.style={very thick,draw=black}]
    	\end{scope}
	\end{tikzpicture}
	\caption{Reduction to $H$.}
    \end{subfigure}
    \caption{Reduction step in Lemma~\ref{lem:M2verts} when $u= v'$.}
\end{figure}
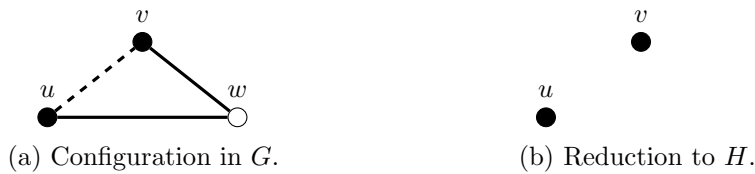
Define $H = B + N$ by deleting $w$ (so \(A\) becomes \(B\)) and removing $uv$ (so \(M\) becomes $N$).
Then $e(N) < e(M)$ and so, by minimality, there is a balanced decomposition $H= H_1 \sqcup H_2$ where $(B\cap H_1) \sqcup (B\cap H_2) $ is a double tree decomposition.
Without loss of generality we may assume that $d_{H_1}(v)\leq d_{H_2}(v)$.

Define $G = G_1\sqcup G_2$ where $E(G_1)= E(H_1) + uv + vw$ and $E(G_2) = E(H_2) + wu$.
Then as $d_{H_1}(v) \leq d_{H_2}(v)$, the vertex $v$ is balanced in $G_1\sqcup G_2$.
Since all other degree differences have been preserved, the decomposition $G = G_1\sqcup G_2$ is balanced.
Further, $(A\cap G_1)\sqcup (A\cap G_2)$ is a double tree, giving a contradiction.

\textit{Case 2.} $u \neq v'$.

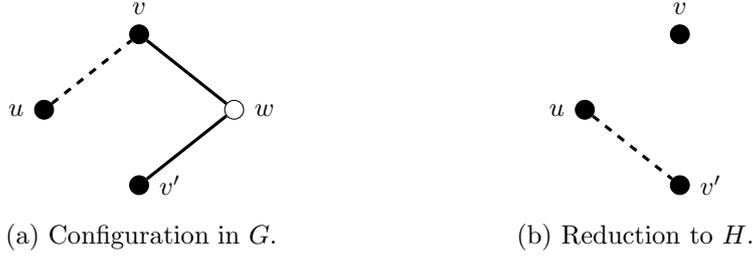
\begin{figure}[ht] 
	\centering
    \begin{subfigure}[c][][c]{0.4\textwidth}
    \centering
	\begin{tikzpicture}
	[scale=.5,auto=center,circ/.style={circle,draw, minimum size=1pt, inner sep=2.5pt, fill=black}, >=stealth]
    	\node[circ, fill=black, label={\small $v$}] (v) at (0,0) {};
    	\node[circ, label={0:\small $v'$}] (v2) at (0,-4) {};
    	\node[circ, fill=white, label={0:\small $w$}] (w) at (2.5,-2) {};
    	\node[circ, label={180:\small $u$}] (u) at (-2.5,-2) {};
    	\begin{scope}[every edge/.style={very thick,draw=black}]
    	\draw (u) edge[dashed] (v);
    	\draw (v) edge (w);
    	\draw (v2) edge (w);
    	\end{scope}
	\end{tikzpicture}
	\caption{Configuration in $G$.} 
	\label{fig:redgeneral2b}
    \end{subfigure} 
    \begin{subfigure}{0.4\textwidth}
    \centering
	\begin{tikzpicture}
	[scale=.5,auto=center,circ/.style={circle,draw, minimum size=1pt, inner sep=2.5pt, fill=black}, >=stealth]
    	\node[circ, fill=black, label={\small $v$}] (v) at (0,0) {};
    	\node[circ, label={0:\small $v'$}] (v2) at (0,-4) {};
    	\node[circ, label={180:\small $u$}] (u) at (-2.5,-2) {};
    	\begin{scope}[every edge/.style={very thick,draw=black}]
    	\draw (u) edge[dashed] (v2);
    	\end{scope}
	\end{tikzpicture}
	\caption{Reduction to $H$.}
	\label{fig:match2vertsneq}
    \end{subfigure}
    \caption{Reduction step in Lemma~\ref{lem:M2verts} when $u\neq v'$.}
\end{figure}
Define $H = B + N$ by deleting $w$ (so \(A\) becomes $B$), removing $uv$ from \(M\) and adding $uv'$ (so \(M\) becomes \(N\)), as in Figure~\ref{fig:match2vertsneq}.
Then $e(N) = e(M)$ and $|H| < |G|$ so by minimality there is a balanced decomposition $H= H_1 \sqcup H_2$ where $(B\cap H_1) \sqcup (B\cap H_2) $ is a double tree decomposition.
Without loss of generality, $uv'\in E(H_1)$.

Define $G = G_1 \sqcup G_2$ where $E(G_1) = E(H_1) - uv' + uv + wv'$, $E(G_2) = E(H_2) + vw$. 
Note that since $B\cap H_1$ and $B\cap H_2$ are both connected, the two subgraphs $A\cap G_1$ and $A \cap G_2$ are connected.
The decomposition is balanced as $H = H_1\sqcup H_2$ is balanced and degree differences are preserved, 
a contradiction.
\end{proof}
\begin{lemma}\label{lem:Mcrit}
Let $v\in V(G)$ be a critical vertex. 
Then $v$ cannot be both adjacent to a poor $3$-vertex and incident to an edge of $M$.
\end{lemma}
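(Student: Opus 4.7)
The plan is to argue by contradiction, following the template of Lemmas~\ref{lem:7verts}.(iii) and \ref{lem:M2verts}. Suppose $v$ is critical with $uv \in E(M)$ and adjacent to a poor $3$-vertex $w$; write $v_2$ and $v_3$ for the big and the small neighbours of $w$ other than $v$. By Lemma~\ref{lem:Mparity} the vertex $u$ is big, and we split into the cases $u=v_2$ and $u\neq v_2$, mirroring the two cases of Lemma~\ref{lem:M2verts}.

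In both cases the reduction applies the standard reduction for $3$-vertices to $w$, using Lemma~\ref{fact:poor3} to arrange the decomposition of $A$ so that the added tree edge is $vv_3$, and removes the $M$-edge $uv$. When $u\neq v_2$, we additionally insert $uv_2$ into the matching, as in the Case 2 reduction of Lemma~\ref{lem:M2verts}, giving $e(N)=e(M)$ and $|H|<|G|$. When $u=v_2$, we simply have $e(N)=e(M)-1$ and $|H|<|G|$. In either case, the minimality of $G$ produces a $c$-balanced decomposition $H=H_1\sqcup H_2$ with $B\cap H_1$ and $B\cap H_2$ spanning trees.

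The reconstruction reverses the $3$-vertex reduction: if $vv_3\in H_\alpha$, we place $vw,wv_3$ in $G_\alpha$ and $wv_2$ in $G_{3-\alpha}$; in the case $u\neq v_2$ we also delete $uv_2$; finally we place the restored $M$-edge $uv$ in $G_{3-\alpha}$. The key point is that, by the third bullet of Lemma~\ref{fact:poor3} applied to $w$ in the resulting double tree decomposition of $A$, the path from $v$ to $v_2$ in the colour-$\alpha$ tree of $A$ cannot pass through $w$, and so this tree has $v$-degree at least $2$. Combined with the trivial lower bound of $1$ at $v$ in the colour-$(3-\alpha)$ tree of $A$, placing $uv$ in $G_{3-\alpha}$ gives $d_{G_i}(v)\geq 2$ for both $i$, which suffices for the critical vertex $v$ to be balanced.

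The main obstacle is to simultaneously balance $u$ and, in the case $u\neq v_2$, $v_2$. The plan is to handle this by a case analysis on which side of $H$ contains $vv_3$ and, in the second case, $uv_2$. Where the natural colour assignment for $uv$ fails at $u$ or $v_2$, a preparatory edge swap of $vv_3$ in $H$ with an edge incident to $v$---which leaves $v$'s $B$-tree degrees unchanged while flipping $\alpha$---should resolve the conflict. The parity constraint \(d_G(u)\not\equiv c\bmod 2\) from Lemma~\ref{lem:Mparity}, ruling out $d_G(u)=c+4$ and $c+6$, provides the slack needed in these subcases.
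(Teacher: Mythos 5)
Your reduction coincides with the paper's, and your reconstruction is sound in one subcase: when $u\neq v_2$ and $uv_2$ lies on the opposite side of $H$ from $vv_3$, your rule reproduces the paper's Case~1 (there $uv$ replaces $uv_2$ in the same part and $wv_2$ also joins that part, so $u$ and $v_2$ are untouched, and the third bullet of Lemma~\ref{fact:poor3} plus criticality balances $v$). The other cases contain genuine gaps. First, when $u=v_2$ your placement puts both new edges at $u$ — namely $wv_2=uw$ and the restored $uv$ — into $G_{3-\alpha}$, so $u$'s colour-degree difference jumps by $2$; flipping $\alpha$ cannot repair this, since both edges track $3-\alpha$ whatever $\alpha$ is, and the parity slack from Lemma~\ref{lem:Mparity} is only $1$: since $d_H(u)\equiv c+1\bmod 2$, the difference at $u$ in $H$ is at most $c-1$, and an increase of $2$ gives $c+1>c$. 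Likewise, in the subcase $u\neq v_2$ with $uv_2$ on the same side as $vv_3$, the vertex $v_2$ carries no parity constraint at all (Lemma~\ref{lem:Mparity} only applies to endpoints of edges of $M$), so parity cannot absorb a change of $2$ at $v_2$ either.

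Second, the proposed repair — a ``preparatory edge swap of $vv_3$ in $H$ with an edge incident to $v$'' — is unjustified. The swap partner of $vv_3$ is only forced to be incident to $v$ when $v$ is a leaf of the tree containing $vv_3$ (Lemma~\ref{lem:edgeswap}); here $d_H(v)=c+2$, so $v$ need not be a leaf of either tree, and in general the partner is some edge elsewhere on the fundamental cycle. Even when a partner $vz$ incident to $v$ happens to exist, the swap recolours $vz$, changing the colour-degree difference at $z$ by $2$ with no control over whether $z$ is a big vertex that was already tight; you never account for this. The paper sidesteps both issues: it chooses the side of $uv$ so that $u$ gains at most one edge of each colour (opposite to $uw$ when $u=v_2$, and on the side of the deleted $uv_2$ when $u\neq v_2$), and performs any necessary swap \emph{after} reconstruction at the poor $3$-vertex $w$, where Lemma~\ref{fact:poor3} pins the partner down exactly ($wv_2\leftrightarrow wv$, respectively $uw\leftrightarrow wv$); the only vertex whose difference then moves is the critical vertex $v$, which is rebalanced by the path-avoidance argument. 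Some mechanism of this kind is needed to close your two bad cases; as written, the proof does not go through.
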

\begin{proof}
\begin{figure}[ht] 
	\centering
    \begin{subfigure}{0.4\textwidth}
    \centering
	\begin{tikzpicture}
	[scale=.5,auto=center,circ/.style={circle,draw, minimum size=1pt, inner sep=2.5pt, fill=black}, >=stealth]
    	\node[circ, fill=black, label={\small $v$}] (v) at (0,0) {};
    	\node[circ, fill=white, label={0:\small $s$}] (v3) at (3,-3) {};
    	\node[circ, fill=white, label={45:\small $w$}] (w) at (0,-3) {};
    	\node[circ, label={180:\small $u$}] (u) at (-3,-3) {};
    	\begin{scope}[every edge/.style={very thick,draw=black}]
    	\draw (u) edge[dashed] (v);
    	\draw (v3) edge (w);
    	\draw (v) edge (w);
    	\draw (u) edge (w);
    	\end{scope}
	\end{tikzpicture}
	\caption{Configuration in $G$.}
    \end{subfigure}
    \begin{subfigure}{0.4\textwidth}
    \centering
	\begin{tikzpicture}
	[scale=.5,auto=center,circ/.style={circle,draw, minimum size=1pt, inner sep=2.5pt, fill=black}, >=stealth]
    	\node[circ, fill=black, label={\small $v$}] (v) at (0,0) {};
    	\node[circ, fill=white, label={0:\small $s$}] (v3) at (3,-3) {};
    	\node[circ, label={180:\small $u$}] (u) at (-3,-3) {};
    	\begin{scope}[every edge/.style={very thick,draw=black}]
    	\draw (v3) edge (v);
    	\end{scope}
	\end{tikzpicture}
	\caption{Reduction to $H$.}
	\label{fig:redgeneral3eqb}
    \end{subfigure}
    \caption{Reduction step in Lemma~\ref{lem:Mcrit} when $u = v'$.}
\end{figure}
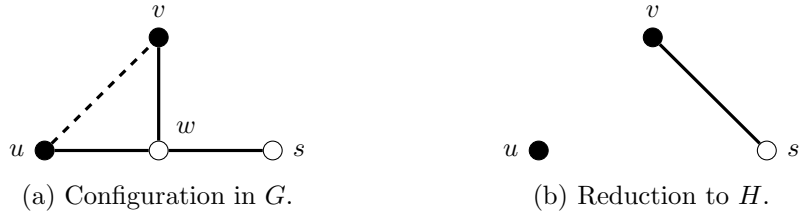
Suppose that $u,v\in V(G)$ where $v$ is critical, $uv\in E(M)$ and $v$ is adjacent to a poor $3$-vertex $w$. By Lemma~\ref{lem:Mparity}, both \(u\) and \(v\) are big.
Let $s$ be the small neighbour of $w$ and $v'\neq v$ be the other big neighbour of $w$.

First suppose that $u = v'$. By edge flipping and Lemma~\ref{fact:poor3} we may assume that $vw$ and $ws$ are in the same tree.  We carry out the standard reduction for $3$-vertices at $w$ so that $A$ becomes a double tree $B$ and delete $uv$ from $M$ to get $N$-- see Figure~\ref{fig:redgeneral3eqb}. Let $H = B + N$.

Now $e(N) < e(M)$, so by minimality there is a balanced decomposition $H= H_1 \sqcup H_2$ where $(B\cap H_1) \sqcup (B\cap H_2) $ is a double tree decomposition.
Without loss of generality, $vs\in E(B)\cap E(H_1)$.
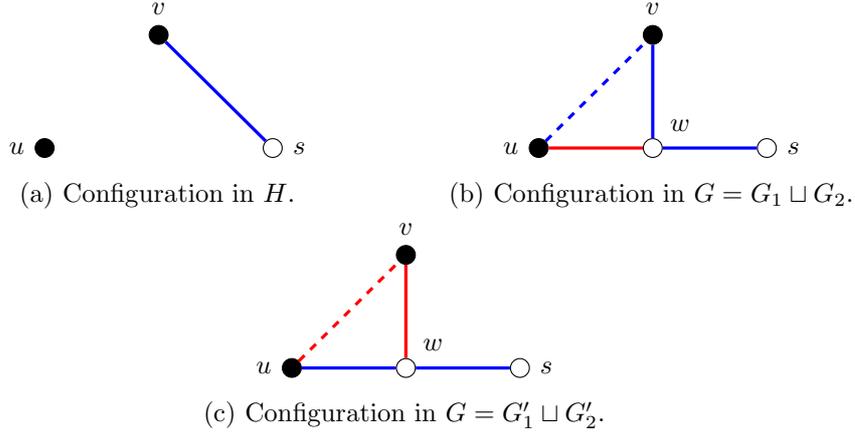
\begin{figure}[ht] 
	\centering
    \begin{subfigure}{0.4\textwidth}
    \centering
	\begin{tikzpicture}
	[scale=.5,auto=center,circ/.style={circle,draw, minimum size=1pt, inner sep=2.5pt, fill=black}, >=stealth]
    	\node[circ, fill=black, label={\small $v$}] (v) at (0,0) {};
    	\node[circ, fill=white, label={0:\small $s$}] (v3) at (3,-3) {};
    	\node[circ, label={180:\small $u$}] (u) at (-3,-3) {};
    	\begin{scope}[every edge/.style={very thick,draw=blue}]
    	\draw (v3) edge (v);
    	\end{scope}
	\end{tikzpicture}
	\caption{Configuration in $H$.}
    \end{subfigure}
    \begin{subfigure}{0.4\textwidth}
    \centering
	\begin{tikzpicture}
	[scale=.5,auto=center,circ/.style={circle,draw, minimum size=1pt, inner sep=2.5pt, fill=black}, >=stealth]
    	\node[circ, fill=black, label={\small $v$}] (v) at (0,0) {};
    	\node[circ, fill=white, label={0:\small $s$}] (v3) at (3,-3) {};
    	\node[circ, fill=white, label={45:\small $w$}] (w) at (0,-3) {};
    	\node[circ, label={180:\small $u$}] (u) at (-3,-3) {};
    	\begin{scope}[every edge/.style={very thick,draw=red}]
    	\draw (u) edge (w);
    	\end{scope}
    	\begin{scope}[every edge/.style={very thick,draw=blue}]
    	\draw (u) edge[dashed] (v);
    	\draw (v3) edge (w);
    	\draw (v) edge (w);
    	\end{scope}
	\end{tikzpicture}
	\caption{Configuration in $G=G_1\sqcup G_2$.}
	\label{fig:recgeneraleq3b}
    \end{subfigure} 
    \begin{subfigure}{0.4\textwidth}
    \centering
	\begin{tikzpicture}
	[scale=.5,auto=center,circ/.style={circle,draw, minimum size=1pt, inner sep=2.5pt, fill=black}, >=stealth]
    	\node[circ, fill=black, label={\small $v$}] (v) at (0,0) {};
    	\node[circ, fill=white, label={0:\small $s$}] (v3) at (3,-3) {};
    	\node[circ, fill=white, label={45:\small $w$}] (w) at (0,-3) {};
    	\node[circ, label={180:\small $u$}] (u) at (-3,-3) {};
    	\begin{scope}[every edge/.style={very thick,draw=red}]
    	\draw (v) edge (w);
    	\draw (u) edge[dashed] (v);
    	\end{scope}
    	\begin{scope}[every edge/.style={very thick,draw=blue}]
    	\draw (v3) edge (w);
    	\draw (u) edge (w);
    	\end{scope}
	\end{tikzpicture}
	\caption{Configuration in $G=G_1'\sqcup G_2'$.}
	\label{fig:recgeneral3eqc}
    \end{subfigure}
    \caption{Reconstruction step in Lemma~\ref{lem:Mcrit} when $u = v'$.}
\end{figure}

Define $G = G_1 \sqcup G_2$ by 
\begin{align*}
E(G_1) &= E(H_1) - vs + uv + vw + ws, \\
E(G_2) &= E(H_2) + uw,
\end{align*}
as in Figure~\ref{fig:recgeneraleq3b}.
Degree differences at all vertices of $G$ except $v$ have been preserved and $(A\cap G_1)\sqcup (A\cap G_2)$ is a double tree.
Hence, if $v$ is balanced we have a contradiction.
If $d_{G_2}(v)\geq 2$ then $v$ is balanced.
Otherwise, swap the edge $uw$ in the double tree decomposition $A= (A\cap G_1)\sqcup (A\cap G_2)$ to obtain $A = T_1 \sqcup T_2$.
Define $G=G_1'\sqcup G_2'$ where 
\begin{align*}
E(G_1') &= E(T_1) \sqcup (E(M)\cap G_1) - uv \\
E(G_2') &= E(T_2) \sqcup (E(M)\cap G_2) + uv,
\end{align*}
as in Figure~\ref{fig:recgeneral3eqc}.
Then degree differences at all vertices of $G$ except $v$ have been preserved and $(A\cap G_1')\sqcup (A\cap G_2')$ is a double tree, but now $d_{G_2}(v) \geq 2$, giving a contradiction.

We may therefore assume $u\neq v'$.
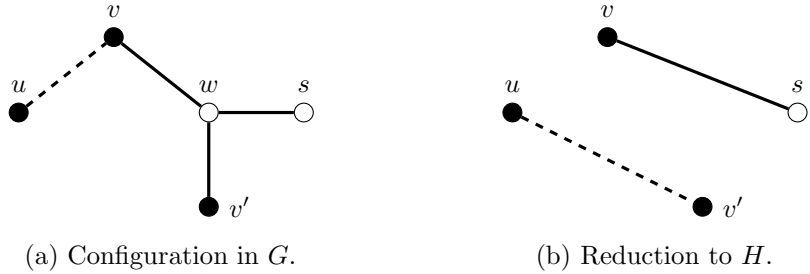
\begin{figure}[ht] 
	\centering
    \begin{subfigure}{0.4\textwidth}
    \centering
	\begin{tikzpicture}
	[scale=.5,auto=center,circ/.style={circle,draw, minimum size=1pt, inner sep=2.5pt, fill=black}, >=stealth]
    	\node[circ, fill=black, label={\small $v$}] (v) at (0,0) {};
    	\node[circ, label={0:\small $v'$}] (v2) at (2.5,-4.5) {};
    	\node[circ, fill=white, label={\small $s$}] (v3) at (5,-2) {};
    	\node[circ, fill=white, label={\small $w$}] (w) at (2.5,-2) {};
    	\node[circ, label={\small $u$}] (u) at (-2.5,-2) {};
    	\begin{scope}[every edge/.style={very thick,draw=black}]
    	\draw (u) edge[dashed] (v);
    	\draw (v3) edge (w);
    	\draw (v) edge (w);
    	\draw (v2) edge (w);
    	\end{scope}
	\end{tikzpicture}
	\caption{Configuration in $G$.} 
	\label{fig:redgeneral3a}
    \end{subfigure} 
    \begin{subfigure}{0.4\textwidth}
    \centering
	\begin{tikzpicture}
	[scale=.5,auto=center,circ/.style={circle,draw, minimum size=1pt, inner sep=2.5pt, fill=black}, >=stealth]
    	\node[circ, fill=black, label={\small $v$}] (v) at (0,0) {};
    	\node[circ, label={0:\small $v'$}] (v2) at (2.5,-4.5) {};
    	\node[circ, fill=white, label={\small $s$}] (v3) at (5,-2) {};
    	\node[circ, label={\small $u$}] (u) at (-2.5,-2) {};
    	\begin{scope}[every edge/.style={very thick,draw=black}]
    	\draw (u) edge[dashed] (v2);
    	\draw (v3) edge (v);
    	\end{scope}
	\end{tikzpicture}
	\caption{Reduction to $H$.}
	\label{fig:redgeneral3b}
    \end{subfigure}
    \caption{Reduction step in Lemma~\ref{lem:Mcrit} when $u\neq v'$.}
\end{figure}
By edge flipping and Lemma~\ref{fact:poor3} we may assume that edges $vw$ and $ws$ are in the same tree. We carry out the standard reduction for $3$-vertices at $w$ so that $A$ becomes a double tree $B$. We let $N = M - uv + uv'$. See Figure~\ref{fig:redgeneral3b}. Let $H = B + N$.

Then $e(N) = e(M)$ and $|H| < |G|$ so, by minimality, there is a balanced decomposition $H= H_1 \sqcup H_2$ where $(B\cap H_1) \sqcup (B\cap H_2) $ is a double tree decomposition.
Without loss of generality, $uv'\in E(N)\cap E(H_1)$.

\textit{Case 1.} $vs\in E(B)\cap E(H_2)$.
\begin{figure}[ht] 
	\centering
    \begin{subfigure}{0.4\textwidth}
    \centering
	\begin{tikzpicture}
	[scale=.5,auto=center,circ/.style={circle,draw, minimum size=1pt, inner sep=2.5pt, fill=black}, >=stealth]
    	\node[circ, fill=black, label={\small $v$}] (v) at (0,0) {};
    	\node[circ, label={0:\small $v'$}] (v2) at (2.5,-4.5) {};
    	\node[circ, fill=white, label={\small $s$}] (v3) at (5,-2) {};
    	\node[circ, label={\small $u$}] (u) at (-2.5,-2) {};
    	\begin{scope}[every edge/.style={very thick,draw=red}]
    	\draw (v3) edge (v);
    	\end{scope}
    	\begin{scope}[every edge/.style={very thick,draw=blue}]
    	\draw (u) edge[dashed] (v2);
    	\end{scope}
	\end{tikzpicture}
	\caption{Configuration in $H$.} 
    \end{subfigure} 
    \begin{subfigure}{0.4\textwidth}
    \centering
	\begin{tikzpicture}
	[scale=.5,auto=center,circ/.style={circle,draw, minimum size=1pt, inner sep=2.5pt, fill=black}, >=stealth]
    	\node[circ, fill=black, label={\small $v$}] (v) at (0,0) {};
    	\node[circ, label={0:\small $v'$}] (v2) at (2.5,-4.5) {};
    	\node[circ, fill=white, label={\small $s$}] (v3) at (5,-2) {};
    	\node[circ, fill=white, label={\small $w$}] (w) at (2.5,-2) {};
    	\node[circ, label={\small $u$}] (u) at (-2.5,-2) {};
    	\begin{scope}[every edge/.style={very thick,draw=red}]
    	\draw (v3) edge (w);
    	\draw (v) edge (w);
    	\draw (v) edge[ snake=snake,
    	        segment amplitude=.6mm,
	            segment length=3mm,
	            line after snake=0mm] (v2);
    	\end{scope}
    	\begin{scope}[every edge/.style={very thick,draw=blue}]
    	\draw (v2) edge (w);
    	\draw (u) edge[dashed] (v);
    	\end{scope}
	\end{tikzpicture}
	\caption{Reconstruction in $G$.}
	\label{fig:recgeneral3b}
    \end{subfigure}
    \caption{Reconstruction step in Case 1.}
\end{figure}
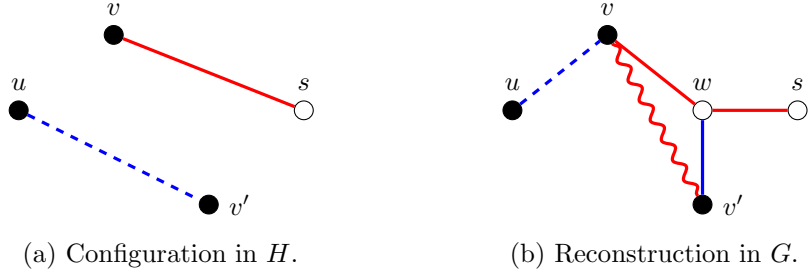

Reverse the reductions to give a decomposition $G = G_1 \sqcup G_2$ defined by
\begin{align*}
E(G_1) &= E(H_1) - uv' + uv + wv', \\
E(G_2) &= E(H_2) - vs + vw + ws,
\end{align*}
as in Figure~\ref{fig:recgeneral3b}.
Since $B\cap H_1$ and $B\cap H_2$ are both connected, $T_1 \coloneqq A\cap G_1$ and $T_2 \coloneqq A\cap G_2$ are as well and form a double tree decomposition for $A$.

All degree differences at big vertices have been preserved except at \(v\) where an extra blue edge is present.
But, by Lemma~\ref{fact:poor3}, the path from $v$ to $v'$ in $T_2$ does not contain $s$, so $d_{G_2}(v)\geq d_{T_2}(v)\geq 2$ and so $v$ is balanced in $G= G_1\sqcup G_2$ as it is critical.
Hence, $G= G_1\sqcup G_2$ is balanced, a contradiction.

\textit{Case 2.} $vs\in E(B)\cap E(H_1)$.
\begin{figure}[ht] 
	\centering
    \begin{subfigure}{0.4\textwidth}
    \centering
	\begin{tikzpicture}
	[scale=.5,auto=center,circ/.style={circle,draw, minimum size=1pt, inner sep=2.5pt, fill=black}, >=stealth]
    	\node[circ, fill=black, label={\small $v$}] (v) at (0,0) {};
    	\node[circ, label={0:\small $v'$}] (v2) at (2.5,-4.5) {};
    	\node[circ, fill=white, label={\small $s$}] (v3) at (5,-2) {};
    	\node[circ, label={\small $u$}] (u) at (-2.5,-2) {};
    	\begin{scope}[every edge/.style={very thick,draw=blue}]
    	\draw (u) edge[dashed] (v2);
    	\draw (v3) edge (v);
    	\end{scope}
	\end{tikzpicture}
	\caption{Configuration in $H$.} 
    \end{subfigure} 
    \begin{subfigure}{0.4\textwidth}
    \centering
	\begin{tikzpicture}
	[scale=.5,auto=center,circ/.style={circle,draw, minimum size=1pt, inner sep=2.5pt, fill=black}, >=stealth]
    	\node[circ, fill=black, label={\small $v$}] (v) at (0,0) {};
    	\node[circ, label={0:\small $v'$}] (v2) at (2.5,-4.5) {};
    	\node[circ, fill=white, label={\small $s$}] (v3) at (5,-2) {};
    	\node[circ, fill=white, label={\small $w$}] (w) at (2.5,-2) {};
    	\node[circ, label={\small $u$}] (u) at (-2.5,-2) {};
    	
    	\begin{scope}[every edge/.style={very thick,draw=red}]
    	\draw (v2) edge (w);
    	\end{scope}
    	\begin{scope}[every edge/.style={very thick,draw=blue}]
    	\draw (v) edge (w);
    	\draw (v3) edge (w);
    	\draw (u) edge[dashed] (v);
    	\draw (v) edge[ snake=snake,
    	        segment amplitude=.6mm,
	            segment length=3mm,
	            line after snake=0mm] (v2);
    	\end{scope}
	\end{tikzpicture}
	\caption{Configuration in $G=G_1^{\ast}\sqcup G_2^{\ast}$.} 
	\label{fig:recgeneral3ib}
    \end{subfigure}
    \begin{subfigure}{0.4\textwidth}
    \centering
	\begin{tikzpicture}
	[scale=.5,auto=center,circ/.style={circle,draw, minimum size=1pt, inner sep=2.5pt, fill=black}, >=stealth]
    	\node[circ, fill=black, label={\small $v$}] (v) at (0,0) {};
    	\node[circ, label={0:\small $v'$}] (v2) at (2.5,-4.5) {};
    	\node[circ, fill=white, label={\small $s$}] (v3) at (5,-2) {};
    	\node[circ, fill=white, label={\small $w$}] (w) at (2.5,-2) {};
    	\node[circ, label={\small $u$}] (u) at (-2.5,-2) {};
    	\begin{scope}[every edge/.style={very thick,draw=red}]
    	\draw (v) edge (w);
    	\end{scope}
    	\begin{scope}[every edge/.style={very thick,draw=blue}]
    	\draw (v2) edge (w);
    	\draw (v3) edge (w);
    	\draw (u) edge[dashed] (v);
    	\draw (v) edge[ snake=snake,
    	        segment amplitude=.6mm,
	            segment length=3mm,
	            line after snake=0mm] (v2);
    	\end{scope}
	\end{tikzpicture}
	\caption{Configuration in $G=G_1\sqcup G_2$.}
    \end{subfigure}
    \caption{Reconstruction step in Case 2.}
\end{figure}
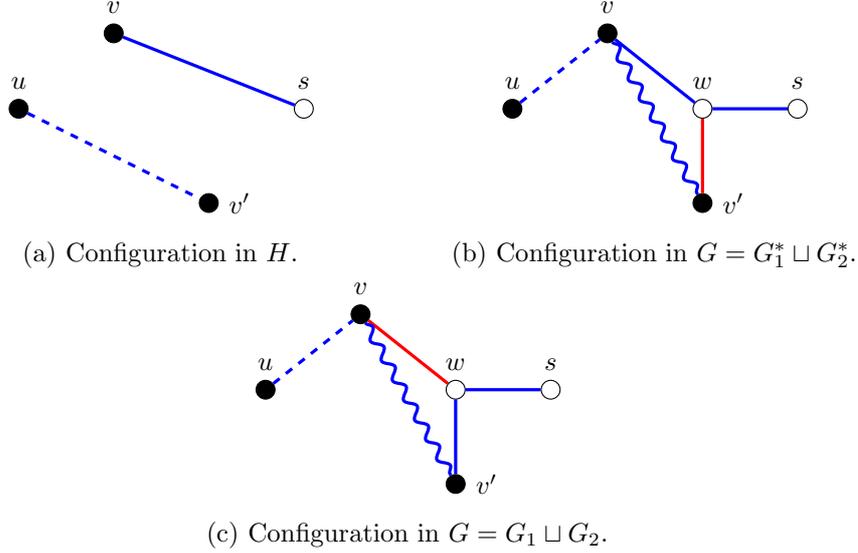

Reverse the reductions to give a decomposition $G=G_1^{\ast}\sqcup G_2^{\ast}$ where 
\begin{align*}
E(G_1^{\ast}) &= E(H_1) - uv' - vs + uv + vw + ws, \\
E(G_2^{\ast}) &= E(H_2) + wv',
\end{align*}
as in Figure~\ref{fig:recgeneral3ib}.
Since $B\cap H_1$ and $B\cap H_2$ are both connected, $T_1 \coloneqq A\cap G_1^{\ast}$ and $T_2 \coloneqq A\cap G_2^{\ast}$ are as well and form a double tree decomposition for $A$.
Let $A=S_1\sqcup S_2$ be the double tree decomposition obtained by swapping edge $wv'$ in $A$ (this swaps with $wv$ by Lemma~\ref{fact:poor3}).
Let $G = G_1\sqcup G_2$ be the decomposition where
\begin{align*}
E(G_1) &= E(N\cap G_1^{\ast}) \sqcup E(S_1),\\
E(G_2) &= E(N\cap G_2^{\ast}) \sqcup E(S_2).
\end{align*}
Then $S_1 = A\cap G_1$ and $S_2 = A\cap G_2$ are both spanning trees.
All degree differences at big vertices have been preserved except at \(v\) where an extra red edge is present. But, by Lemma~\ref{fact:poor3}, there is a path from $v$ to $v'$ in $S_1$ that does not contain $s$, so $d_{G_1}(v) \geq d_{S_1}(v) \geq 2$ and so $v$ is balanced in $G= G_1\sqcup G_2$ as it is critical.
Hence, $G = G_1\sqcup G_2$ is balanced, a contradiction.
\end{proof}

\subsection{Discharging}
\begin{proof}[Proof of Theorem~\ref{claim:general}]
Let $G = A + M$ be a counterexample to the bound $c = 4$ such that
\begin{enumerate}[noitemsep]
\item $e(M)$ is minimal,
\item subject to this, $|G|$ is minimal.
\end{enumerate}

Define the charge function $f\colon V \to \Q$ to be the degree of $v$ in the double tree $A$: $f(v) = d_A(v)$. 
Define the \emph{discharging procedure} similarly to the proof of Theorem~\ref{thm:2trees}. 
For each edge $uv \in E(A)$, a big vertex $v$ sends to its \mbox{neighbour $u$}
\begin{itemize}[noitemsep]
\item charge $1$ if $u$ is a $2$-vertex,
\item charge $1/2$ if $u$ is a poor $3$-vertex, 
\item charge $1/2$ if $u$ is a bad $3$-vertex, 
\item charge $1/3$ if $u$ is a rich $3$-vertex.
\end{itemize}
Let $g\colon V\to \Q$ be the charge function after the discharging procedure has taken place.
Then 
\begin{equation*}
\sum_{v\in V(G)} g(v) = \sum_{v\in V(G)} f(v) = 2e(A) = 4 n -  4.
\end{equation*}
We claim that every vertex $v$ of $G$ has $g(v) \geq 4$, which will give a contradiction.
As in the proof of Theorem~\ref{thm:2trees}, the claim holds if $v\in V(G)$ is not incident to any edge of $M$.
If $v$ is incident to at least one edge of $M$, then, by Lemmas~\ref{lem:Mparity}, \ref{lem:Mmatching}, \ref{lem:M2verts} and \ref{lem:Mcrit}, $v$ is big, has odd degree, is incident to exactly one $e\in E(M)$, is not adjacent to any $2$-vertices and is not adjacent to any poor $3$-vertex. There are two cases remaining:
\begin{enumerate}
\item $d(v) = k\geq 9$.
Then $g(v) \geq (k - 1) - (k-1)/2\geq 4$.
\item $d(v) = 7$.
If all neighbours of $v$ in $A$ are small, then, by Lemma~\ref{lem:Mbad}, $g(v) = 6 - 6\cdot 1/3\geq 4$.
Otherwise $v$ has a big neighbour, so $g(v) \geq 6 - 2\cdot 1/2 -  3\cdot 1/3\geq 4$.\qedhere
\end{enumerate}
\end{proof}

\section{Balancing infinite graphs}\label{sec:semiconn}
Let $G = (V,E)$ be an undirected graph.
Recall that a spanning subgraph $H\subset G$ is called \emph{semiconnected} if it contains an edge of every finite cut of $G$.
We note that this notion depends on the ambient graph $G$ and that for finite graphs, the notions of spanning connected and semiconnected subgraphs coincide.

The main results of this section are the following, which imply Theorem~\ref{cor:infinite}.
\begin{theorem}\label{thm:semiconn}
Let $c$ be minimal such that any finite double tree has a $c$-balanced decomposition. 
Then if $G$ is a countable infinite double tree, it admits a $c$-balanced decomposition $G= S_1 \sqcup S_2$ where $S_1, S_2$ are semiconnected and acyclic.
\end{theorem}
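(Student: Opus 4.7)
The plan is a compactness argument in the compact metrisable space $X=\{1,2\}^{E(G)}$ of two-colourings. Every condition in the conclusion is local: $c$-balance at a finite-degree vertex, the condition $d_{S_j}(v)\geq N$ at any vertex and any $N\geq 1$, both colours appearing on a fixed finite cut, and a fixed finite cycle being non-monochromatic each depend on only finitely many edge-colours. The set of good colourings is thus a $G_\delta$ in $X$, and it is enough to establish the corresponding finite intersection property.

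More precisely, I would enumerate $V=\{v_1,v_2,\ldots\}$, the finite cuts of $G$ as $C_1,C_2,\ldots$, and the cycles as $Z_1,Z_2,\ldots$. For each $n\geq 1$, let $\mathcal{F}_n\subseteq X$ be the clopen set of colourings $\chi=(S_1,S_2)$ satisfying: each $i\leq n$ with $d_G(v_i)<\infty$ has $|d_{S_1}(v_i)-d_{S_2}(v_i)|\leq c$; each $i\leq n$ with $d_G(v_i)=\infty$ has $d_{S_j}(v_i)\geq n$ for $j=1,2$; each of $C_1,\ldots,C_n$ contains an edge of each colour; and each of $Z_1,\ldots,Z_n$ is non-monochromatic. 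Any $\chi\in\bigcap_n\mathcal{F}_n$ gives the required decomposition, so by the finite intersection property on the compact $X$ it suffices to prove $\mathcal{F}_n\neq\emptyset$ for every $n$.

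Fix a double tree decomposition $G=T_1\sqcup T_2$. To realise $\mathcal{F}_n$, I would build a finite $V_n\subseteq V$ containing $v_1,\ldots,v_n$, all endpoints of the edges in $C_1,\ldots,C_n,Z_1,\ldots,Z_n$, every neighbour of each finite-degree $v_i$ (for $i\leq n$), and at least $2n+c$ neighbours of each infinite-degree $v_i$ (for $i\leq n$). I then form a finite double tree $H_n$ by contracting each component of $G[V\setminus V_n]$ to an auxiliary vertex, so that the contractions of $T_1$ and $T_2$ remain spanning trees of $H_n$. Applying Theorem~\ref{thm:2trees} to $H_n$ yields a $c$-balanced double tree decomposition, which I lift to a colouring of $E(G)$ by transferring each colour to the corresponding edge of $G$ and colouring edges interior to $V\setminus V_n$ by the default $T_1/T_2$ assignment.

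A routine check places this colouring in $\mathcal{F}_n$: finite-degree $v_i$ inherit their balance from $H_n$; infinite-degree $v_i$ have $d_{H_n}(v_i)\geq 2n+c$ and so at least $n$ edges of each colour by $c$-balance in $H_n$, a property preserved by any extension; each $C_k$ meets $V_n$ on both sides, so each spanning tree of the connected $H_n$ uses at least one edge of the induced cut, which lies in $C_k$; each $Z_k\subseteq H_n$ is non-monochromatic because the two colour classes in $H_n$ are trees. The main technical obstacle I foresee is constructing $H_n$ when $G$ is not locally finite: a vertex $v\in V_n$ with $d_G(v)=\infty$ can produce infinitely many boundary edges under a naive single-vertex contraction, so one must truncate the boundary carefully, keeping only finitely many $T_1$- and $T_2$-boundary edges at each such $v$, enough to preserve the spanning tree structure of the two contracted trees and to witness the $\geq 2n+c$ degree condition, and colour the remaining boundary edges by the default assignment.
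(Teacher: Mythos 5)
For locally finite $G$ your argument is correct and is essentially the paper's proof: the same finite truncations (contract the components of $G-V_n$, prune to a finite double tree $H_n$ with $H_n[V_n]=G[V_n]$), the same appeal to Theorem~\ref{thm:2trees}, and a compactness extraction; building the cut and cycle conditions into the finite approximations rather than checking them at the limit is only a difference of packaging, and your cut verification is fine because every component of $G-V_n$ lies wholly on one side of a finite cut whose edges have both endpoints in $V_n$.

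The genuine gap is at vertices of infinite degree, which the statement does cover (there ``$c$-balanced'' means both colour degrees are infinite, as in Theorem~\ref{cor:infinite}). The condition $d_{S_j}(v_i)\geq n$ at such a vertex is \emph{not} clopen in $X=\{1,2\}^{E(G)}$: it is open (any witness is finite) but not closed, since the witnessing edges can drift. So your sets $\mathcal{F}_n$ are merely open, the finite intersection property for closed sets does not apply, and a pointwise limit of colourings $\chi_n\in\mathcal{F}_n$ can give an infinite-degree vertex only finitely many (even zero) edges of one colour: the $n$ witnesses in $\chi_n$ live inside $G[V_n]$ and may use edges whose other endpoints leave every fixed finite set, while all remaining edges at that vertex are coloured by the default $T_1/T_2$ assignment, and $T_1$, say, may contain only one edge there. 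Nothing in your construction pins down the colour of any \emph{specified} edge at an infinite-degree vertex, which is what a closed condition would require. The truncation problem you flag for building $H_n$ is real too (with infinite degrees $G-V_n$ may even have infinitely many components, so per-component contraction is not finite), but it is secondary: the limit step is where the scheme breaks. The paper sidesteps all of this by first reducing to the locally finite case: each infinite-degree vertex is replaced by an infinite path of double edges whose vertices each absorb $c+1$ of the original neighbours; balance at these finite-degree vertices is a genuinely local (clopen) condition that forces at least one non-path edge of each colour at every path vertex, and contracting the path back yields infinitely many edges of each colour at the original vertex. You would need this reduction, or some other device that fixes the colours of designated edges at infinite-degree vertices, to close the argument.
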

\begin{theorem}\label{thm:semiconngeneral}
Let $c$ be minimal such that any finite graph containing a spanning double tree has a $c$-balanced decomposition into connected graphs. 
Then if $G$ is a countable infinite graph containing a spanning double tree, it admits a $c$-balanced decomposition $G= S_1 \sqcup S_2$ where $S_1, S_2$ are semiconnected.
\end{theorem}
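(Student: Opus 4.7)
The plan is to prove Theorem~\ref{thm:semiconngeneral} by an exhaustion-and-compactness argument that lifts the finite hypothesis (the one defining $c$) to the countable setting. Fix a spanning double tree $A = T_1 \sqcup T_2$ of $G$ and an enumeration $V(G) = \{v_1, v_2, \ldots\}$. First I would build a nested sequence of finite vertex sets $V_1 \subseteq V_2 \subseteq \cdots$ with $\bigcup_n V_n = V(G)$, $v_n \in V_n$, and each $V_n$ connected in both $T_1$ and $T_2$; this is achievable by greedily adjoining the finitely many vertices lying on tree-paths between existing members in $T_1$ and $T_2$.

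For each $n$, form the multigraph $G^{(n)}$ whose vertex set consists of $V_n$ together with one auxiliary vertex per connected component of $G - V_n$, by contracting each such component to its auxiliary vertex and deleting self-loops. When $G$ is locally finite this is genuinely finite. Because $V_n$ is connected in both $T_1$ and $T_2$, the images of $T_1$ and $T_2$ in $G^{(n)}$ are connected spanning subgraphs with disjoint edge-sets, and pruning each to a spanning tree yields a spanning double tree of $G^{(n)}$. The hypothesis of the theorem then gives a $c$-balanced decomposition $G^{(n)} = S_1^{(n)} \sqcup S_2^{(n)}$ into connected spanning subgraphs. Since contraction is a bijection on edges meeting $V_n$, this pulls back to a colouring $\chi_n$ of all such edges; extend $\chi_n$ arbitrarily to the rest of $E(G)$ so that $\chi_n \in \{1,2\}^{E(G)}$.

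The space $\{1,2\}^{E(G)}$ is compact, so a subsequence $\chi_{n_k}$ converges pointwise to some $\chi_\infty$. A further diagonal refinement — using that $c$-balance of $\chi_{n_k}$ at any $v \in V_{n_k}$ with $d_G(v) = \infty$ forces both colour-degrees of $v$ in $\chi_{n_k}$ to be infinite, and pigeonholing over the countable edge set at $v$ to stabilise $N$ witnesses of each colour — allows us to additionally ensure that each infinite-degree $v$ has infinitely many edges of each colour in $\chi_\infty$. Set $S_i = \chi_\infty^{-1}(i)$. Balance at each finite-degree $v$ is then immediate from pointwise convergence on the finitely many edges at $v$, and at infinite-degree $v$ from the diagonal refinement. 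For semiconnectedness, given a finite cut $C$ of $G$ with bipartition $(X, \overline{X})$, choose $k$ large enough that every endpoint of an edge of $C$ lies in $V_{n_k}$; then every component of $G - V_{n_k}$ lies entirely in $X$ or entirely in $\overline{X}$ (a path witnessing otherwise would use an edge of $C$ and so pass through $V_{n_k}$). Hence $(X, \overline{X})$ induces a non-trivial bipartition of $V(G^{(n_k)})$ whose cut is precisely $C$, and the connectedness of both $S_i^{(n_k)}$ forces each colour to cross $C$ in $\chi_{n_k}$, and so in $\chi_\infty$.

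The main technical obstacle is the non-locally-finite case: if some $v \in V_n$ has infinite degree in $G$ then $G - V_n$ has infinitely many components attached to $v$, so $G^{(n)}$ has infinitely many vertices and edges and the finite hypothesis does not apply directly. A natural remedy is to pre-commit each infinite-degree vertex to a specific partition of its incident edges into two infinite parts at the outset, compatible with some spanning double tree of $G$, and then to run the compactness argument on the locally-finite quotient obtained after performing these pre-commitments. Verifying that this preprocessing respects both the connectivity of the $S_i$ and the semiconnectedness requirement at every finite cut meeting an infinite-degree vertex is the most delicate technical point.
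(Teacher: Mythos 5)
Your argument for locally finite $G$ is essentially the paper's: contract the components of $G-V_n$, prune the contracted images of $T_1,T_2$ to exhibit a spanning double tree, apply the finite statement (Theorem~\ref{thm:2treesgeneral}), pull back, and use compactness; your cut-image argument for semiconnectedness is a clean shortcut for the paper's path-extension argument and is fine. However, your very first construction step is not available: it is \emph{not} true in general that every finite set extends to a finite set inducing connected subgraphs of both $T_1$ and $T_2$. For example, let $G$ be the locally finite double tree on $\mathbb{N}$ whose first tree is the ray $1\!-\!2\!-\!3\!-\!\cdots$ and whose second tree is the one-way infinite path visiting the vertices in the order $2,4,1,6,3,8,5,10,7,\dotsc$ (edge-disjoint from the ray). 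Any finite set containing $\{1,2\}$ that induces a connected subgraph of the ray is an initial segment $\{1,\dotsc,m\}$, and no such segment with $m\geq 2$ induces a connected subgraph of the second tree, so your greedy closure never terminates. Fortunately this hypothesis is also unnecessary: contraction of the components of $G-V_n$ sends the spanning trees $T_1,T_2$ to connected spanning subgraphs of $G^{(n)}$ for an \emph{arbitrary} finite $V_n$, which is exactly how the paper proceeds with $V_n=\{v_1,\dotsc,v_n\}$. So this step should simply be deleted rather than repaired.

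The genuine gap is the non-locally-finite case, which you flag but do not resolve, and your two suggested devices do not close it. First, if some vertex of $V_n$ has infinite degree then $G^{(n)}$ is not a finite multigraph, so the finite hypothesis cannot be invoked at all. Second, your in-limit ``diagonal refinement'' cannot work in general: if $e_1,e_2,\dotsc$ enumerate the edges at $v$ and $\chi_k$ gives colour $1$ exactly to $\{e_j\colon j\geq k\}$ at $v$, then every $\chi_k$ has infinitely many edges of each colour at $v$, yet along \emph{every} pointwise-convergent subsequence the limit has no colour-$1$ edge at $v$; no refinement recovers two infinite colour classes. The paper's fix is to reduce to the locally finite case \emph{before} the compactness argument: each infinite-degree vertex is replaced by an infinite path of double edges in which every new vertex receives $c+1$ of the original neighbours. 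Each new vertex then has finite degree, and $c$-balancedness there forces at least one off-path edge of each colour at each of them, so after re-merging the original vertex has both colour-degrees infinite (which is how the statement is to be read at infinite-degree vertices, cf.\ Theorem~\ref{cor:infinite}); semiconnectedness survives the merge since finite cuts of $G$ correspond to finite cuts of the split graph. Your alternative remedy of ``pre-committing'' a bipartition of the edge set at each infinite-degree vertex is precisely the delicate point you leave unverified, and as stated it is not clear it preserves either the connectivity of the finite truncations or the cut condition; without it (or the paper's splitting trick) the theorem is only proved for locally finite $G$.
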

As the proof of both of these theorems is virtually the same, we only spell out a proof of the first.
\begin{proof}[Proof of Theorem~\ref{thm:semiconn}]
Without loss of generality we may assume that $G$ is locally finite.
Indeed, if $v\in V(G)$ is a vertex of infinite degree with neighbours $x_1, x_2,\dotsc $, we may replace it with a path of double edges $v_0v_1v_2 
\dotsc$ where $v_i$ is connected to every vertex in $\{x_{(c+1)i+k} \colon k \in [c+1]\}$.
Applying this reduction to every vertex of infinite degree we obtain a locally finite countable graph $H$.
If $H$ has a $c$-balanced decomposition $T_1\sqcup T_2$, every vertex $v_i$ must have at least one edge of both $T_1$ and $T_2$ that is not an edge of the path.
Hence we may reconstruct a balanced decomposition for $G$ by merging the double paths we created,
as degrees with infinite degree have infinite degree in both trees after merging. 

Let $V= \{v_1, v_2,\dotsc \}$ and $V_i = \{v_1,\dotsc, v_i\}$ for $i\in \N$.
For each $n$ we define $G_n$ to be the graph obtained by contracting each connected component $C$ of $G- V_n$ to a vertex $v_C$, referred to as \emph{auxiliary} vertices of $G_n$.
Each graph $G_n$ is finite as $e(V_n, G-V_n)$ is finite, since $V_n$ is finite and $G$ is locally finite.
Further, each $G_n$ contains a double tree $H_n$ such that $H_n[V_n] = G[V_n]$.
Indeed, let $T_1\sqcup T_2$ be a double tree decomposition for $G$.
Contracting the connected components of $G-V_n$ may create cycles. 
Since $T_1,T_2$ restricted to $V_n$ are both acyclic, each such cycle necessarily contains some $v_C$, for some connected component $C$ of $G-V_n$.
Hence, we may remove edges incident to auxiliary vertices until we obtain a double tree $H_n$. 

By Theorem~\ref{thm:2trees}, each $H_n$ has a $c$-balanced decomposition $T_1^{(n)}\sqcup T_2^{(n)}$.
By a standard compactness argument we may pass to a subsequence $(n_k)_k$ such that for every $k > \ell$ the decompositions agree on $V_{\ell}$, \textit{i.e.} 
\begin{align*}
T_1^{(n_k)}[V_{\ell}] &= T_1^{(n_\ell)}[V_{\ell}],\\
T_2^{(n_k)}[V_{\ell}] &= T_2^{(n_\ell)}[V_{\ell}].
\end{align*}
Take $S_1$ and $S_2$ to be the unions of $(T_1^{(n_k)}[V_{n_k}])_k$ and $(T_2^{(n_k)}[V_{n_k}])_k$, respectively.
Clearly, $S_1$ and $S_2$ are spanning subgraphs of $G$.
Since $G$ is locally finite, for any $v\in V(G)$ there is some $K$ such that for $k\geq K$, we have $\Gamma(v) \subset V_{n_k}$ and thus $\{ e \in E(G) \colon v\in e\} \subset E(T_1^{(n_k)})\sqcup E(T_2^{(n_k)})$.
This implies that $S_1$ and $S_2$ partition the edges of $G$ and since every decomposition $T_1^{(n_k)}\sqcup T_2^{(n_k)}$ is $c$-balanced, we conclude that $S_1 \sqcup S_2$ is $c$-balanced.

It remains to check that $S_1$ and $S_2$ intersect every finite cut of $G$.
Let $(A,B)$ be a finite cut of $G$.
Since $(A,B)$ is finite, there is some $k$ such that $E(A,B)\subset G[V_{n_k}]$.
Let $x\in A\cap V_{n_k}$ and $y\in B\cap V_{n_k}$.
Since $T_1^{n_k}$ is connected and contains $V_{n_k}$, it contains a path $P$ from $x$ to $y$. 
We claim that $P\cap E(A,B)\neq \emptyset$, finishing the proof.
Indeed, the path $P$ may be extended to a path $P'$ between $x$ and $y$ in $G$ such that $P'$ and $P$ coincide on $G[V_{n_k}]$, and whose only additional edges have endpoints outside of $V_{n_k}$.
Since $(A,B)$ is a cut of $G$, $P'\cap E(A,B)\neq \emptyset$.
But $E(A,B) \subset E(G[V_{n_k}])$ so $P\cap E(A,B)\neq \emptyset$.
Hence, $S_1$ is semiconnected.
Similarly, $S_2$ is semiconnected.
\end{proof}
This compactness argument can easily be modified to yield Theorem~\ref{thm:semiconngeneral} 
by applying Theorem~\ref{thm:2treesgeneral} instead of Theorem~\ref{thm:2trees} in the proof.

\section{Digraphs}\label{sec:digraphs}

Arborescences are the natural analogue for trees in digraphs and so H\"{o}rsch's Question~\ref{qu:digraphs} asks whether the digraph analogue of Theorem~\ref{thm:horsch} holds. A natural analogue of connectedness for digraphs is strong connectedness. The following question is then the digraph analogue of Theorem~\ref{thm:2treesgeneral}:
does any union of two strongly connected digraphs allow a balanced decomposition into two strongly connected digraphs?
We answer both this and Question~\ref{qu:digraphs} in the negative. In fact, our counterexamples have unique decompositions and these decompositions are not balanced.
\subsection{Arborescences}\label{sec:arbor}
In this subsection we answer Question \ref{qu:digraphs} in the negative.
More precisely, we show the following.
\begin{theorem}\label{conj:karb}
Let $k \geq 2$.
For every $c>0$, there is a $k$-arborescence $D$,
such that no \mbox{$k$-arborescence} decomposition $D= A_1\sqcup \dotsb \sqcup A_k$ is $c$-balanced,
meaning that for all distinct $i, j$, there is some vertex $v$ with 
\begin{equation*}
    |d^{\textup{out}}_{A_i} - d^{\textup{out}}_{A_j}| \leq c.
\end{equation*}
\end{theorem}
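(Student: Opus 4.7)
The plan is to give an explicit family of counterexamples, parametrised by $n$: for each $c > 0$ I will exhibit a $k$-arborescence $D$ on $n = c+2$ vertices whose essentially unique $k$-arborescence decomposition has imbalance $n-1$ at a distinguished vertex. Specifically, take $V(D) = \{v, r, u_1, \ldots, u_{n-2}\}$ and declare the arcs of $D$ to be $k-1$ parallel copies each of $v \to r$ and of $v \to u_i$ for every $i \in \{1, \ldots, n-2\}$, together with single arcs $r \to u_1$, $u_i \to u_{i+1}$ for $i = 1, \ldots, n-3$, and $u_{n-2} \to v$. Picture $D$ as a directed Hamilton cycle $r \to u_1 \to \cdots \to u_{n-2} \to v \to r$ with $v \to r$ thickened to multiplicity $k-1$ and with chord arcs $v \to u_i$ each of multiplicity $k-1$.

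First I would verify that $D$ is a $k$-arborescence by exhibiting one decomposition: the Hamilton path $r \to u_1 \to \cdots \to u_{n-2} \to v$ forms an arborescence rooted at $r$, and the remaining arcs split into $k-1$ identical spanning stars rooted at $v$ (each using one copy of each $v \to r$ and each $v \to u_i$). The crucial step is then showing this decomposition is essentially the only one. The in-degrees of $D$ satisfy $d^{\textup{in}}(v) = 1$, $d^{\textup{in}}(r) = k-1$ and $d^{\textup{in}}(u_i) = k$, and in any $k$-arborescence decomposition a vertex $w$ is the root of exactly $k - d^{\textup{in}}(w)$ of the arborescences. So $v$ is a root $k-1$ times and $r$ is a root once; call the arborescence rooted at $r$ simply $A$. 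Since $u_{n-2} \to v$ is $v$'s unique in-arc, it must lie in $A$. A backwards induction then forces the structure of $A$: the parent of $u_{n-2}$ in $A$ cannot be $v$ (otherwise $A$ contains the $2$-cycle $v \to u_{n-2} \to v$), so it is $u_{n-3}$; the parent of $u_{n-3}$ cannot be $v$ (otherwise the cycle $v \to u_{n-3} \to u_{n-2} \to v$); and continuing down to $u_1$, whose parent cannot be $v$ lest we close the whole cycle, one pins $A$ down to the Hamilton path exactly. The remaining $(k-1)(n-1)$ arcs then admit only the partition into $k-1$ copies of the star at $v$.

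Once the decomposition is pinned down, one reads off the out-degree of $v$: it is $0$ in $A$ (a leaf of the path) and $n-1$ in each star arborescence. The imbalance at $v$ between $A$ and any star is therefore $n-1 = c+1 > c$, so $D$ admits no $c$-balanced $k$-arborescence decomposition. The main obstacle is executing the backwards cycle-avoidance induction cleanly, and checking that no spurious edges can enter $A$ beyond the Hamilton path; once this is done the arc-counting and out-degree computation are immediate, and in particular the construction gives $C \geq |V(D)| - 1$, confirming the bound $C \geq |V(D)| - 2$ claimed in the paper.
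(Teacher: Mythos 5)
Your proof is correct. The in-degree count $d^{\textup{in}}(v)=1$, $d^{\textup{in}}(r)=k-1$, $d^{\textup{in}}(u_i)=k$ does force $v$ to be the root of exactly $k-1$ arborescences and $r$ of exactly one; the backwards cycle-avoidance induction pins the $r$-rooted arborescence down to the Hamilton path, and since all remaining $(k-1)(n-1)$ arcs leave $v$, each of the other $k-1$ arborescences has $d^{\textup{out}}(v)=n-1$ while the path gives $d^{\textup{out}}(v)=0$, so the imbalance $n-1>c$ is unavoidable. This is essentially the same strategy as the paper's: a rigid ``path plus fan'' digraph whose arborescence decomposition is forced, with the fan vertex wildly unbalanced. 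The differences are minor but worth noting. The paper's $k=2$ example roots both arborescences at the same vertex (the fan is reached by a single long arc from the common root), and its uniqueness argument goes through the observation that the complement of that arc contains a unique directed path between the two endpoints; it then generalises to larger $k$ by appending $k-2$ copies of the directed path. You instead root the fans at the fan vertex itself, argue uniqueness via root-counting from in-degrees plus a cycle-avoidance induction, and handle all $k$ uniformly with parallel copies of the star, obtaining imbalance $\lvert V(D)\rvert-1$ rather than $\lvert V(D)\rvert-2$. The paper's variant has the small extra feature that its counterexample works even if one insists the arborescences share a root, but for the theorem as stated both arguments are complete.
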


\begin{proof}
For $k=2$, we construct an example on vertex set $V = \{ v_1,\dotsc, v_n\}$ as in Figure~\ref{fig:arbcounter}.

\begin{figure}[ht]
	\centering
	\begin{tikzpicture}
	[scale=1.2,auto=center,circ/.style={circle,draw,outer sep=1.7, inner sep=1.7, fill=black}, >=stealth]
	\begin{scope}
	\node[circ, label={180:\small $v_1$}] (s) at (0,0) {};
	\node[circ] (v1) at (1,0) {};
	\node[circ] (v2) at (2,0) {};
	\node[circ] (v3) at (3,0) {};
	\node[circ] (v4) at (4,0) {};
	\node[circ] (v5) at (5,0) {};
	\node[circ] (v6) at (6,0) {};
	\node[circ, label={360:\small $v_n$}] (v7) at (7,0) {};
	\end{scope}
	\begin{scope}[every edge/.style={->, very thick,draw=red}]
	\draw (s) edge (v1);
	\draw (v1) edge (v2);
	\draw (v2) edge (v3);
	\draw (v3) edge (v4);
	\draw (v4) edge (v5);
	\draw (v5) edge (v6);
	\draw (v6) edge (v7);
	\end{scope}
	\begin{scope}[every edge/.style={->, very thick,draw=blue}]
	\draw (s) edge[bend left=50] (v7);
    \draw (v7) edge[bend left=50] (v6);
    \draw (v7) edge[bend left=50] (v5);
    \draw (v7) edge[bend left=50] (v4);
    \draw (v7) edge[bend left=50] (v3);
    \draw (v7) edge[bend left=50] (v2);
    \draw (v7) edge[bend left=50] (v1);
	\end{scope}
	\end{tikzpicture}
	\caption{Construction in Theorem~\ref{conj:karb} when $k=2$.}
	\label{fig:arbcounter}
\end{figure}
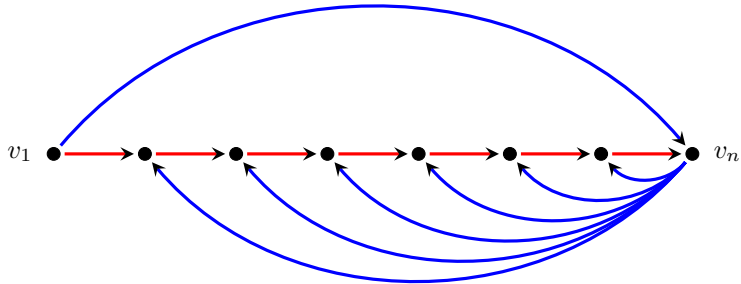

Let $B_1$ be the arborescence in blue and $B_2$ the directed path in red.
We claim that this is the unique double arborescence decomposition of the resulting digraph $D_n$, up to reordering, thus proving the result as 
\begin{equation*}
d_{B_1}^{\textup{out}}(v_n) - d_{B_2}^{\textup{out}}(v_n) = n-2.
\end{equation*}
Indeed, let $D= C_1\sqcup C_2$ be an arbitrary double arborescence decomposition of $D$.
Without loss of generality, $ \vv{v_1v_n}\in A(C_1)$.
Since $C_2$ is connected, it contains a directed path from $v_1$ to $v_n$. 
But the only such path that does not use the arc $\vv{v_1v_n}$ is the path $B_2$.
Hence, $C_2= B_2$ and $C_1 = B_1$, as claimed.

This example can easily be generalised to show Theorem~\ref{conj:karb} for general $k$, for example by adding $k-2$ copies of the directed path $B_2$.
\end{proof}

\subsection{Strongly connected digraphs}\label{sec:strongcon}
We now give the counterexample for the second question mentioned above.
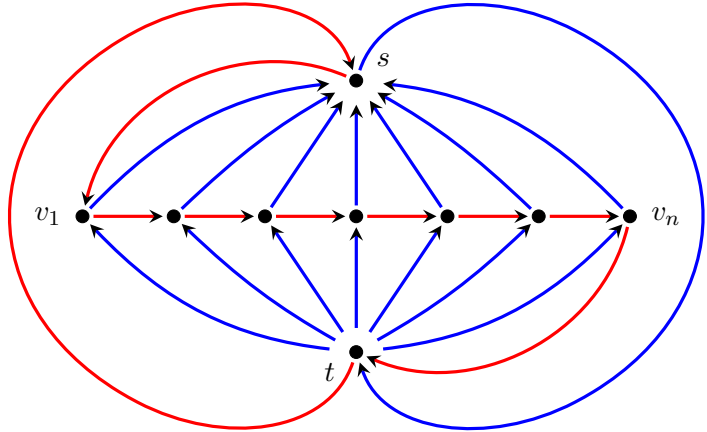
\begin{figure}[ht]
	\centering
	\begin{tikzpicture}
	[scale=1.2,auto=center,circ/.style={circle,draw,outer sep=1.7, inner sep=1.7, fill=black}, >=stealth]
	\node[circ, label={25:$s$}] (s) at (4,1.5) {};
	\node[circ, label={180:$v_1$}] (v1) at (1,0) {};
	\node[circ] (v2) at (2,0) {};
	\node[circ] (v3) at (3,0) {};
	\node[circ] (v4) at (4,0) {};
	\node[circ] (v5) at (5,0) {};
	\node[circ] (v6) at (6,0) {};
	\node[circ, label={360:$v_n$}] (v7) at (7,0) {};
	\node[circ, label={183:$t$}] (t) at (4,-1.5) {};
	\begin{scope}[circ/.style={circle,draw=white,outer sep=1.7, inner sep=1.7, fill=none}]
	\node[circ] (c1) at ( 4.3, -2.4) {};
	\node[circ] (c2) at ( 08.2, -0.95) {};
	\node[circ] (c3) at ( 5.45, 2.55) {};
	\node[circ] (d1) at ( 5.45, -2.55) {};
	\node[circ] (d2) at (08.2,  0.95) {};
	\node[circ] (d3) at ( 4.3,  2.4) {};
	\node[circ] (q1) at ( 3.7, -2.4)    {};
	\node[circ] (q2) at (-00.2, -0.95)  {};
	\node[circ] (q3) at ( 2.55, 2.55)   {};
	\node[circ] (w1) at ( 2.55, -2.55)  {};
	\node[circ] (w2) at (-00.2,  0.95)  {};
	\node[circ] (w3) at ( 3.7,  2.4)    {};
	\end{scope}
	\begin{scope}[every edge/.style={->, very thick,draw=red}]
	\draw (s) edge[bend right=50] (v1);
	\draw (v1) edge (v2);
	\draw (v2) edge (v3);
	\draw (v3) edge (v4);
	\draw (v4) edge (v5);
	\draw (v5) edge (v6);
	\draw (v6) edge (v7);
	\draw (v7) edge[bend left=50] (t);
	\draw[->, very thick,draw=red, rounded corners] 
	  (t) .. controls (q1) and (w1) .. (1.50,-2)
          .. controls (q2) and (w2) .. (1.50,2)
          .. controls (q3) and (w3) .. (s);
	\end{scope}
	\begin{scope}[every edge/.style={<-, very thick,draw=blue}]
	\draw (s) edge[shorten <=06pt,bend left=20] (v7);
    \draw (s) edge[shorten <=05pt,bend left=7] (v6);
    \draw (s) edge[shorten <=05pt] (v5);
    \draw (s) edge[shorten <=05pt] (v4);
    \draw (s) edge[shorten <=05pt] (v3);
    \draw (s) edge[shorten <=05pt, bend right=7] (v2);
    \draw (s) edge[shorten <=06pt, bend right=20] (v1);
    \draw (v1) edge[shorten >=06pt, bend right=20] (t);
    \draw (v2) edge[shorten >=05pt, bend right=7] (t);
    \draw (v3) edge[shorten >=05pt] (t);
    \draw (v4) edge[shorten >=05pt] (t);
    \draw (v5) edge[shorten >=05pt] (t);
    \draw (v6) edge[shorten >=05pt,bend left=7] (t);
    \draw (v7) edge[shorten >=06pt,bend left=20] (t);
	\draw[<-, very thick,draw=blue, rounded corners] 
	  (t) .. controls (c1) and (d1) .. (6.50,-2)
          .. controls (c2) and (d2) .. (6.50,2)
          
          .. controls (c3) and (d3) .. (s);
	\end{scope}
	\end{tikzpicture}
	\caption{Construction in Theorem~\ref{conj:kstrongdigraphs} when $k=2$.}
	\label{fig:strongconn}
\end{figure}
\begin{theorem}\label{conj:kstrongdigraphs} 
Let $k\geq 2$.
For any $c>0$, there is a digraph $D= (V,A)$ that is the union of $k$ strongly connected digraphs,
such that in any decomposition $D= S_1\sqcup \dotsb \sqcup S_k$ into strongly connected digraphs,
there is a vertex $v$ and some $i,j$ with
\begin{equation*}
|d^{\textup{out}}_{S_i}(v)- d^{\textup{out}}_{S_j}(v)| > c.
\end{equation*}
\end{theorem}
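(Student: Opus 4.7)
The plan is to exhibit the digraph $D$ in Figure~\ref{fig:strongconn} with parameter $n > c+1$ and prove that, up to swapping labels, its only decomposition into two strongly connected spanning subgraphs is the displayed red/blue one, which then delivers an out-degree gap of exactly $n-1$ at the vertex $t$. First I would verify that $D$ is indeed the union of two strongly connected subgraphs: the red arcs form the Hamilton cycle $s \to v_1 \to \dotsb \to v_n \to t \to s$, and the blue arcs make a strongly connected spanning subgraph as well, since one can travel $s \to t$ directly, $t \to v_i$ directly for every $i$, and $v_i \to s$ directly for every $i$.

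The main step, and the principal obstacle, is establishing uniqueness of the decomposition. The vertex $s$ has exactly two out-arcs in $D$ (namely $s \to v_1$ and $s \to t$), and each of the two strongly connected parts must contain at least one, so each contains exactly one; say $s \to v_1 \in S_1$ and $s \to t \in S_2$. A direct inspection shows that the only simple directed paths from $s$ to $t$ in $D$ are the arc $s \to t$ itself and the long path $s \to v_1 \to v_2 \to \dotsb \to v_n \to t$: once we leave $s$ via $v_1$, the only way to avoid revisiting $s$ at each step is to take $v_i \to v_{i+1}$, and from $v_n$ only $v_n \to t$ avoids $s$. Since $S_1$ is strongly connected and lacks the direct arc, every arc of the long path belongs to $S_1$.

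With the long path pinned down in $S_1$, the remaining bookkeeping is routine. Writing $v_0 := s$, each $v_i$ has only the two in-arcs $v_{i-1} \to v_i$ and $t \to v_i$ in $D$; the former lies in $S_1$, so $t \to v_i \in S_2$ for every $i$. Since $S_1$ must have an out-arc at $t$ but has forfeited every $t \to v_i$, the unique remaining option $t \to s$ lies in $S_1$. Thus $d^{\textup{out}}_{S_1}(t) = 1$ while $d^{\textup{out}}_{S_2}(t) = n$, giving the required gap $n - 1 > c$. For $k \geq 3$, I would keep one copy of the blue structure and replace the red cycle by $k-1$ parallel copies of itself, working in a multi-digraph. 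The same argument then distributes the $k$ out-arcs of $s$ one per part; the unique part $S^\star$ containing $s \to t$ absorbs every $t \to v_i$ because the other $k-1$ parts consume all parallel copies of each $v_{i-1} \to v_i$ in forming their $s$-to-$t$ paths, yielding $d^{\textup{out}}_{S^\star}(t) \geq n$ against out-degree $1$ at $t$ for every other part, so some pair of parts is imbalanced by at least $n-1 > c$.
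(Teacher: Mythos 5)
Your proposal is correct and takes essentially the same route as the paper: the same digraph of Figure~\ref{fig:strongconn}, with the key observation that the only simple $s$--$t$ path avoiding the arc $s\to t$ is the long path $s\to v_1\to\dotsb\to v_n\to t$, which forces the placement of all out-arcs at $t$ and yields the gap $n-1>c$ there. The only (harmless) difference is cosmetic: for $k\geq 3$ you duplicate the red Hamiltonian cycle $k-1$ times, whereas the paper adds $k-2$ copies of the blue subgraph $S_1$; both counting arguments go through.
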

\begin{proof}
For $k=2$ we construct a family $(D_n)$ of examples.
The digraph $D_n$ has vertex set $V = \{ s,t, v_1,\dotsc, v_n\}$ as in Figure~\ref{fig:strongconn}.
Let $S_1$ and $S_2$ be the digraphs in blue and red, respectively.
It is sufficient to show that this is the unique decomposition of $D_n$ into strongly connected digraphs, as
\begin{equation*}
    |d^{\textup{out}}_{S_1}(t) - d^{\textup{out}}_{S_2}(t)| = n-1.
\end{equation*}

Let $D = R_1 \sqcup R_2$ be a decomposition of $D$ into strongly connected digraphs.
Without loss of generality, $\vv{st}\in A(R_1)$.
Since $R_2$ is strongly connected, there is a path from $s$ to $t$ in $R_2$.
The only such path that does not use $\vv{st}$ is the path $P$ with arcs $\{\vv{sv_1}, \vv{v_1v_2},\dotsc, \vv{v_{n-1}v_n}, \vv{v_nt}\}$.
Hence, all arcs of $P$ are in $R_2$.
Since $R_1$ is strongly connected, there are paths from $t$ to $v_i$ and from $v_i$ to $s$ in $R_1$, for each $i\in [n]$.
The only such arcs that are not in $P$ are the arcs $\vv{tv_i}$ and $\vv{v_is}$, respectively.
Hence, $R_1 = S_1$ and finally $R_2= S_2$, as claimed.

Similarly as for Theorem~\ref{conj:karb}, these examples can be generalised to arbitrary $k\geq 2$ by adding \(k - 2\) copies of \(S_1\).
\end{proof}

\section{Complexity}\label{sec:NP}

In this section we will show that the decision problem ``Given an Eulerian double tree, does it have a perfectly balanced double tree decomposition?'' is NP-hard, thus answering Question~\ref{conj:algo} in the negative.
We will refer to this problem as PBDT.
We need the following results.
\begin{enumerate}
\item P\'eroche \cite{peroche1984npcompleteness}: the decision problem ``Given a graph with maximum degree $4$, does it contain two edge-disjoint Hamiltonian cycles?'' is NP-complete.\footnote{This problem is referred to as `2-PAR' in the paper of P\'eroche.}
\item Roskind, Tarjan \cite{roskind1985ano}: there is an algorithm which, given a graph $G$, decides in polynomial time whether $G$ is a double tree, and outputs a double tree decomposition if it is.
\end{enumerate}
Note that if a graph contains two edge-disjoint Hamiltonian cycles, then every vertex has degree at least $4$.
So we immediately deduce from the result of P\'eroche that the decision problem ``given a $4$-regular graph, does it contain two edge-disjoint Hamiltonian cycles?'' is NP-complete.
It suffices to reduce this problem to PBDT. Let $\mathcal{A}$ be an algorithm solving PBDT.

Given a $4$-regular graph $G$, fix a vertex $v$ and let its neighbours be $v_1,v_2,v_3,v_4$. We perform the following reductions:
for $i = 1, 2, 3$, let $G_i$ be the graphs obtained by removing $v$, adding vertices $x,y$ and adding edges from $x$ to $v_1$, $v_{i + 1}$ and connecting $y$ to the other two $v_j$.
\begin{figure}[ht] 
	\centering
    \begin{subfigure}[c][70pt][c]{0.4\textwidth}
	\centering
	\begin{tikzpicture}
	[scale=.6,auto=center,circ/.style={circle,draw, minimum size=1pt, inner sep=2.5pt, fill=black}, >=stealth]
    	\node[circ, fill=black, label={\small $v$}] (v) at (0,0) {};
    	\node[circ, fill=white, label={180:\small $v_3$}] (v1) at (-2,-1) {};
    	\node[circ, fill=white, label={180:\small $v_1$}] (v2) at (-2,1) {};
    	\node[circ, fill=white, label={0:\small $v_4$}] (v3) at (2,-1) {};
    	\node[circ, fill=white, label={0:\small $v_2$}] (x) at (2,1) {};
    	\begin{scope}[every edge/.style={very thick,draw=black}]
    	\draw (v1) edge (v);
    	\draw (v2) edge (v);
    	\draw (v) edge (v3);
    	\draw (v) edge (x);
    	\end{scope}
	\end{tikzpicture}
	\caption{Configuration in $G$.}
    \end{subfigure} 
    \begin{subfigure}[c][90pt][c]{0.4\textwidth}
	\centering
	\begin{tikzpicture}
	[scale=.6,auto=center,circ/.style={circle,draw, minimum size=1pt, inner sep=2.5pt, fill=black}, >=stealth]
    	\node[circ, fill=black, label={\small $x$}] (x) at (0,1) {};
    	\node[circ, fill=black, label={\small $y$}] (y) at (0,-1) {};
    	\node[circ, fill=white, label={180:\small $v_1$}] (v1) at (-2,2) {};
    	\node[circ, fill=white, label={0:\small $v_2$}] (v2) at (2,2) {};
    	\node[circ, fill=white, label={180:\small $v_3$}] (v3) at (-2,-2) {};
    	\node[circ, fill=white, label={0:\small $v_4$}] (v4) at (2,-2) {};
    	\begin{scope}[every edge/.style={very thick,draw=black}]
    	\draw (v1) edge (x);
    	\draw (v2) edge (x);
    	\draw (y) edge (v3);
    	\draw (y) edge (v4);
    	\end{scope}
	\end{tikzpicture}
	\caption{Configuration in $G_1$.}
    \end{subfigure} 
\end{figure}

For $i=1,2,3$, run the algorithm of Roskind and Tarjan. 
If it outputs a double tree decomposition for $G_i$, 
run $\mathcal{A}$ on it.
\begin{claim}
The graph $G$ contains two disjoint Hamiltonian cycles if and only if one of $G_1,G_2,G_3$ has a perfectly balanced double tree decomposition.
\end{claim}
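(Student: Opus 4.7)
The plan is to prove both directions by translating between perfectly balanced double tree decompositions of the $G_i$ and pairs of edge-disjoint Hamiltonian cycles of $G$. The key observation is that in each $G_i$ the vertices $x$ and $y$ have degree $2$ while every other vertex has degree $4$ (inherited from $G$). Thus, in any perfectly balanced decomposition $G_i = T_1 \sqcup T_2$, each $T_j$ has degree $1$ at both $x$ and $y$ and degree $2$ everywhere else; since the $T_j$ are spanning trees on $|G|+1$ vertices and both have $|G|$ edges, this degree sequence forces each $T_j$ to be a Hamiltonian path from $x$ to $y$, and the two paths are edge-disjoint.

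For the forward direction, starting from such a decomposition $T_1 \sqcup T_2$ I would identify $x$ and $y$ back into the original vertex $v$. The four edges $xv_1, xv_{i+1}, yv_j, yv_k$ become exactly the four edges incident to $v$ in $G$, so this operation recovers $G$; and each Hamiltonian $x$--$y$ path in $G_i$ becomes a Hamiltonian cycle of $G$ through $v$, using the single edge of the path at $x$ together with the single edge of the path at $y$. The two resulting cycles are edge-disjoint since $T_1$ and $T_2$ were.

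For the reverse direction, let $C_1, C_2$ be edge-disjoint Hamiltonian cycles of $G$. Since $G$ is $4$-regular, $C_1, C_2$ together use all four edges at $v$; say $C_1$ uses some pair $\{vv_a, vv_b\}$ and $C_2$ uses the complementary pair. Among the three partitions of $\{v_1, v_2, v_3, v_4\}$ into two pairs (corresponding to $G_1, G_2, G_3$), exactly one has $\{v_a, v_b\}$ as a block, while the other two separate $v_a$ from $v_b$. I would choose an index $i$ for which the partition of $G_i$ separates $\{v_a, v_b\}$ (two of the three indices work): then in $G_i$, one of $v_a, v_b$ is a neighbour of $x$ and the other of $y$, and likewise for the complementary pair. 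Re-routing $C_1$ at $v$ by sending one of its two incident edges through $x$ and the other through $y$ converts $C_1$ into a Hamiltonian $x$--$y$ path of $G_i$; applying the same construction to $C_2$ yields a second such path, edge-disjoint from the first, giving a perfectly balanced double tree decomposition of $G_i$.

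The only subtle point is the choice of $i$. For the ``bad'' index where $\{v_a, v_b\}$ is a block of the partition, both of $C_1$'s edges at $v$ would be routed to $x$, producing a Hamiltonian cycle through $x$ in $G_i - y$ rather than an $x$--$y$ path, and no Hamiltonian $x$--$y$ path of $G_i$ containing those edges exists. The counting observation that two of the three partitions separate any fixed pair guarantees that at least one (in fact two) of $G_1, G_2, G_3$ admits a perfectly balanced double tree decomposition whenever $G$ has two edge-disjoint Hamiltonian cycles, so the reduction that queries $\mathcal{A}$ on all three of $G_1, G_2, G_3$ correctly decides the two-edge-disjoint-Hamiltonian-cycles problem on $4$-regular graphs.
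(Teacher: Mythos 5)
Your proof is correct and follows essentially the same route as the paper: perfectly balanced double tree decompositions of $G_i$ are exactly pairs of edge-disjoint Hamiltonian $x$--$y$ paths, which correspond to edge-disjoint Hamiltonian cycles of $G$ by merging/splitting $v$. Your treatment of the reverse direction is in fact more explicit than the paper's one-line statement, since you spell out why at least one (indeed two) of the three splittings separates the two edges of each cycle at $v$.
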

\begin{proof}
Note that for $i=1,2,3$, if $G_i$ has a perfectly balanced decomposition $T_1\sqcup T_2$ then $T_1$, $T_2$ are two edge-disjoint Hamiltonian paths with endpoints $x$ and $y$.
Indeed, every $v\in V(G_i)\setminus \{x,y\}$ must have degree $2$ in each tree and $x,y$ must be leaves. 

Therefore, a perfectly balanced double tree decomposition in $G_i$ corresponds to two edge-disjoint Hamiltonian cycles in $G$ by merging vertices $x$ and $y$. 
Conversely, two edge-disjoint Hamiltonian cycles in $G$ yield a perfectly balanced decomposition in one of the three splittings of $v$ into $x$ and $y$ described above.
\end{proof}

Hence, the above algorithm is a valid polynomial time reduction of finding two edge-disjoint Hamiltonian cycles in a $4$-regular graph to PBDT.

\section{Conclusion}\label{sec:conc}
We have shown that every double tree has a partition into two trees such that the degrees at each vertex differ by at most four (improving on H\"{o}rsch's~\cite{horsch2021globally} bound of five). Can this be further improved? There are examples of double trees that admit a $2$-balanced double tree decomposition, but no $1$-balanced double tree decomposition.
The only such examples known to the authors involve taking an odd cycle, 
whose edges cannot be colored blue/red without creating a vertex with degree difference $2$, 
and making it into a double tree while preserving degree differences.
See below for example.
In any double tree decomposition, one of the vertices of the triangle has degree difference $2$.
\begin{figure}[ht] 
    \centering
	\begin{tikzpicture}
	[scale=.6,auto=center,circ/.style={circle,draw, minimum size=1pt, inner sep=2.5pt, fill=black}, >=stealth]
    	\node[circ] (v) at (0,0) {};
    	\node[circ] (s) at (1,-1) {};
    	\node[circ] (u) at (-1,-1) {};
    	\node[circ] (u1) at (-1,-2.5) {};
    	\node[circ] (s1) at (1,-2.5) {};
    	\begin{scope}[every edge/.style={very thick,draw=black}]
    	\draw (v) edge (s);
    	\draw (v) edge (u);
    	\draw (s1) edge[bend left=15] (s);
    	\draw (s1) edge[bend right=15] (s);
    	\draw (u) edge (s);
    	\draw (u) edge[bend right=15] (u1);
    	\draw (u) edge[bend left=15] (u1);
    	\draw (u1) edge (s1);
    	\end{scope}
	\end{tikzpicture}
\end{figure}

It seems natural to conjecture that this lower bound is tight.
\begin{conjecture}
Any double tree has a $2$-balanced double tree decomposition.
\end{conjecture}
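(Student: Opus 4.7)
The plan is to attack the conjecture with the same minimum-counterexample and discharging framework as in Section 2, but taking \(c = 2\). Under this threshold, a vertex is small if \(d_G(v) \leq 4\) and big otherwise; the critical vertices are precisely the \(5\)-vertices. A key fortunate fact is that every small vertex is automatically \(2\)-balanced: in any double tree decomposition, each tree-degree at a \(k\)-vertex is at least \(1\), so a vertex of degree \(k \leq 4\) has tree-degree difference at most \(k - 2 \leq 2\). This means the entire reduction machinery of Sections 2.1--2.3 can in principle be preserved; only the discharging and a few refined structural statements need to be rebuilt.

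First I would re-derive the structural lemmas at the new threshold. Lemma 2.1 transfers essentially unchanged: a \(2\)-vertex must have two distinct big neighbours, each of odd degree (\(\equiv c + 1 \pmod 2\)). The edge-swap lemma and the \(3\)-vertex classification (rich/poor/bad) survive verbatim, and Lemma 2.4 goes through because its proof uses \(c\) only via the fact that \(3\)-vertices are leaves in some tree. The first genuinely new work is to adapt Lemma 2.6 to critical \(5\)-vertices: I would need to show that a \(5\)-vertex with a \(2\)-vertex neighbour cannot also have a poor \(3\)-vertex neighbour, that it has at most one poor \(3\)-vertex neighbour, and the bad-\(3\)-vertex analogue. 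A new type of statement may also be needed for \(4\)-vertices, which are small (and hence automatically balanced) but whose presence in the neighbourhood of a big vertex affects the charge budget.

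Next comes the discharging. Keep the initial charge \(f(v) = d_G(v)\), so \(\sum_v f(v) = 2e(G) = 4n - 4\), and aim for \(g(v) \geq 4\) after discharging. A big vertex sends charge \(1\) to each \(2\)-vertex neighbour, \(1/2\) to each poor or bad \(3\)-vertex neighbour (doubled through the double edge for bad ones, giving total \(1\)), and \(1/3\) to each rich \(3\)-vertex neighbour. The verification is immediate for vertices of degree \(\geq 7\), and \(4\)-vertices keep their charge \(4\) since they neither send nor receive. The tight cases are degree \(5\) (critical) and \(6\), where the slack is at most \(1\) or \(2\) respectively and the structural lemmas above must be strong enough to restrict how many small neighbours can appear.

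The main obstacle is exactly the analysis of critical \(5\)-vertices. A \(5\)-vertex has only one unit of charge to give away, so it can be adjacent to at most one \(2\)-vertex, at most two poor \(3\)-vertices, etc.; ruling out the forbidden configurations will require a sequence of reduction--reconstruction--edge-swap arguments analogous to (but significantly more intricate than) the proofs of Lemma 2.6.(ii) and 2.6.(iii), with considerably less slack to absorb the swaps. A secondary obstacle is that for \(c = 2\) the odd-cycle obstruction noted by the authors becomes genuinely binding: the lower-bound constructions all rely on an odd cycle that cannot be \(1\)-balanced, and any tight local argument must be sensitive to this. If local reductions do not suffice, a promising alternative is a more global strategy---viewing the \(\pm 1\) colour-imbalances as a \(\mathbb{Z}/2\)-flow or \(T\)-join in the double tree and exploiting its structure to realise a balanced decomposition. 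I suspect an intermediate bound of \(c = 3\), where critical vertices are \(5\)-vertices and the small set is \(\{2,3,4,5\}\)-vertices with at most one unit of budget slack, would be a natural and technically easier stepping stone.
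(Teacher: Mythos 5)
This statement is not proved in the paper at all: it is stated as an open conjecture in Section~\ref{sec:conc}, so there is no proof of the paper's to compare yours against, and your text should be judged as a standalone argument. Read that way, it is a research plan rather than a proof: every step that is easy (transferring Lemma~\ref{lem:2verts}, the edge-swap lemma, Lemma~\ref{fact:poor3}, Lemma~\ref{lem:badvertices}, which are indeed proved in the paper for general $c\geq 2$) is carried over, and every step that is hard is deferred with phrases like ``will require a sequence of reduction--reconstruction--edge-swap arguments \dots{} significantly more intricate''. The conjecture is exactly as open after your outline as before it.

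The gap is concrete and quantitative. With $c=2$ the big vertices are those of degree $\geq 5$, and if you keep the initial charge $f(v)=d(v)$ with target $g(v)\geq 4$, a big vertex of degree $5$ has only one unit of slack and one of degree $6$ only two. But the structural lemmas available (or plausibly adaptable) do not come close to limiting the outflow to that budget: a $5$-vertex all of whose neighbours are rich $3$-vertices would send $5\cdot\tfrac13=\tfrac53>1$, giving $g(v)=\tfrac{10}{3}<4$; a $5$-vertex with one poor $3$-vertex and four rich $3$-vertices sends $\tfrac12+\tfrac43>1$; and Lemma~\ref{lem:7verts} only constrains \emph{critical} vertices, so a $6$-vertex adjacent to a $2$-vertex and several poor $3$-vertices (outflow $\geq 1+\tfrac{k}{2}$) is not excluded by anything in the paper. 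None of these configurations is touched by the reduction machinery of Sections~\ref{subsec:2verts}--\ref{subsec:crit}, and it is not clear they can be excluded at all -- indeed, if they could be handled by local reductions of this type, the authors' own framework would already have yielded a bound better than $4$. So the heart of the matter -- new structural lemmas strong enough to rescue the discharging at degrees $5$ and $6$, or a genuinely different (e.g.\ global) argument -- is precisely what is missing, and merely naming the possibility of a $T$-join or flow reformulation does not supply it. Your suggestion of first attempting $c=3$ is sensible, but it too is left entirely unexecuted.
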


The question of balancing double trees can naturally be generalised to balancing $k$-trees, as well as graphs containing $k$ edge-disjoint trees.
\begin{question}\label{qu:general}
Let $k\geq 2$.
What are the smallest constants $c_k, d_k>0$ such that the following hold?
\begin{itemize}
\item Any finite graph which is the union of $k$ edge-disjoint spanning trees has a \mbox{$c_k$-balanced} $k$-tree decomposition.
\item Any finite graph  
containing $k$ edge-disjoint spanning trees has a $d_k$-balanced decomposition into connected spanning subgraphs. 
\end{itemize}
\end{question}
By repeatedly applying Theorem~\ref{thm:horsch}, H\"orsch~\cite{horsch2021globally} obtained the bound \(c_k \leq 16\log k\). 
We could similarly derive improved bounds on \(c_k\) and \(d_k\) by repeatedly applying Theorem~\ref{thm:2treesgeneral}. 
When the requirement that each graph in the decomposition is a tree is dropped (so any \(k\)-edge colouring of the original graph \(G\) is allowed), a uniform bound on the colour-degree differences is attainable. 
Indeed, let \(\mathcal{H}\) be the hypergraph whose vertices are the edges of \(G\) and whose hyperedges are the stars centred at each vertex of \(G\). 
Then \(\mathcal{H}\) has maximum degree \(2\) and so bounded discrepancy -- see, for example, the paper of Doerr and Srivastav~\cite[Theorem~3.7]{ds2003}. In particular, upper bounds on $c_k$ are bounds on $d_k$ with a constant error term.
It would be particularly interesting to resolve H\"{o}rsch's conjecture~\cite{horsch2021globally} of whether there is a uniform upper bound on the \(c_k\).

The digraphs used for the proofs of Theorems~\ref{conj:karb} and \ref{conj:kstrongdigraphs} in Section~\ref{sec:digraphs} rely on the uniqueness of the decompositions into arborescences/strongly connected digraphs.
It is natural to ask what happens if our starting digraph is less restricted.
\begin{question}
Are there constants $c,t$ such that if $D$ is a disjoint union of $t$ spanning arborescences sharing a root, then the edges of $D$ can be coloured blue/red such that the out-degrees are $c$-balanced and both graphs contain arborescences?
\end{question}
The same question is also interesting for strongly connected digraphs. The hypothesis that $D$ is a disjoint union of $t$ strongly connected spanning digraphs is slightly cumbersome and it would seem natural to replace it with some high connectivity condition. As far as we are aware, the following question is open and would be interesting to resolve.

\begin{question}
For each positive integer \(t\) is there a constant \(k\) such that the edges of any \(k\)-strongly connected digraph can be partitioned into \(t\) parts each of which is spanning and strongly connected?
\end{question}

For undirected graphs the corresponding statement follows from the Tutte-Nash-Williams characterisation with \(k = 4t\).

\section{Acknowledgements}

The authors would like to thank Lex Schrijver for helpful comments.

\bibliographystyle{scott}
\bibliography{treepartitions}

\end{document}